\definecolor{lightblue}{rgb}{0.22,0.45,0.70}
\definecolor{mygray}{rgb}{0.7,0.7,0.7}
\newcommand\cero{\boldsymbol{0}}
\newcommand{\norm}[1]{\left\|#1\right\|}
\newcommand\vdiv{\mathop{\mathrm{div}}\nolimits}
\newcommand\tr{\mathop{\mathrm{tr}}\nolimits}
\newcommand{\bG}{\mathbf{G}}
\newcommand{\bI}{\mathbf{I}}
\newcommand{\Om}{\Omega}
\newcommand{\bu}{\boldsymbol{u}}
\newcommand{\bg}{\boldsymbol{g}}
\newcommand{\bn}{\boldsymbol{n}}
\newcommand{\bt}{\boldsymbol{t}}
\newcommand{\bv}{\boldsymbol{v}}
\newcommand{\bw}{\boldsymbol{w}}
\newcommand{\bx}{\boldsymbol{x}}
\newcommand{\br}{\mathbf{r}}
\newcommand{\ff}{\boldsymbol{f}}
\newcommand{\bnabla}{\boldsymbol{\nabla}}
\newcommand{\bchi}{\boldsymbol{\chi}}
\newcommand{\bPi}{\boldsymbol{\Pi}}
\newcommand{\0}{\boldsymbol{0}}
\newcommand\cA{\mathcal{A}}
\newcommand\cK{\mathcal{K}}
\newcommand\cD{\mathcal{D}}
\newcommand\cT{\mathcal{T}}
\newcommand\cE{\mathcal{E}}
\newcommand\cF{\mathcal{F}}
\newcommand\cM{\mathcal{M}}
\newcommand\cS{\mathcal{S}}
\newcommand\cP{\mathcal{P}}
\newcommand\bP{\mathbf{P}}
\newcommand\bM{\mathbf{M}}
\newcommand\bA{\mathbf{A}}
\newcommand\bB{\mathbf{B}}
\newcommand\bC{\mathbf{C}}
\newcommand\bV{\mathbf{V}}
\newcommand\tV{\widetilde{\bV}}
\newcommand\bbP{\mathbb{P}}
\newcommand\bbM{\mathbb{M}}
\newcommand\bH{\mathbf{H}}
\newcommand\bL{\mathbf{L}}
\newcommand\rL{\mathrm{L}}
\newcommand\rW{\mathrm{W}}
\newcommand\bbX{\mathbb{X}}
\newcommand\bbPi{\mathbb{\Pi}}
\newcommand\rH{\mathrm{H}}
\newcommand\rP{\mathrm{P}}
\newcommand\rQ{\mathrm{Q}}
\newcommand\rR{\mathrm{R}}
\newcommand\rM{\mathrm{M}}
\newcommand\rB{\mathrm{B}}
\newcommand\bsigma{\boldsymbol{\sigma}}
\newcommand\bDelta{\boldsymbol{\Delta}}
\def\XXint#1#2#3{{\setbox0=\hbox{$#1{#2#3}{\int}$ }
		\vcenter{\hbox{$#2#3$ }}\kern-.6\wd0}}
\acrodef{pde}[PDE]{partial differential equation} 
\acrodef{dof}[DOF]{degree of freedom} 
\acrodef{fe}[FE]{finite element} 
\acrodef{vem}[VEM]{virtual element method}
\acrodef{snm}[SNM]{silicon nanopore membrane}
\newtheorem{lemma}{Lemma}[section]
\newtheorem{proposition}{Proposition}[section]
\newtheorem{theorem}{Theorem}[section]
\newtheorem{corollary}{Corollary}[section]
\numberwithin{equation}{section}
\numberwithin{figure}{section}
\numberwithin{table}{section}
\begin{document}
\let\WriteBookmarks\relax
\def\floatpagepagefraction{1}
\def\textpagefraction{.001}
\shorttitle{VEM for Stokes/Biot--Kirchhoff bulk--surface models}
\title[mode = title]{Analysis and virtual element discretisation of a Stokes/Biot--Kirchhoff bulk--surface model}

\shortauthors{Dassi, Khot, Rubiano \& Ruiz-Baier}

\author[1]{Franco Dassi}[orcid=0000-0001-5590-3651]
\ead{franco.dassi@unimib.it}

\author[2,3]{Rekha Khot}[orcid=0009-0005-3230-2649]
\ead{Rekha.Khot@inria.fr}

\author[4]{Andr\'es E. Rubiano}[orcid=0000-0002-5557-4963]
\ead{Andres.RubianoMartinez@monash.edu}\cormark[1]

\author[4,5]{Ricardo Ruiz-Baier}[orcid=0000-0003-3144-5822]
\ead{Ricardo.RuizBaier@monash.edu}

\affiliation[1]{organization={Dipartimento di Matematica e Applicazioni, Università degli studi di Milano Bicocca}, addressline={Via Roberto Cozzi 55}, postcode={20125}, city={Milano}, country={Italy}}
\affiliation[2]{organization={SERENA Project-Team, Centre Inria de Paris}, postcode={F-75647}, city={Paris}, country={France}}
\affiliation[3]{organization={CERMICS, ENPC, Institut Polytechnique de Paris}, postcode={F-77455}, city={Marne-la-Vallée cedex 2}, country={France}}
\affiliation[4]{organization={School of Mathematics, Monash University},     addressline={9 Rainforest Walk}, postcode={3800},  city={Melbourne}, state={Victoria}, country={Australia}}
\affiliation[5]{organization={Universidad Adventista de Chile}, addressline={Casilla 7-D}, city={Chill\'an}, country={Chile}}
\cortext[cor1]{Corresponding author.}

\begin{abstract} 
We analyse a coupled 3D-2D model with a free fluid governed by  Stokes flow in the bulk and a poroelastic plate described by the Biot-Kirchhoff equations on the surface. Assuming the form of a double perturbed saddle-point problem, the unique solvability of the continuous formulation is proved using Fredholm's theory for compact operators and the Babu\v{s}ka--Brezzi approach for saddle-point problems with penalty. We propose a stable  virtual element method, establishing a discrete inf-sup condition under a small mesh assumption through a Fortin interpolant that requires only $\bH^1$-regularity for the Stokes problem. We show the well-posedness of the monolithic discrete formulation and introduce an equivalent fixed-point approach employed at the implementation level. The optimal convergence of the method in the energy norm is proved theoretically and is also confirmed numerically via computational experiments. We demonstrate an application of the model and the proposed scheme in the simulation of immune isolation using encapsulation with silicon nanopore membranes.
\end{abstract}

\begin{keywords} 
Coupled bulk--surface problem \sep Fluid -- plate poroelasticity interface \sep Double saddle-point formulations \sep Virtual element methods 
\MSC[2020] 65N15 \sep 65N99 \sep 74K20 \sep 76D07 
\end{keywords}

\maketitle

\section{Introduction}
Bulk--surface interaction problems, wherein coupled physical phenomena occur both within a three-di\-men\-sional domain and on a lower-dimensional manifold (such as a boundary or an embedded interface), arise naturally in a wide range of geophysical, biomedical, and industrial applications. Typical examples include subsurface fluid transport through fractured porous media \cite{tran2021effect}, nutrient exchange across biological membranes \cite{zhang2025nanoconfined}, and the operation of selective barriers such as semi-permeable filtration membranes \cite{dutt2023ultrathin}. These systems are often characterised by complex multiscale dynamics, where interfacial processes strongly influence bulk behaviour and vice versa.

This work is motivated by the modelling and simulation of an incompressible fluid evolving in a bulk domain and interacting with a poromechanical thin structure, specifically a deformable, porous interface. Such settings are prototypical in the context of immune isolation devices and filtration technologies \cite{Song2016}, where mechanical deformation and fluid exchange through compliant membranes are tightly coupled (see also, e.g., \cite{ak2024integration}). 

The geometrical configuration and the relevant operating regimes in numerous applications suggest an effective model where the interface is treated as a Kirchhoff--Biot poroelastic plate \cite{iliev16}, thereby reducing the complexity of the structural component while retaining its key physical features. In the bulk, one considers Stokes flow under mixed boundary conditions to account for viscous incompressible fluid dynamics. The coupling at the fluid--structure interface governs both momentum and mass transfer, and it requires a careful analytical and numerical treatment.

A growing body of literature has addressed numerical methods and theoretical analysis for general mixed-dimensional and bulk--surface coupled systems. Examples include models for surface-bound receptor dynamics in cellular biology \cite{elliott17}, surface transport coupled with bulk Darcy flow in ecological models \cite{galiano11}, and hybrid methods for fractured media \cite{alboin02,chernyshenko18}. Advanced discretisation strategies such as trace finite elements \cite{gross15} and cut finite elements \cite{hansbo16} have been developed to handle the geometric complexity of lower-dimensional manifolds embedded in higher-dimensional domains. We also mention partitioned methods, such as   \cite{FERNANDEZ2013}, that treat the fluid and solid domains independently at each time step, coupling them through explicit interface conditions. 

More directly related to the current study is the work in \cite{bociu21}, where the authors analyse the coupling between a free fluid and both thin and thick poroelastic layers using an asymptotically consistent multilayer model. Their analysis employs Galerkin discretisation and compactness arguments to establish well-posedness. However, our setting differs significantly: we focus solely on the interaction between a free Stokes fluid and a thin poroelastic plate, without including a thick poroelastic subdomain. The poroelastic interface is modelled using a reduced Kirchhoff--Biot theory, as introduced in \cite{iliev16}, and discretised following recent developments in the \ac{vem} for such structures \cite{khot23}. 

Our primary interest lies in the numerical analysis of this bulk--surface coupled system using a \ac{vem}. For coupled diffusion problems on bulk--surface geometries, \acp{vem} have shown great promise in recent works for 2D--1D and 3D--2D configurations \cite{frittelli21,frittelli23}. In the present paper, we build upon and extend these developments to treat a fluid--structure interaction problem coupling Stokes flow and a Kirchhoff--Biot poroelastic plate. This approach leverages the advantages of divergence-free virtual elements and the ability of \ac{vem} to preserve discrete complex structures for Stokes problem in three dimensions (see \cite{beirao20}), as well as the relatively straightforward construction of conforming virtual elements for fourth-order problems  with fewer \acp{dof} even in two dimensions, especially when compared to, for instance, Argyris finite elements \cite{brenner2008mathematical}.  The coupling across the interface introduces non-trivial challenges in terms of both continuous and discrete stability, which we address through careful analytical constructions.

The principal contributions of this paper can be summarised as follows: a rigorous continuous analysis of the coupled bulk--surface system formulated as a double saddle-point problem, using perturbed saddle-point theory and Fredholm alternative arguments, the design of a compatible \ac{vem} for the coupled Stokes/Kirchhoff--Biot system on general polygonal and polyhedral meshes; the construction and related estimates of a novel Fortin interpolation operator with $\bH^1$-regularity and a commuting diagram property, tailored to enforce an extended discrete inf-sup condition involving the bulk--surface coupling; a detailed derivation of optimal a priori error estimates in the energy norm under a small mesh assumption; an open-source implementation of the method in the \texttt{VEM++} library \cite{dassi2023vem++}, using a splitting scheme that is shown to be equivalent (under suitable assumptions) to the monolithic formulation; and finally, numerical experiments that confirm the theoretical convergence rates, and that demonstrate the effectiveness of the proposed approach in simulating fluid--structure interaction relevant to immune isolation via \ac{snm} devices.

\paragraph{Plan of the paper} The contents of the remainder of this work are organised as follows. The statement of the coupled bulk--surface model, the domain configuration, and the weak formulation of the problem are presented in Section~\ref{sec:eq}. Fredholm theorems together with the abstract theory for perturbed saddle-point problems are the main tools used in Section~\ref{sec:wellp} to show that the continuous problem is well-posed. In Section~\ref{sec:vem}, we state the conforming VE spaces, provide appropriate \acp{dof} and introduce suitable projection maps that comprise the definition of the \ac{vem} formulation. This discrete problem is proven to be well-posed in Section~\ref{sec:wellp-h}, and the analysis of convergence is detailed in Section~\ref{sec:error}. The implementation of the method and the corresponding splitting scheme are explained in Section~\ref{sec:implementation}. Finally, Section~\ref{sec:results} presents representative numerical examples that confirm the rates of convergence specified by the theoretical analysis and the applicability of the model.

\paragraph{Preliminaries} 
Let $\Omega$ be a bounded domain in $\mathbb{R}^3$ with boundary $\partial\Omega$ split disjointly between a smooth sub-boundary $\Gamma$ and a flat surface $\Sigma$ with outward pointing unit normal $\bn_\Sigma$. 
We denote by $\nabla$ the gradient in $\mathbb{R}^3$ and by $\nabla_\Sigma = \cP_\Sigma \nabla$ the tangent gradient on $\Sigma$, 
where $\cP_\Sigma$ denotes the projection of $\mathbb{R}^3$ onto the tangent plane of $\Sigma$, that is $\cP_\Sigma = \bI - \bn_\Sigma \otimes\bn_\Sigma$ (where $\bI$ is the identity), see, e.g., \cite{olshanskii18}. Other differential operators associated with the surface $\Sigma$ will be denoted with the subscript $\Sigma$, such as the divergence $\vdiv_\Sigma = \tr(\nabla_\Sigma)$, Laplacian $\vdiv_\Sigma(\nabla_\Sigma) = \Delta_\Sigma$ (Laplace--Beltrami), Hessian $\nabla_\Sigma^2$, and bi-Laplacian $\Delta^2_\Sigma$.  

We use standard notation (see, e.g., \cite{mclean2000}) and denote, for $s\geq 0$, by $\rH^s(\Omega)$ the usual Hilbertian Sobolev space of scalar functions with domain $\Omega$, and denote by $\bH^s(\Omega)$ their vector counterpart. The norm of $\rH^s(\Omega)$ is denoted $\norm{\cdot}_{s,\Omega}$ and the corresponding semi-norm $|\cdot|_{s,\Omega}$. We also use the convention  $\rH^0(\Omega):=\rL^2(\Omega)$ and write $(\cdot, \cdot)_\Omega$ to denote the inner product in $\rL^2(\Omega)$ (similarly for the vector counterpart).

Throughout the paper, we will use the symbol $\lesssim$ to denote less or equal up to a constant that does not depend on the mesh size. 
\section{Governing equations and weak formulation}\label{sec:eq}
We assume that the domain $\Omega$ is filled with a viscous incompressible fluid whose dynamics is governed by Stokes' equations written in terms of bulk velocity $\bu:\Omega \to \mathbb{R}^3$ and bulk pressure $p:\Omega \to \mathbb{R}$
\[ \rho_f \partial_t \bu - \mu \bDelta \bu + \nabla p = \ff \quad \text{and} \quad \vdiv \bu = 0 \qquad \text{in $\Omega$},\]
where $\rho_f$ is the fluid density, $\mu$ is the fluid viscosity, and $\ff$ is the external body force. The boundary $\Gamma$ is the wall of the container separated into $\Gamma^{\bu}$ and $\Gamma^{\bsigma}$, on which we consider mixed no-slip velocities and zero normal stress, and $\Sigma$ represents a flat poroelastic plate in contact with the fluid. On this surface the dynamics are governed by the Biot (or Biot--Kirchhoff)  equations stated in terms of normal deflections of the plate $w:\Sigma \to \mathbb{R}$ and the first moment of the fluid  pressure head in the interstitial plate $\varphi:\Sigma \to \mathbb{R}$. By $\partial\Sigma$ we denote the border of $\Sigma$ and by $\bn_{\partial\Sigma}$ we denote the unit normal pointing outwards from $\partial\Sigma$ and lying on the tangent plane of $\Sigma$. 

In the form of Biot--Kirchhoff equations considered herein, the first assumption is that the deformations of the solid phase are consistent with the Kirchhoff--Love hypothesis, and so plate filaments that were originally perpendicular to the middle surface remain orthogonal to the  deflected centred surface. Secondly, as in \cite{iliev16}, we suppose that the apparent fluid pressure in that system has the physical meaning of the first moment of the pressure across the thickness of the plate, and that the filtration in the poroelastic plate (through Darcy's law) occurs in the tangent plane to $\Sigma$. This is different from the works \cite{bociu21,gurvich22,marciniak15} where the filtration occurs predominantly in the normal direction and then a 2.5D type of model is required for the poroelastic plate and the pressure moment is not used as an unknown. 
Bearing in mind these considerations, we are left with the following set of equations for the plate  (see, e.g., \cite{iliev16,khot23}) 
\[\rho_p\partial_{tt} w + D \Delta^2_\Sigma w + \alpha\Delta_\Sigma \varphi  = m -\bsigma\bn_\Sigma \cdot \bn_\Sigma\quad \text{and}\quad 
\partial_t c_0 \varphi - {\alpha} \Delta_\Sigma \partial_t w - \kappa\Delta_\Sigma \varphi  = g  \qquad \text{in $\Sigma$},\]
where $\rho_p>0$ is the inertial parameter (plate density), $D>0$ denotes the elastic stiffness (flexural rigidity) of the plate, $\alpha>0$ is the rescaled Biot--Willis coefficient, $c_0\geq 0$ is the storativity of the fluid-solid matrix (the net compressibility of constituents), the term $c_0\varphi - \alpha \Delta_\Sigma w$ is the total amount of fluid in the plate, $\kappa>0$ is the plate permeability rescaled with fluid viscosity, $m:\Sigma \to \mathbb{R}$ is the distributed load on the plate, and $g:\Sigma \to \mathbb{R}$ is a source/sink of fluid. Note that on the right-hand side of the deflection equation the force balance also has a contribution coming from the normal stress of the fluid.

As the present work focuses on the spatial discretisation using \acp{vem}, we discard the time dependence of the problem by applying a semi-discretisation in time with constant time step $\tau>0$. Making abuse of notation  
regarding the load and source terms (which will now contain contributions from the solutions at the previous time steps), 
in summary, we have the following bulk--surface coupled system 
\begin{subequations}\label{eq:coupled}
\begin{align}
\frac{\rho_f}{\tau}\bu - \mu \bDelta \bu + \nabla p & =  \ff & \quad \text{in $\Omega$}, \label{eq:stokes1}\\
\vdiv\bu & =0  & \quad \text{in $\Omega$}, \label{eq:stokes2}\\
\frac{c_0}{\tau} \varphi - \frac{\alpha}{\tau} \Delta_\Sigma w - \kappa\Delta_\Sigma \varphi & = g  & \quad \text{in $\Sigma$}, \label{eq:plate1}\\
\frac{\rho_p}{\tau^2} w + D \Delta^2_\Sigma w + \alpha\Delta_\Sigma \varphi & = m -\bsigma\bn_\Sigma \cdot \bn_\Sigma & \quad \text{in $\Sigma$},\label{eq:plate2}
\end{align}
equipped with the following boundary conditions 
\begin{align}
 \bu & = \cero & \text{on $\Gamma^{\bu}$ },\label{eq:bc1a}\\
 \bsigma\cdot\bn_\Gamma & = \cero & \text{on $\Gamma^{\bsigma}$},\label{eq:bc1b}\\
 \varphi &= 0 & \text{on $\partial\Sigma$,}\label{eq:bc2} \\
 w & = \nabla_\Sigma w \cdot \bn_{\partial\Sigma} = 0 & \text{on $\partial\Sigma$.}\label{eq:bc3}
\end{align}
Since stress can be exerted from the fluid domain onto the poroelastic plate, we also consider the following set of kinematic and dynamic transmission conditions accounting for the continuity of normal velocities and the Beavers--Joseph--Saffman--Jones  interfacial conditions for normal and tangential stress (see, e.g., \cite{showalter05,taffetani21}) 
\begin{align}
 \bu \cdot \bn_\Sigma & =   \frac{1}{\tau} w - \kappa \nabla_\Sigma\varphi\cdot\bn_\Sigma =  \frac{1}{\tau} w & \text{on $\Sigma$},\label{eq:bjs1}\\
 - (\bsigma \bn_\Sigma)\cdot \bn_\Sigma & = \varphi & \text{on $\Sigma$},\label{eq:bjs2}\\
 - (\bsigma\bn_\Sigma) \times \bn_\Sigma & = \gamma (\bu-\frac{1}{\tau}w\bn_{\Sigma})\times\bn_\Sigma =  \gamma \bu\times\bn_\Sigma & \text{on $\Sigma$},\label{eq:bjs3}
\end{align}
\end{subequations}
where $\bsigma := \mu \bnabla \bu - p \bI$ denotes the Cauchy pseudo-stress tensor associated with the bulk fluid domain, $\gamma>0$ is the slip rate coefficient (tangential resistance) rescaled with the fluid viscosity and plate permeability. In \eqref{eq:bjs1} the fluid pressure moment flux vanishes because the plate gradient and $\bn_\Sigma$ are mutually orthogonal, and in \eqref{eq:bjs3} the plate deflections vanish because of the term $\bn_\Sigma \times \bn_\Sigma$, which turn \eqref{eq:bjs3} into the classical Beavers--Joseph--Saffman condition for tangential stress encountered in Stokes--Darcy type of models. We also stress  that, from \eqref{eq:bjs2}, we can recast the right-hand side forcing term on the plate simply as $m - \varphi$.

As we consider entities defined on surfaces, apart from the surface function spaces $\rL^2(\Sigma)$ and $\rH^m(\Sigma)$, we will also use the trace space $\rH^{1/2}(\Sigma)$ and its dual $\rH^{-1/2}(\Sigma)$ as well as their vector-valued counterparts (see, e.g., \cite{elliott17}). We recall that the trace operator from $\rH^1(\Omega)$ to $\rH^{1/2}(\Sigma)$ is bounded and surjective (cf. \cite{grisvard1980boundary}), and  that the space 
$\rH^1(\Sigma)$ is dense in $\rH^{1/2}(\Sigma)$. 

In addition, and in view of the boundary conditions, we define the following functional spaces for bulk velocity, bulk pressure, surface pressure moment, and surface deflection 
\begin{gather*}
 \bH^1_\star(\Omega) := \{ \bv \in \bH^1(\Omega): \bv = \cero \ \text{on $\Gamma^{\bu}$}\}, \qquad \rL^2(\Omega), 
 \\
 \rH^1_0(\Sigma) : = \{ \psi \in \rH^1(\Sigma): \psi = 0 \ \text{on $\partial\Sigma$}\}, \qquad 
  \rH^2_0(\Sigma) : = \{ w \in \rH^2(\Sigma): w = \nabla_\Sigma w \cdot \bn_{\partial\Sigma}  = 0 \ \text{on $\partial\Sigma$}\}, 
\end{gather*}
respectively, where the boundary values are understood in the sense of traces and we adopt the notation of using the same symbol for a function and its trace.

We endow these spaces with their natural norms 
\[ \bv\mapsto \|\bv\|_{1,\Omega}, \quad q\mapsto \|q\|_{0,\Omega},\quad \psi \mapsto \|\psi\|_{1,\Sigma}, \quad \zeta \mapsto \|\zeta\|_{2,\Sigma}, \]
and furthermore we denote the graph norm in $\rL^2(\Omega)\times \rH^1_0(\Sigma)$ as $\|(q,\psi)\|^2:= \|q\|^2_{0,\Omega} + \|\psi\|^2_{1,\Sigma}$. 
 
Note that the boundary conditions for the fluid velocity need to be compatible between the part of $\Gamma$ that meets with $\partial\Sigma$. For the zero fluid pressure moment condition assumed in \eqref{eq:bc2} we cannot have $\Gamma^{\bu}$ meeting $\partial\Sigma$ since on $\Gamma^{\bu}$ we do not prescribe fluid pressure  but rather fluid velocity. Then we need to assume a domain and boundary configuration as depicted in Figure~\ref{fig:sketch}. Should different boundary conditions be considered on $\partial\Sigma$, for example a no-flux condition for plate fluid pressure moment, then we require to exchange the fluid domain sub-boundaries $\Gamma^{\bu} \rightleftarrows \Gamma^{\bsigma}$.

\begin{figure}[t!]
    \centering
    \includegraphics[width=0.5\textwidth]{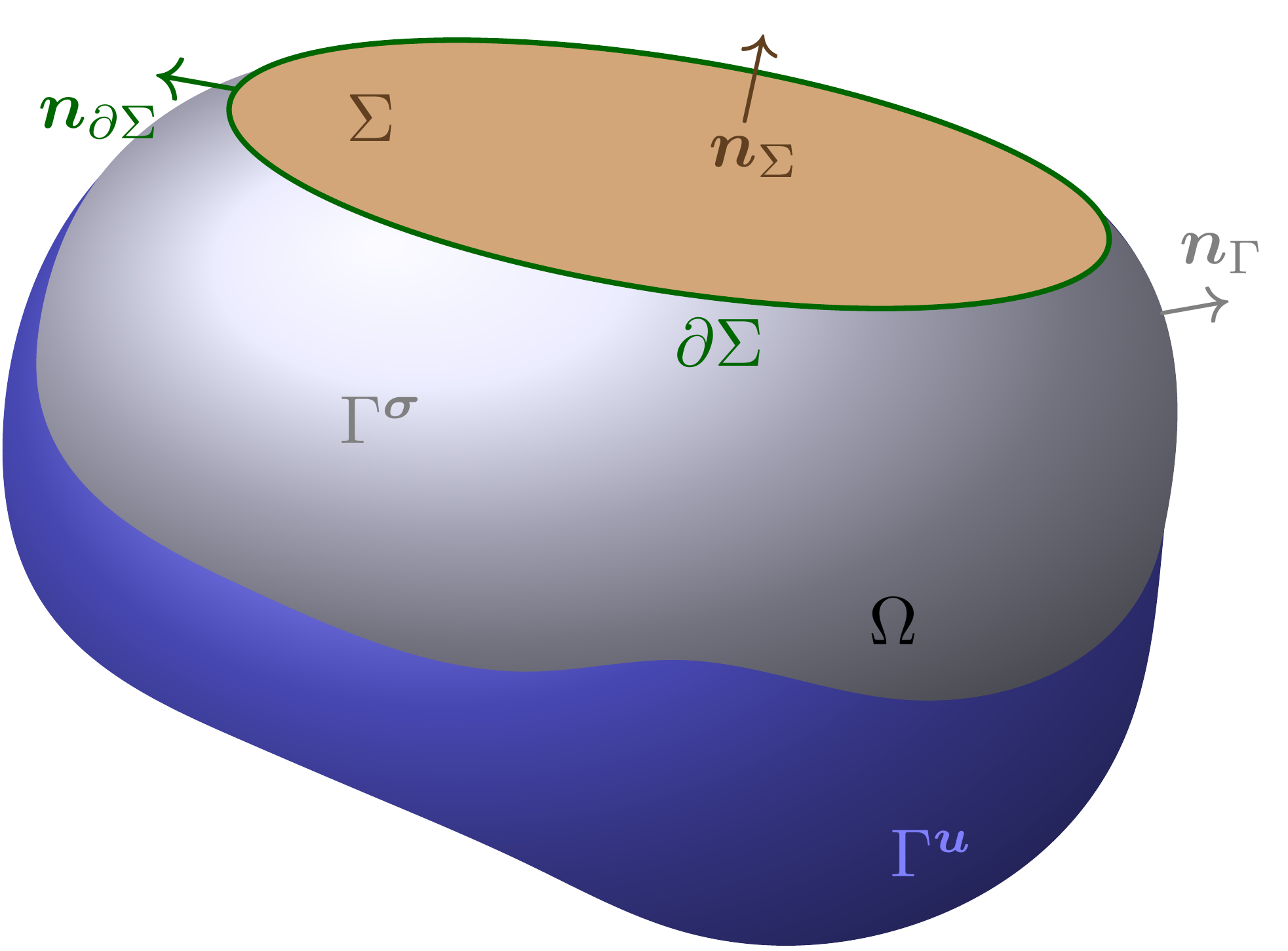}
    \caption{Sketch of the domain and boundary configuration.}
    \label{fig:sketch}
\end{figure}

A weak formulation is derived by testing equation \eqref{eq:stokes1} against $\bv\in \bH^1_\star(\Omega)$, integrating over $\Omega$, applying integration by parts and using the boundary conditions \eqref{eq:bc1a}-\eqref{eq:bc1b};  then testing \eqref{eq:stokes2} against $q\in\rL^2(\Omega)$ and integrating over $\Omega$, then testing the transmission condition \eqref{eq:bjs1} against $\psi \in \rH^1_0(\Sigma)$, testing \eqref{eq:plate1} also against $\psi \in \rH^1_0(\Sigma)$, integrating by parts using \eqref{eq:bc2} and negating that equation; and finally proceeding similarly for \eqref{eq:plate2} testing by $\zeta\in \rH^2_0(\Sigma)$ and rescaling that equation by $\frac{1}{\tau}$. We note that from the momentum balance, the remainder of integration by parts
 can be conveniently rewritten, owing to the splitting $\bsigma\bn_\Sigma = [(\bsigma\bn_\Sigma)\cdot \bn_\Sigma]\bn_\Sigma + [(\bsigma\bn_\Sigma)\times \bn_\Sigma]\times\bn_\Sigma$ and the transmission conditions \eqref{eq:bjs2}-\eqref{eq:bjs3}, as follows 
\[  I_\Sigma = -\int_\Sigma \bsigma \bn_\Sigma \cdot \bv  = \gamma\int_\Sigma (\bu\times\bn_\Sigma)\cdot(\bv\times \bn_\Sigma) + \int_\Sigma \varphi \bv\cdot \bn_\Sigma.\]
Putting all this together, we arrive at:
 Find $(\bu,p,\varphi,w)\in \bH^1_\star(\Omega)\times \rL^2(\Omega)\times \rH^1_0(\Sigma)\times \rH^2_0(\Sigma)$
such that 
\begin{subequations}\label{eq:weak}
\begin{align}
\frac{\rho_f}{\tau}\int_\Omega \bu\cdot \bv + \mu \int_\Omega \bnabla\bu:\bnabla\bv + \gamma \int_\Sigma (\bu\times \bn_\Sigma) \cdot (\bv\times \bn_\Sigma)& \nonumber\\ - \int_\Omega p\,\vdiv\bv + \int_\Sigma \varphi \bv\cdot\bn_\Sigma 
& = \int_\Omega \ff\cdot \bv \quad &&\forall \bv\in \bH^1_\star(\Omega),\label{eq:weak1}\\
- \int_\Omega q\,\vdiv\bu  &= 0 \quad &&\forall q \in \rL^2(\Omega),\label{eq:weak2}\\
\int_\Sigma \psi \bu\cdot\bn_\Sigma - \frac{1}{\tau}\int_\Sigma w\,\psi  &= 0 \quad &&\forall \psi\in \rH^1_{0}(\Sigma),\label{eq:weak3}\\
-\frac{c_0}{\tau}\int_\Sigma \varphi\psi - \frac{\alpha}{\tau}\int_\Sigma \nabla_\Sigma w \cdot \nabla_\Sigma \psi - \kappa\int_\Sigma \nabla_\Sigma\varphi\cdot\nabla_\Sigma\psi  &= -\int_\Sigma g\psi \quad &&\forall \psi \in \rH^1_{0}(\Sigma),\label{eq:weak4}\\
\frac{1}{\tau}\int_\Sigma \varphi \zeta -\frac{\alpha}{\tau} \int_\Sigma \nabla_\Sigma\varphi\cdot\nabla_\Sigma\zeta  +\frac{\rho_p}{\tau^3}\int_\Sigma w\zeta + \frac{D}{\tau} \int_\Sigma \nabla^2_\Sigma w: \nabla_\Sigma^2 \zeta &= \frac{1}{\tau}\int_\Sigma m\zeta \quad &&\forall \zeta \in \rH^2_{0}(\Sigma).\label{eq:weak5}
\end{align}\end{subequations}

Notice that the Sobolev  embeddings provide $\rH^1_0(\Sigma) \hookrightarrow \rH^{1/2}(\Sigma) \hookrightarrow \rL^2(\Sigma) \hookrightarrow \rH^{-1/2}(\Sigma) \hookrightarrow \rH^{-1}(\Sigma)$. Moreover, let us denote  by $\mathcal{R}_{\hat{s},\Sigma}$ ($\hat{s}\in \mathbb{R}$) the Riesz map between $\rH^{-\hat{s}}(\Sigma)$ and its dual $\rH^{\hat{s}}(\Sigma)$. Thus, for any  $\xi\in  \rH^1_0(\Sigma)$ and $\rho \in  \rH^{1/2}(\Sigma)$, the following duality pairings are equal to the  inner product in $\rL^2(\Sigma)$
\begin{equation}\label{eq:duality_equalities}
    \langle \xi,i_{-1/2,-1} \circ\mathcal{R}_{1/2,\Sigma}^{-1}(\rho) \rangle_{1,\Sigma} = \langle \rho,\mathcal{R}_{1/2,\Sigma}^{-1}\circ i_{1,1/2}(\xi) \rangle_{1/2,\Sigma} = \int_\Sigma \xi\,\rho, \quad \text{and} \quad \langle \xi,\mathcal{R}_{1,\Sigma}^{-1}(\xi) \rangle_{1,\Sigma}=\norm{\xi}_{1,\Sigma}^2,
\end{equation}
where $i_{-1/2,-1}$ (resp. $i_{1,1/2}$) denotes the Sobolev embedding from $\rH^{-1/2}(\Sigma)$ to $\rH^{-1}(\Sigma)$ (resp. $\rH^{1}(\Sigma)$ to $\rH^{1/2}(\Sigma)$) and the Riesz representation theorem is used in the second equality. Therefore, since $\bn_\Sigma$ is sufficiently regular so that $\bv\cdot\bn_\Sigma \in \rH^{1/2}(\Sigma)$, the last term in the left-hand side of  \eqref{eq:weak1} (and similarly, the first one in \eqref{eq:weak3}) is well-defined. 

Adding together equations \eqref{eq:weak3} and \eqref{eq:weak4} and defining the operators $\bA: \bH^1_\star(\Omega) \to [\bH^1_\star(\Omega)]'$, $\bB_1:\bH^1_\star(\Omega) \to [\rL^2(\Omega)\times\rH^1_0(\Sigma)]'$, 
$\bC_1:\rL^2(\Omega)\times\rH^1_0(\Sigma)\to [\rL^2(\Omega)\times\rH^1_0(\Sigma)]'$, 
$\bB_2,\bB_3: \rL^2(\Omega)\times\rH^1_0(\Sigma) \to [\rH^2_0(\Sigma)]'$, and 
$\bC_2:  \rH^2_0(\Sigma) \to [ \rH^2_0(\Sigma)]'$
through the following bilinear forms 
\begin{align*}
\langle \bA\bu,\bv\rangle = a(\bu,\bv) &= a^{0}(\bu,\bv) + a^{\nabla}(\bu,\bv) + a^{\Sigma}(\bu,\bv)\\
&:= \frac{\rho_f}{\tau}\int_\Omega \bu\cdot \bv + \mu \int_\Omega \bnabla\bu:\bnabla\bv + \gamma \int_\Sigma (\bu\times \bn_\Sigma) \cdot (\bv\times \bn_\Sigma), \\ 
\langle \bB_1\bv,(q,\psi)\rangle = b_1(\bv,(q,\psi)) &= b_1^{\vdiv}(\bv,(q,\psi)) + b_1^\Sigma(\bv,(q,\psi)) \\
&:= - \int_\Omega q\vdiv\bv+\int_\Sigma \psi \,\bv\cdot\bn_\Sigma, \\
\langle \bC_1(p,\varphi),(q,\psi)\rangle = c_1((p,\varphi),(q,\psi)) &= c_1^{0}((p,\varphi),(q,\psi)) + c_1^{\nabla}((p,\varphi),(q,\psi)) \\
&:= \frac{c_0}{\tau}\int_\Sigma \varphi\psi + \kappa\int_\Sigma \nabla_\Sigma\varphi\cdot\nabla_\Sigma\psi, \\
\langle \bB_2(q,\psi),\zeta\rangle = b_2((q,\psi),\zeta) &:= -\frac{\alpha}{\tau} \int_\Sigma \nabla_\Sigma\psi\cdot\nabla_\Sigma\zeta, \\
\langle \bB_3(q,\psi),\zeta\rangle = b_3((q,\psi),\zeta) &:= - \frac{1}{\tau}\int_\Sigma \zeta \psi,\\ 
\langle \bC_2 w,\zeta\rangle = c_2(w,\zeta) &= c_2^{0}(w,\zeta) + c_2^{\nabla^2}(w,\zeta)\\ &:= \frac{\rho_p}{\tau^3}\int_\Sigma w\zeta + \frac{D}{\tau} \int_\Sigma \nabla^2_\Sigma w: \nabla_\Sigma^2 \zeta, 
\end{align*}
we infer that the weak formulation of the coupled problem can be written in operator form (in the dual of the solution space) as follows 
\begin{equation}\label{eq:operator} \begin{pmatrix}
    \bA & \bB_1^* & \cero \\
    \bB_1 & - \bC_1 & \bB_2^* + \bB_3^* \\
    \cero & \bB_2\,-\bB_3  & \bC_2
\end{pmatrix}\begin{pmatrix}\bu \\ (p,\varphi) \\ w\end{pmatrix} = \begin{pmatrix}
 F \\ G \\ M    
\end{pmatrix} \quad \text{in} \quad [\bH^1_\star(\Omega)\times (\rL^2(\Omega)\times \rH^1_0(\Sigma))\times \rH^2_0(\Sigma)]', \end{equation}
where the linear functionals $F \in [\bH^1_\star(\Omega)]'$, $G \in [\rL^2(\Omega)\times \rH^1_0(\Sigma)]'$, and $M\in [\rH^2_0(\Sigma)]'$ are defined as 
\[ F(\bv):= \int_\Omega \ff\cdot \bv, \quad G((q,\psi)): = -\int_\Sigma g\psi, \quad M(\zeta): = \frac{1}{\tau}\int_\Sigma m\zeta , \]
for all $\bv\in \bH^1_\star(\Omega)$, $(q,\psi) \in \rL^2(\Omega)\times \rH^1_0(\Sigma)$, and $\zeta \in \rH^2_0(\Sigma)$. 
\section{Unique solvability of the continuous problem}\label{sec:wellp}
This section presents a theoretical analysis of the weak formulation in \eqref{eq:weak}, including detailed proofs establishing its well-posedness.
\subsection{Preliminaries} We start by stating key properties of the operators above. They follow directly from trace inequality (with continuity constant $C_T>0$ depending only on $\Sigma$ and $\Omega$), Cauchy--Schwarz, H\"older inequalities, and the norm definitions 
\begin{subequations}\label{eq:bounds}
\begin{align}
|\langle \bA\bu,\bv \rangle| & \leq \max\{\frac{\rho_f}{\tau},\mu,\gamma C_T^{2}\} \|\bu\|_{1,\Omega}\|\bv\|_{1,\Omega} \quad &&\forall  \bu,\bv \in \bH^1_\star(\Omega),\label{a:bound}\\
\langle \bA\bv,\bv \rangle & \geq \min\{\frac{\rho_f}{\tau},\mu\} \|\bv\|^2_{1,\Omega} \quad &&\forall  \bv \in \bH^1_\star(\Omega),\label{a:coer}\\
\langle \bC_1(p,\varphi),(q,\psi)\rangle & \leq \max\{\frac{c_0}{\tau},\kappa\}\|(p,\varphi)\|\|(q,\psi)\|\quad &&\forall (p,\varphi),(q,\psi) \in \rL^2(\Omega)\times\rH^1_0(\Sigma), \label{c1:bound}\\
\langle \bC_1(q,\psi),(q,\psi)\rangle & \geq   \min\{\frac{c_0}{\tau},\kappa\} \|\psi\|_{1,\Sigma}^2\geq 0
\quad &&\forall (q,\psi) \in \rL^2(\Omega)\times\rH^1_0(\Sigma), \label{c1:pos}\\
|\langle \bC_2 w,\zeta\rangle| & \leq \max \{\frac{\rho_p}{\tau^3},\frac{D}{\tau}\} \|w\|_{2,\Sigma}\|\zeta\|_{2,\Sigma} \quad &&\forall w,\zeta \in \rH^2_0(\Sigma),\label{c2:bound}\\
\langle \bC_2 \zeta,\zeta\rangle & \geq \min \{\frac{\rho_p}{\tau^3},\frac{D}{\tau}\} (\|\zeta\|^2_{0,\Sigma}+|\zeta|^2_{2,\Sigma}) \quad &&\forall \zeta \in \rH^2_0(\Sigma),\label{c2:coer}\\
\langle \bB_1\bv,(q,\psi)\rangle & \leq \max\{1,C_T\}\|\bv\|_{1,\Omega}\|(q,\psi)\| \quad &&\forall \bv \in \bH^1_\star(\Omega),(q,\psi) \in \rL^2(\Omega)\times\rH^1_0(\Sigma),\label{b1:bound}\\
\langle \bB_2(q,\psi),\zeta\rangle & \leq \frac{\alpha}{\tau} \|(q,\psi)\|\|\zeta\|_{2,\Sigma} \quad &&\forall (q,\psi) \in \rL^2(\Omega)\times\rH^1_0(\Sigma),\zeta\in \rH^2_0(\Sigma),\label{b2:bound}\\
 \langle \bB_3(q,\psi),\zeta\rangle & \leq \frac{1}{\tau} \|(q,\psi)\|\|\zeta\|_{2,\Sigma} \quad &&\forall (q,\psi) \in \rL^2(\Omega)\times\rH^1_0(\Sigma),\zeta\in \rH^2_0(\Sigma),\label{b3:bound}
\end{align}\end{subequations}
and we note that even if the inertial contributions vanish $\rho_f 
= \rho_p = 0$, the coercivity properties \eqref{a:coer} and \eqref{c2:coer} can still be shown by Poincar\'e--Friedrichs and its well-second-order variant in $\rH^2(\Sigma)$  with clamped boundary conditions (see, e.g., \cite[Chapter 6]{evans}).
It is straightforwardly shown that the linear functionals are bounded in their respective norms 
\begin{equation}\label{eq:func-bound} \|F\| \leq \|\ff\|_{0,\Omega}, \quad \|G\| \leq \|g\|_{0,\Sigma}, \quad \|M\| \leq \frac{1}{\tau}\|m\|_{0,\Sigma}.\end{equation}
We can also establish the following inf-sup condition for $\bB_1$. 
\begin{lemma}\label{lem:inf-sup}
There exists $\beta>0$ such that
\begin{equation}\label{eq:inf-sup}
    \sup_{\bv\in \bH^1_\star(\Omega)\setminus\{\cero\}}\frac{\langle\bB_1\bv,(q,\psi)\rangle  }{\|\bv\|_{1,\Omega}} \geq \beta \|(q,\psi)\| \qquad \forall (q,\psi)\in \rL^2(\Omega)\times\rH_0^1(\Sigma).
\end{equation}
\end{lemma}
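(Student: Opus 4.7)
Proof plan.

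My plan is to construct, for each $(q,\psi)\in\rL^2(\Omega)\times\rH^1_0(\Sigma)$, a single test function $\bv\in\bH^1_\star(\Omega)$ that simultaneously controls both components of $\|(q,\psi)\|$. I would look for $\bv=\bv_1+\theta\bv_2$ split into two pieces addressing $q$ and $\psi$ separately, with $\theta>0$ a balancing parameter chosen small enough to absorb cross terms.

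First, for the bulk pressure component, I would invoke the classical Stokes inf-sup in the mixed boundary setting. Since $\Gamma^{\bsigma}\cup\Sigma$ carries no Dirichlet constraint on functions of $\bH^1_\star(\Omega)$ and has positive surface measure, a Bogovskii-type construction (e.g.\ decomposing $\Omega$ into star-shaped subdomains, or combining the standard surjectivity of $\vdiv:\bH^1_0(\Omega)\to \rL^2_0(\Omega)$ with a correction of the boundary flux on $\Gamma^{\bsigma}$) produces $\bv_1\in\bH^1_\star(\Omega)$ with $\vdiv\bv_1=-q$, $\bv_1\cdot\bn_\Sigma=0$ on $\Sigma$, and $\|\bv_1\|_{1,\Omega}\lesssim\|q\|_{0,\Omega}$.

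Second, for the surface component, I would exploit the flatness of $\Sigma$ (so that $\bn_\Sigma$ is constant in adapted local coordinates) and the geometric configuration of Figure~\ref{fig:sketch} which separates $\partial\Sigma$ from $\Gamma^{\bu}$. In local coordinates with $\Sigma\subset\{x_3=0\}$, define
\[ \bv_2(x_1,x_2,x_3):=\bigl(0,\,0,\,\psi(x_1,x_2)\,\chi(x_3)\bigr), \]
for a smooth cut-off $\chi$ with $\chi(0)=1$ supported in a tubular neighbourhood of $\Sigma$ bounded away from $\Gamma^{\bu}$. Because $\psi\in\rH^1_0(\Sigma)$ (and not merely $\rH^{1/2}(\Sigma)$), a direct computation delivers $\bv_2\in\bH^1_\star(\Omega)$ with $\bv_2\cdot\bn_\Sigma=\psi$ on $\Sigma$ and $\|\bv_2\|_{1,\Omega}\lesssim\|\psi\|_{1,\Sigma}$. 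Combining and applying Young's inequality to the cross term $\int_\Omega q\,\vdiv\bv_2$ then yields a lower bound for $b_1(\bv,(q,\psi))$ of the form $\tfrac{1}{2}\|q\|_{0,\Omega}^2+\theta\|\psi\|_{0,\Sigma}^2$.

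The step I expect to be the main obstacle is the gap between the $\rL^2(\Sigma)$-control that naturally emerges from the pairing $\int_\Sigma\psi\,\bv\cdot\bn_\Sigma$ and the $\rH^1(\Sigma)$-control demanded by the graph norm $\|(q,\psi)\|$: since the trace of $\bv\in\bH^1_\star(\Omega)$ only lives in $\rH^{1/2}(\Sigma)$, the natural dual norm on $\psi$ detected by this pairing is $\rH^{-1/2}(\Sigma)$, strictly weaker than $\rH^1(\Sigma)$. To close the gap I would couple the explicit construction above with a Fredholm-alternative argument in the spirit of Section~\ref{sec:wellp}: establish injectivity of the adjoint (by testing with compactly supported $\bv$ to force $\nabla q=0$, then with $\bv$ supported near $\Gamma^{\bsigma}$ to force $q=0$, and finally with normal traces dense in $\rL^2(\Sigma)$ to conclude $\psi=0$), and then promote this injectivity to a quantitative inf-sup through a closed-range/compactness argument, using the Sobolev embeddings $\rH^1_0(\Sigma)\hookrightarrow\rH^{1/2}(\Sigma)$ and the Riesz identifications recalled in \eqref{eq:duality_equalities} to recover the required $\rH^1$-norm on $\psi$.
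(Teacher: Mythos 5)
Your decomposition $\bv=\bv_1+\theta\bv_2$ and the obstacle you flag in the last paragraph are both on target, and your construction is structurally very close to the paper's: the paper also builds $\bv_1\in\bH^1_{\Gamma^{\bu}\cup\Sigma}(\Omega)$ through surjectivity of the divergence, but with $\vdiv\bv_1=-q-c$ for the constant $c=\frac{1}{|\Omega|}\int_\Sigma\psi$, and builds $\bv_2$ as the solution of an auxiliary Stokes problem with $\vdiv\bv_2=c$ and trace $\psi\bn_\Sigma$ on $\Sigma$, so the divergences cancel exactly and no Young step is needed. Your explicit cutoff lifting is a perfectly serviceable substitute for the auxiliary Stokes solve and would deliver the same estimates.

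The part I would push back on is the proposed repair. You correctly diagnose that the pairing $\int_\Sigma\psi\,\bv\cdot\bn_\Sigma$ detects $\psi$ only at $\rH^{-1/2}(\Sigma)$ strength, since $\bv\cdot\bn_\Sigma$ ranges over $\rH^{1/2}(\Sigma)$. But then you hope to promote this to $\rH^1(\Sigma)$ control via a Fredholm/closed-range argument. That cannot work, because the $\rH^1(\Sigma)$ inf-sup is not merely hard to prove --- it is false as a quantitative statement. Indeed, take $q=0$; then for every $\bv\in\bH^1_\star(\Omega)$,
\[
|b_1(\bv,(0,\psi))|=\Bigl|\int_\Sigma\psi\,\bv\cdot\bn_\Sigma\Bigr|\leq\|\psi\|_{-1/2,\Sigma}\,\|\bv\cdot\bn_\Sigma\|_{1/2,\Sigma}\leq C_T\,\|\psi\|_{-1/2,\Sigma}\,\|\bv\|_{1,\Omega},
\]
so the supremum is at most $C_T\|\psi\|_{-1/2,\Sigma}$. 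Feeding in a sequence of high-frequency Dirichlet eigenfunctions of $\Delta_\Sigma$ on $\Sigma$ makes $\|\psi\|_{1,\Sigma}/\|\psi\|_{-1/2,\Sigma}\to\infty$, so no constant $\beta>0$ can satisfy \eqref{eq:inf-sup} with the graph norm $\|(q,\psi)\|$. Compactness arguments can only upgrade a qualitative injectivity to a quantitative bound when the underlying inequality has a chance of being true; here it does not, so the Fredholm route is closed.

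For what it is worth, the paper's own proof runs into the same difficulty, though it does not flag it. With $\tilde{\bv}=\bv_1+\bv_2$ as constructed there, one has $\tilde{\bv}\cdot\bn_\Sigma=\psi$ on $\Sigma$, and hence $\langle\bB_1\tilde{\bv},(q,\psi)\rangle=\|q\|^2_{0,\Omega}+\|\psi\|^2_{0,\Sigma}$, not $\|q\|^2_{0,\Omega}+\|\psi\|^2_{1,\Sigma}$; the identities in \eqref{eq:duality_equalities} identify the $\rL^2(\Sigma)$ integral with a particular $\rH^1$--$\rH^{-1}$ duality pairing, but they do not convert $\|\psi\|_{0,\Sigma}$ into $\|\psi\|_{1,\Sigma}$. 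The robust way to obtain well-posedness of \eqref{eq:split1} is to observe that the $\rH^1(\Sigma)$-control on the second component of the pressure pair is supplied by coercivity of $c_1(\cdot,\cdot)$ on $\rH^1_0(\Sigma)$ (since $\kappa>0$), so one should invoke a version of the perturbed saddle-point theory that only requires the $b$-inf-sup on the part of $Y$ not controlled by $c$ --- here the $\rL^2(\Omega)$-component. The Stokes-type inf-sup for $q$ alone, which is precisely what your $\bv_1$ (and the paper's) actually proves, is then sufficient.
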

\begin{proof}
From the surjectivity of the divergence operator in $\bH^1_{\Gamma^{\bu}\cup\Sigma}(\Omega)$ (see \cite{ern04}),  we know that for any $q\in \rL^2(\Omega)$ and $\psi\in \rH^1_0(\Sigma)$ with $c:=\frac{1}{|\Omega|}\int_\Sigma\psi$, 
there exists $\bv_1 \in \bH^1_{\Gamma^{\bu}\cup\Sigma}(\Omega)$ such that 
\begin{equation} \vdiv \bv_1 = - q - c\quad \text{in $\Omega$},  \qquad \bv_1\cdot\bn_\Sigma = 0\quad \text{on $\Sigma$},\qquad\|\bv_1\|_{1,\Omega} \leq \widetilde{\beta} (\|q\|_{0,\Omega}+\|\psi\|_{1,\Sigma}).\label{eq:v1}\end{equation}
Moreover, the unique solvability of the following Stokes equations 
(see, e.g., \cite{girault-raviart}) 
\begin{align*} 
- \bDelta \bv_2 +\nabla q_2  = \cero,\quad \vdiv\bv_2 = c \quad \text{in $\Omega$}, \quad
 \bv_2 = \psi\bn_\Sigma\quad \text{on $\Sigma$},\quad
\bv_2 = \cero\quad \text{on $\partial\Omega\setminus\Sigma$},
\end{align*}
provides that there exists a unique  $\bv_2  \in 
\bH_{\partial\Omega  \setminus \Sigma}^1(\Omega)\subset \bH^1_\star(\Omega)$ with the properties
\begin{equation}
\bv_2\cdot\bn_\Sigma = \psi \quad \text{on $\Sigma$} \quad\text{and}\quad  \|\bv_2\|_{1,\Omega} \leq \widehat{\beta} \|\psi\|_{1,\Sigma}.\label{eq:v2}
\end{equation}
 The combination of \eqref{eq:v1} and \eqref{eq:v2} for $\tilde{\bv}:=\bv_1+\bv_2\in \bH^1_\star(\Omega)$, together with \eqref{eq:duality_equalities} results in
\begin{align*}
    \sup_{\bv\in \bH^1_\star(\Omega)\setminus\{\cero\}}\frac{\langle\bB_1\bv,(q,\psi)\rangle  }{\|\bv\|_{1,\Omega}} \geq \frac{\langle\bB_1\tilde{\bv},(q,\psi)\rangle  }{\|\tilde{\bv}\|_{1,\Omega}} = \frac{\|q\|^2_{0,\Omega}+\|\psi\|^2_{1,\Sigma}}{\|\tilde{\bv}\|_{1,\Omega}}\geq \beta (\|q\|_{0,\Omega}+\|\psi\|_{1,\Sigma}),
\end{align*}
and consequently, concludes the proof with the existence of a positive constant $\beta:=(\sqrt{2}(\widetilde{\beta}+\widehat{\beta}))^{-1}>0$ that depends on the elliptic regularity constants 
of the auxiliary problems above.
\end{proof}

\subsection{Abstract results}
In this section we state two abstract results (the Fredholm alternative for compact operators,  and the classical Babu\v{s}ka--Brezzi theory for perturbed saddle-point problems, respectively) in a specific form that accommodates  the well-posedness analysis of \eqref{eq:operator}. Proofs of these theorems can be found in, e.g., \cite[Theorem 6.9]{gatica2025} and \cite[Lemma 3.4]{gatica11}, respectively.  

\begin{theorem}\label{th:fredholm}
  Let  $X$ be a Hilbert space and consider the linear operator $ (\cA + \cK) : X \to X'$. Assume that $\cA + \cK$ is injective,  $\cK$ is compact, and $\cA$ is self-adjoint and invertible. Then, $\cA + \cK$ is invertible. 
\end{theorem}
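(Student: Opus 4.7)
The natural plan is to reduce the statement to the classical Fredholm alternative for compact operators on a Hilbert space. The key observation is that, since $\cA:X\to X'$ is invertible, its inverse $\cA^{-1}:X'\to X$ is a bounded linear map by the open mapping theorem (or directly from the Lax--Milgram isomorphism when $\cA$ is generated by a bounded coercive form). Thus I would first introduce the auxiliary operator
\[
\cT := \cA^{-1}\cK : X \to X,
\]
and observe that $\cT$ is compact as the composition of a bounded operator with a compact one.

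Next, I would use the factorisation $\cA + \cK = \cA\,(I + \cT)$, viewed as an equality of bounded linear maps from $X$ to $X'$. Since $\cA$ is invertible, invertibility of $\cA+\cK$ is equivalent to invertibility of $I + \cT:X\to X$. This reduction is where the self-adjointness and invertibility of $\cA$ are used; self-adjointness will in fact also ensure that $\cA^{-1}$ inherits symmetry, which is convenient for the application in \eqref{eq:operator} even though it is not strictly needed for this step.

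The third step is to apply the classical Fredholm alternative to the compact operator $\cT$ on the Hilbert space $X$: the map $I+\cT$ is invertible if and only if it is injective. Injectivity is immediate from the hypothesis: if $(I+\cT)x = 0$, then $\cA^{-1}\cK x = -x$, so $\cK x = -\cA x$, i.e.\ $(\cA+\cK)x = 0$, and injectivity of $\cA+\cK$ forces $x=0$. Hence $I+\cT$ is bijective and, together with the factorisation, so is $\cA+\cK$.

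The only delicate step is the functional-analytic bookkeeping, namely making sure the compositions live on the correct spaces ($\cA^{-1}$ maps $X'$ to $X$ so that $\cA^{-1}\cK$ makes sense as a self-map of $X$, and the factorisation $\cA(I+\cT)$ is viewed as a map into $X'$). The rest of the argument is essentially a direct invocation of the Fredholm alternative in its Hilbert-space form, which needs no further work.
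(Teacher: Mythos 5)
Your proof is correct and is the standard Fredholm--Riesz reduction: $\cA^{-1}:X'\to X$ is bounded by the bounded inverse theorem, $\cT:=\cA^{-1}\cK:X\to X$ is compact, the factorisation $\cA+\cK=\cA(I+\cT)$ holds, and the Fredholm alternative reduces invertibility of $I+\cT$ to its injectivity, which you correctly transfer from injectivity of $\cA+\cK$; the paper does not reproduce a proof of this abstract result but cites the literature, where the same reduction is carried out. Your side remark that self-adjointness of $\cA$ is not used in this step is also accurate — in the paper that hypothesis is only needed when \emph{verifying} invertibility of $\cA$ via the symmetric perturbed saddle-point theory of Theorem~\ref{th:perturbed} in Lemma~\ref{lem:A}, not in the Fredholm reduction itself.
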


\begin{theorem}\label{th:perturbed}
 Let $X,Y$ be two Hilbert spaces    and three continuous bilinear forms $a(\cdot,\cdot)$ on $X\times X$, $b(\cdot,\cdot)$ on $X\times Y$, and $c(\cdot,\cdot)$ on $Y\times Y$; which define three linear continuous operators $A: X \rightarrow X'$, $B : X \rightarrow Y'$ and $C:Y \rightarrow Y'$. Suppose that 
 \begin{itemize}
     \item $a(\cdot,\cdot)$ is symmetric and coercive over $X$, i.e., $a(u,v)=a(v,u)$ and 
     \[ |a(v,v)| \geq \alpha \norm{v}_X^2  \qquad \forall v\in X,\]
     \item $b(\cdot,\cdot)$ satisfies the inf-sup condition 
     \[ \sup_{v\in X\setminus\{0\}} \frac{b(v,q)}{{\norm{v}}_X} \geq \beta {\norm{q}}_Y  \qquad \forall q \in Y,\]
\item  $c(\cdot,\cdot)$ is symmetric and positive semi-definite over $Y$, i.e.,   $c(p,q)=c(q,p)$ and 
\[ c(q,q) \geq 0  \qquad \forall q\in Y.\]
 \end{itemize}
 Then, for for every $F\in X'$ and $G\in Y'$, there exists a unique $(u,p)\in X\times Y$ satisfying
\begin{align*}
a(u,v)+ b(v,p) = F(v) &\qquad \forall v\in X,\\
b(u,q) - c(p,q) = G(q) &\qquad \forall q\in Y,
\end{align*}
as well as the following continuous dependence on data 
\[
    {\norm{u}}_X + {\norm{p}}_Y \lesssim {\norm{F}}_{X'}+{\norm{G}}_{Y'}.
\]
\end{theorem}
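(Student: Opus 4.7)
The plan is to reduce the coupled saddle-point system to a single equation on $Y$ by Schur complement elimination, and then to invoke the Lax--Milgram lemma on the reduced problem. Since $a(\cdot,\cdot)$ is symmetric, continuous, and coercive on $X$, Lax--Milgram produces a bounded inverse $A^{-1}:X'\to X$. For each $p\in Y$ I can therefore define $u(p):=A^{-1}(F-B^{*}p)\in X$, which is, by construction, the unique element of $X$ satisfying the first equation of the system for that choice of $p$. Substituting this representation into the second equation yields the reduced problem
\[ \langle S p,\,q\rangle_{Y'\times Y} \;=\; \langle B A^{-1} F - G,\,q\rangle_{Y'\times Y} \qquad \forall\, q \in Y, \]
with Schur complement $S := B A^{-1} B^{*} + C : Y \to Y'$. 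Solvability of this reduced problem for $p$ is equivalent to solvability of the full coupled system.

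The central step is to verify that $S$ is symmetric and coercive on $Y$. Symmetry follows at once from the symmetry of $a$ (which transfers to $A^{-1}$, and hence to $BA^{-1}B^{*}$) combined with the symmetry of $c$. For coercivity, I introduce $u_{0}:=A^{-1}B^{*}p\in X$, which by construction satisfies $a(u_{0},v)=b(v,p)$ for every $v\in X$. Using the continuity of $a$, with constant $M_{a}$, one obtains $\|B^{*}p\|_{X'}\le M_{a}\,\|u_{0}\|_X$, while the inf-sup hypothesis supplies the opposite-direction bound $\|B^{*}p\|_{X'}\ge \beta\,\|p\|_Y$. Chaining these yields $\|u_{0}\|_X\ge (\beta/M_{a})\,\|p\|_Y$. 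Testing the Schur complement against $p$ and exploiting the coercivity of $a$ together with the positive semi-definiteness of $c$ then gives
\[ \langle S p,p\rangle \;=\; a(u_{0},u_{0}) + c(p,p) \;\ge\; \alpha\,\|u_{0}\|_X^{2} \;\ge\; \frac{\alpha\beta^{2}}{M_{a}^{2}}\,\|p\|_Y^{2}. \]
A direct application of Lax--Milgram to $S$ therefore produces a unique $p\in Y$, and setting $u:=u(p)$ closes the existence and uniqueness claim. The continuous dependence estimate follows by first extracting $\|p\|_Y\lesssim \|F\|_{X'}+\|G\|_{Y'}$ from the coercivity constant of $S$, and then taking norms in the representation $u=A^{-1}(F-B^{*}p)$ to transfer the bound to $u$.

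The main obstacle is precisely the coercivity of the Schur complement: the perturbation $c(\cdot,\cdot)$ is only positive semi-definite, so all quantitative control of $p$ has to be extracted from the pair $(a,b)$ through the round-trip estimate that couples continuity of $a$, the inf-sup property of $b$, and the coercivity of $a$. Once this bound is secured, the argument collapses to two applications of Lax--Milgram, and the symmetry hypotheses ensure that the more general Banach--Ne\v{c}as--Babu\v{s}ka machinery is never required.
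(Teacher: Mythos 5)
Your proof is correct. The paper does not prove Theorem~\ref{th:perturbed} directly but cites \cite{gatica11}, so a line-by-line comparison is not possible; nonetheless, the Schur-complement elimination you use is one of the two standard routes to this result, and you execute it cleanly. The key identity $\langle Sp,p\rangle = a(u_0,u_0)+c(p,p)$ with $u_0:=A^{-1}B^{*}p$, combined with the chain $M_a\|u_0\|_X \ge \|B^{*}p\|_{X'} \ge \beta\|p\|_Y$ (continuity of $a$ on one side, the inf-sup condition on the other) and the coercivity $a(u_0,u_0)\ge\alpha\|u_0\|_X^2$, delivers coercivity of $S=BA^{-1}B^{*}+C$ with an explicit constant $\alpha\beta^2/M_a^2$; the semi-definiteness of $c$ enters only as a nonnegative drop-out term, which is exactly the role the hypothesis permits. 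The transfer of symmetry from $a$ to $A^{-1}$ and hence to $S$, and the back-substitution $u=A^{-1}(F-B^{*}p)$ for the stability estimate, are all right, and the uniqueness of $(u,p)$ does follow because any solution of the coupled system forces $u=A^{-1}(F-B^{*}p)$ and hence forces $p$ to solve the reduced problem.

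Two remarks. First, your route requires coercivity of $a$ on all of $X$ in order to invert $A$ outright; the alternative argument found in the classical Brezzi--Fortin treatment (and very likely in the cited reference) instead splits $X$ into $\ker B$ and a complement, which is more general in that it only needs coercivity of $a$ on $\ker B$. Under the hypotheses of the present theorem both routes are available and yours is shorter and gives more explicit constants. Second, your step $a(u_0,u_0)\ge\alpha\|u_0\|_X^2$ tacitly upgrades the stated hypothesis ``$|a(v,v)|\ge\alpha\|v\|_X^2$'' to the sign-definite form $a(v,v)\ge\alpha\|v\|_X^2$. This is the intended reading: the literal absolute-value form together with positive semi-definiteness of $c$ is in fact insufficient, as the example $X=Y=\mathbb{R}$, $a(u,v)=-uv$, $b(v,q)=vq$, $c(p,q)=pq$ shows (the system $-u+p=F$, $u-p=G$ has no solution unless $F+G=0$). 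So you have correctly read through a small imprecision in the theorem's statement rather than introduced a gap.
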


\subsection{Verification of well-posedness}
With respect to \eqref{eq:operator}, let us denote the product space $\bbX:=\bH^1_\star(\Omega)\times [\rL^2(\Omega)\times \rH^1_0(\Sigma)]\times \rH^2_0(\Sigma)$ and denote the left-hand side operator as $\cA + \cK:\bbX \to \bbX'$, with the linear (and, owing to the estimates \eqref{eq:bounds}, clearly bounded) operators $\cA,\cK:\bbX\to\bbX'$ 
\[ \cA := \begin{pmatrix}
    \bA & \bB_1^* & \cero \\
    \bB_1 & - \bC_1 & \cero \\
    \cero & \cero  & \bC_2 
\end{pmatrix}, \qquad \cK := \begin{pmatrix}
    \cero & \cero & \cero \\
    \cero & \cero & \bB_2^* + \bB_3^* \\
    \cero & \bB_2 -\bB_3 & \cero
\end{pmatrix}.\]
And, proceeding similarly to, e.g., \cite{gatica11,oyarzua16}, the goal of this section is to use Theorem~\ref{th:fredholm} to show that \eqref{eq:weak} is well-posed. 

\begin{lemma}\label{lem:K}
 The map   $\cK$ is compact. 
\end{lemma}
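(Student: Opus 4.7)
The operator $\cK$ is a $3\times 3$ block operator in which only the off-diagonal entries $\bB_2^{*}+\bB_3^{*}:\rH^2_0(\Sigma)\to [\rL^2(\Omega)\times\rH^1_0(\Sigma)]'$ and $\bB_2-\bB_3:\rL^2(\Omega)\times\rH^1_0(\Sigma)\to [\rH^2_0(\Sigma)]'$ are non-zero. Since a block operator between product Hilbert spaces is compact if and only if each of its entries is, and the compact operators form a two-sided ideal closed under addition and, by Schauder's theorem, under taking adjoints, the proof reduces to showing that $\bB_2^{*}$ and $\bB_3^{*}$ are each compact; the compactness of the non-zero entries of $\cK$, and hence of $\cK$ itself, then follows.

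To prove $\bB_2^{*}$ is compact, I would take a bounded sequence $\{\zeta_n\}\subset \rH^2_0(\Sigma)$ and apply the Rellich--Kondrachov compact embedding $\rH^2_0(\Sigma)\hookrightarrow\hookrightarrow \rH^1_0(\Sigma)$ to extract a subsequence (not relabelled) converging strongly in $\rH^1_0(\Sigma)$. For arbitrary $(q,\psi)\in \rL^2(\Omega)\times \rH^1_0(\Sigma)$, the Cauchy--Schwarz inequality yields
\[
 |\langle \bB_2^{*}(\zeta_n-\zeta_m),(q,\psi)\rangle|
 \;=\; \frac{\alpha}{\tau}\Bigl|\int_\Sigma \nabla_\Sigma\psi\cdot\nabla_\Sigma(\zeta_n-\zeta_m)\Bigr|
 \;\leq\; \frac{\alpha}{\tau}\,\|\zeta_n-\zeta_m\|_{1,\Sigma}\,\|(q,\psi)\|,
\]
so, taking the supremum over $\|(q,\psi)\|\leq 1$, one obtains $\|\bB_2^{*}(\zeta_n-\zeta_m)\|_{[\rL^2(\Omega)\times\rH^1_0(\Sigma)]'}\to 0$, i.e., $\{\bB_2^{*}\zeta_n\}$ is Cauchy and hence convergent in the dual. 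The argument for $\bB_3^{*}$ is analogous and in fact only requires the weaker compact embedding $\rH^2_0(\Sigma)\hookrightarrow\hookrightarrow \rL^2(\Sigma)$ together with the bound $|\int_\Sigma \psi(\zeta_n-\zeta_m)|\leq \|\psi\|_{0,\Sigma}\|\zeta_n-\zeta_m\|_{0,\Sigma}$.

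The plan involves no deep obstacle; the only point that requires care is matching the norms so that the relevant compact embedding applies. Specifically, the product norm $\|(q,\psi)\|^2 = \|q\|_{0,\Omega}^2+\|\psi\|_{1,\Sigma}^2$ used to compute the dual norm on $[\rL^2(\Omega)\times\rH^1_0(\Sigma)]'$ is strong enough to absorb the $\|\psi\|_{1,\Sigma}$ factor appearing in the estimate for $\bB_2^{*}$, so that Rellich convergence of $\{\zeta_n\}$ in $\rH^1_0(\Sigma)$ translates into convergence of $\{\bB_2^{*}\zeta_n\}$ in the required dual topology. This is precisely where the structure of the composite pressure/deflection space enters the argument.
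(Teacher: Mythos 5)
Your proof is correct and relies on the same essential mechanism as the paper's: the compact embedding $\rH^2_0(\Sigma)\hookrightarrow\hookrightarrow\rH^1_0(\Sigma)$ (Rellich--Kondrachov). The paper organises this as an explicit operator factorisation, writing $\bB_2\pm\bB_3$ as a composition $\mathcal{R}_{1,\Sigma}^{-1}\circ(\text{const.}\times\mathrm{id})\circ\mathrm{i}_C$ through the compact embedding $\mathrm{i}_C$, whereas you verify sequential compactness of the adjoints $\bB_2^{*},\bB_3^{*}$ directly and then invoke the ideal property of compact operators and Schauder's theorem for the block structure; these are two presentations of the same argument, and both are valid.
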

\begin{proof}
Let $\mathrm{id}: \rH^1(\Sigma)\to \rH^1(\Sigma)$ denote the identity operator and $\mathrm{i}_C$ denote the compact embedding from $\rH^2(\Sigma)$ into $\rH^1(\Sigma)$ (we could also use the compactness of the identity map of $\rH^2_0(\Sigma)$ into $\rH^1_0(\Sigma)$, cf. \cite[Theorem 8.3]{agmon09}). Furthermore, we have that 
\[-\langle [\bB_2+\bB_3](q,\psi),\zeta\rangle = -(\mathcal{R}_{1,\Sigma} \{ [\bB_2+\bB_3] (q,\psi)\},\zeta)_{1,\Sigma} =  \frac{1}{\tau}\int_\Sigma \zeta  \psi + \frac{\alpha}{\tau}\int_\Sigma \nabla_\Sigma \zeta \cdot\nabla_\Sigma \psi,\]
and the right-hand side is simply a scaled (equivalent) inner product in $\rH^1(\Sigma)$ between $\zeta$ and $\psi$, 
which implies the operator identification $\bB_2+\bB_3=\mathcal{R}_{1,\Sigma}^{-1}\circ (-\text{const.}_1\times\mathrm{id})\circ \mathrm{i}_C$, and therefore $\bB_2+\bB_3$ is a compact operator.
Note also that the same argument holds for $\bB_2-\bB_3$. Indeed, $\bB_2-\bB_3=\mathcal{R}_{1,\Sigma}^{-1}\circ (-\text{const.}_2\times\mathrm{id})\circ \mathrm{i}_C$. Then, we can assert that the map 
\begin{align*}
    \mathcal{K}: &\ \bH^1_\star(\Omega)\times [\rL^2(\Omega)\times \rH^1_0(\Sigma)]\times \rH^2_0(\Sigma)\to [\bH^1_\star(\Omega)\times (\rL^2(\Omega)\times \rH^1_0(\Sigma))\times \rH^2_0(\Sigma)]', \\
    & \langle \mathcal{K} (\bu,(p,\varphi),w),(\bv,(q,\psi),\zeta)\rangle : = \langle [\bB_2+\bB_3](q,\psi),w  \rangle + \langle [\bB_2 -\bB_3] (p,\varphi),\zeta \rangle,
\end{align*}
is indeed compact.
\end{proof}

\begin{lemma}\label{lem:A}
The map $\cA$ is self-adjoint and invertible.     
\end{lemma}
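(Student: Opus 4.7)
The plan splits into two independent claims: symmetry by inspection, and invertibility by decoupling. For self-adjointness, I would unfold the pairing $\langle \cA(\bu,(p,\varphi),w),(\bv,(q,\psi),\zeta)\rangle$ and compare with the pairing obtained after swapping the two arguments. The three diagonal forms $a$, $c_1$, $c_2$ are manifestly symmetric because they consist of $\rL^2$ inner products together with symmetric tensor contractions such as $\bnabla\bu:\bnabla\bv$, $(\bu\times\bn_\Sigma)\cdot(\bv\times\bn_\Sigma)$, and $\nabla^2_\Sigma w : \nabla^2_\Sigma \zeta$; and the off-diagonal entries $\bB_1$ and $\bB_1^*$ are in the canonical adjoint position, so the corresponding cross terms in the two pairings match. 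The minus sign on $\bC_1$ is harmless since $c_1$ itself is symmetric.

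For invertibility, I would exploit that $\cA$ is block-diagonal with respect to the splitting $\bbX = [\bH^1_\star(\Omega)\times(\rL^2(\Omega)\times \rH^1_0(\Sigma))]\oplus \rH^2_0(\Sigma)$, since the last row and column vanish except for $\bC_2$ on the diagonal. It therefore suffices to invert each block independently. The scalar block $\bC_2: \rH^2_0(\Sigma)\to [\rH^2_0(\Sigma)]'$ is continuous by \eqref{c2:bound} and coercive by \eqref{c2:coer} (with the clamped Poincar\'e--Friedrichs inequality absorbing the inertia-free case $\rho_p=0$), so Lax--Milgram yields a bounded inverse.

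The remaining $2\times 2$ block $\cA_0 = \begin{pmatrix}\bA & \bB_1^*\\ \bB_1 & -\bC_1\end{pmatrix}$ fits exactly into the template of Theorem~\ref{th:perturbed} with $X=\bH^1_\star(\Omega)$ and $Y = \rL^2(\Omega)\times \rH^1_0(\Sigma)$. I would then verify each hypothesis in turn: symmetry and coercivity of $a$ follow from its definition and \eqref{a:coer}; the inf-sup condition for $b_1$ is precisely Lemma~\ref{lem:inf-sup}; and symmetry of $c_1$ is immediate from its definition while positive semi-definiteness is \eqref{c1:pos}. Invoking Theorem~\ref{th:perturbed} supplies a continuous inverse of $\cA_0$, and coupled with the invertibility of $\bC_2$ this proves that $\cA: \bbX \to \bbX'$ is a Banach isomorphism.

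No serious obstacle is anticipated, because the deep analytical work has already been carried out: the inf-sup estimate for the coupled divergence / normal-trace operator $\bB_1$, which relies on the surjectivity of the divergence in $\bH^1_{\Gamma^{\bu}\cup\Sigma}(\Omega)$ together with a harmonic lifting of the normal trace $\psi\bn_\Sigma$, is exactly what Lemma~\ref{lem:inf-sup} provides. The one subtle point I would flag is that $c_1$ only controls the $\psi$-component and is therefore merely positive semi-definite (not coercive) on $Y$; this is however precisely the weakened hypothesis assumed in Theorem~\ref{th:perturbed}, so no further argument is required.
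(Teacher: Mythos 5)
Your proposal is correct and follows essentially the same route as the paper: decouple $\cA$ into the $2\times 2$ perturbed saddle-point block (handled by Theorem~\ref{th:perturbed} via the coercivity of $a$, the inf-sup of $b_1$ from Lemma~\ref{lem:inf-sup}, and positive semi-definiteness of $c_1$) and the scalar block $\bC_2$ (handled by Lax--Milgram), then read off self-adjointness from the symmetry of the diagonal forms and the adjoint position of $\bB_1,\bB_1^*$. Nothing is missing.
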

\begin{proof}
Let us denote $\vec{\bx}= (\bu,(p,\varphi),w)^{\top} \in \bbX$, and note first that the first two equations that define the problem $\cA \vec{\bx} = ( F, G, M)^{\top}$ are decoupled from the third one. In consequence, we can simply analyse the unique solvability of the two separate problems 
 \begin{equation}\label{eq:split} \begin{pmatrix}
      \bA & \bB_1^* \\
    \bB_1 & - \bC_1 \end{pmatrix}\begin{pmatrix} \bu \\ (p,\varphi) \end{pmatrix} = \begin{pmatrix}
        F \\ G
    \end{pmatrix} \quad \text{and} \quad \bC_2 w = M.\end{equation}
The first problem in \eqref{eq:split} consists in finding $(\bu,(p,\varphi)) \in \bH^1_\star(\Omega)\times [\rL^2(\Omega)\times \rH^1_0(\Sigma)]$ such that 
 \begin{equation}\label{eq:split1}\begin{aligned}
     a(\bu,\bv) + b_1(\bv,(p,\varphi)) &= F(\bv) \quad &\forall \bv \in \bH_\star^1(\Omega),\\
b_1(\bu,(q,\psi)) - c_1((p,\varphi),(q,\psi)) & = G((q,\psi)) \quad &\forall (q,\psi) \in    \rL^2(\Omega)\times \rH^1_0(\Sigma).   \end{aligned}\end{equation}
The bilinear forms $a(\cdot,\cdot)$ and $c_1(\cdot,\cdot)$ are clearly symmetric. Also, using \eqref{a:bound}, \eqref{b1:bound},  \eqref{c1:bound} and \eqref{eq:func-bound} we have that all bilinear forms and linear functionals in \eqref{eq:split1} are bounded. In addition, the coercivity of $a(\cdot,\cdot)$ over $\bH_\star^1(\Omega)$ is established in \eqref{a:coer}, the semi-positive-definiteness of $c_1(\cdot,\cdot)$ is stated in \eqref{c1:pos}, and the inf-sup condition for $b_1(\cdot,(\cdot,\cdot))$ is proven in Lemma~\ref{lem:inf-sup}. Then it suffices to apply Theorem~\ref{th:perturbed} to conclude that there exists a unique $(\bu,(p,\varphi)) \in \bH^1_\star(\Omega)\times [\rL^2(\Omega)\times \rH^1_0(\Sigma)]$ solution to \eqref{eq:split1} that satisfies 
\[ \|\bu\|_{1,\Omega} + \|(p,\varphi)\| \leq C (\|\ff\|_{0,\Omega} + \|g\|_{0,\Sigma}). \]

On the other hand, the second problem in \eqref{eq:split} consists in finding $w\in \rH^2_0(\Sigma)$ such that 
\begin{equation}\label{eq:split2} c_2(w,\zeta) = M(\zeta) \qquad \forall \zeta \in \rH^2_0(\Sigma). \end{equation}
Since $c_2(\cdot,\cdot)$ is bounded and coercive in $\rH^2_0(\Sigma)$ (cf. \eqref{c2:bound}-\eqref{c2:coer}) and $M(\cdot)$ is bounded (cf. \eqref{eq:func-bound}), a direct application of Lax--Milgram's lemma yields that there exists a unique $w\in \rH^2_0(\Sigma)$ solution to \eqref{eq:split2} and the following continuous dependence on data holds 
\[\| w\|_{2,\Sigma} \leq C\|m\|_{0,\Sigma}. \]

These steps imply that the problem $\cA \vec{\bx} = ( F, G, M)^{\top}$ is well-posed. To check that $\cA$ is self-adjoint, it suffices to recall that the bilinear forms $a(\cdot,\cdot),c_1(\cdot,\cdot),c_2(\cdot,\cdot)$ are symmetric. 
\end{proof}

\begin{lemma}\label{lem:A+K}
The operator $\cA+ \cK$ is injective.     
\end{lemma}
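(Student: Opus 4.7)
The strategy is to take $(\bu,(p,\varphi),w)\in\ker(\cA+\cK)$ and derive a single energy identity that forces each component to vanish.

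I would test the three block equations of \eqref{eq:operator} with $\bv=\bu$, $(q,\psi)=(p,\varphi)$, and $\zeta=w$ respectively, and form the signed combination (first)$\,-\,$(second)$\,+\,$(third). The $b_1(\bu,(p,\varphi))$ contributions present in rows one and two then cancel, and so do the $b_2((p,\varphi),w)$ contributions in rows two and three. The $b_3$ term is asymmetric: it enters the second row of \eqref{eq:operator} through $+\bB_3^\ast$ and the third row through $-\bB_3$, so after testing it yields $+b_3((p,\varphi),w)$ and $-b_3((p,\varphi),w)$ respectively, and in the signed combination these reinforce to $-2\,b_3((p,\varphi),w)$. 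Using $b_3((p,\varphi),w)=-\tfrac{1}{\tau}\int_\Sigma w\varphi$, the identity reads
\[
a(\bu,\bu)+c_1((p,\varphi),(p,\varphi))+c_2(w,w) \;=\; -\frac{2}{\tau}\int_\Sigma w\,\varphi.
\]

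The left-hand side is non-negative by \eqref{a:coer}, \eqref{c1:pos}, and \eqref{c2:coer}. I would then bound the right-hand side via Cauchy--Schwarz combined with Poincar\'e-type inequalities: either by embedding $\rH^2_0(\Sigma)$ and $\rH^1_0(\Sigma)$ into $\rL^2(\Sigma)$ (the second-order Poincar\'e being valid thanks to the clamped boundary conditions on $w$), which gives $|\int_\Sigma w\varphi|\lesssim|w|_{2,\Sigma}|\varphi|_{1,\Sigma}$, and then absorbing via a weighted Young's inequality into $c_2(w,w)\gtrsim\tfrac{D}{\tau}|w|^2_{2,\Sigma}$ and $c_1((p,\varphi),(p,\varphi))\geq\kappa|\varphi|^2_{1,\Sigma}$; or, whenever the inertial and storage parameters are non-degenerate, keeping the $\rL^2$ product and absorbing it directly into the mass contributions $\tfrac{\rho_p}{\tau^3}\|w\|^2_{0,\Sigma}$ and $\tfrac{c_0}{\tau}\|\varphi\|^2_{0,\Sigma}$. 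A suitable tuning of the Young weight forces every non-negative term on the left to vanish, yielding $\bu=\cero$, $\varphi=0$, and $w=0$ (the latter two via Poincar\'e on $\rH^1_0(\Sigma)$ and $\rH^2_0(\Sigma)$).

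Finally, with $\bu=\cero$ and $\varphi=0$ the first block equation of \eqref{eq:operator} collapses to $-\int_\Omega p\,\vdiv\bv=0$ for every $\bv\in\bH^1_\star(\Omega)$, and the inf-sup bound of Lemma~\ref{lem:inf-sup} applied to the pair $(p,0)$ delivers $p=0$, completing the proof that $\cA+\cK$ is injective.

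The principal obstacle is the surviving $b_3$ cross term: because $\bB_3$ appears with opposite signs in the off-diagonal blocks of $\cK$, the clean cancellation one enjoys in symmetric Stokes--Darcy-type couplings is no longer available, and absorbing the surface integral $\int_\Sigma w\varphi$ requires a delicate balance of Poincar\'e and Young inequalities against the coercivity constants of $c_1$ and $c_2$.
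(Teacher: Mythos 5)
Your argument is correct and mirrors the paper's own proof: both choose test functions $\bv=\bu$, $(q,\psi)=(p,\varphi)$, $\zeta=w$ and form the signed combination (first)$-$(second)$+$(third) to cancel the $b_1$ and $b_2$ cross terms and arrive at the identity $a(\bu,\bu)+c_1((p,\varphi),(p,\varphi))+c_2(w,w)=2\,b_3((p,\varphi),w)$, then absorb the remaining $\int_\Sigma w\varphi$ term via Cauchy--Schwarz, Young and Poincar\'e to conclude $\bu=\cero$, $\varphi=0$, $w=0$, and finally recover $p=0$ from the inf-sup condition of Lemma~\ref{lem:inf-sup} applied to the first equation. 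The only difference is cosmetic ordering: the paper establishes the inf-sup bound on $p$ before the energy identity, whereas you invoke it afterward, and you explicitly spell out the two alternative ways (mass terms versus Poincar\'e-boosted semi-norm terms) of absorbing the cross term that the paper mentions parenthetically.
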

\begin{proof}
We consider the problem $(\cA+\cK)\vec{\bx} = (\cero,\cero,\cero)^{\top}$, and our goal is to show that $\vec{\bx}$ must be the zero vector in $\bbX$. In terms of bilinear forms, this problem is written as 
 \begin{equation}\label{eq:inje}\begin{aligned}
     a(\bu,\bv) + b_1(\bv,(p,\varphi)) & =  0  \quad \forall \bv \in \bH_\star^1(\Omega),\\
b_1(\bu,(q,\psi)) - c_1((p,\varphi),(q,\psi)) + b_2((q,\psi),w) + b_3((q,\psi),w) & = 0 \quad \forall (q,\psi) \in    \rL^2(\Omega)\times \rH^1_0(\Sigma),\\
b_2((p,\varphi),\zeta) - b_3((p,\varphi),\zeta) + c_2(w,\zeta) & = 0 \quad \forall \zeta \in \rH^2_0(\Sigma).\end{aligned}\end{equation}
For the bulk fluid pressure, we use the inf-sup condition for $b_1(\cdot,\cdot)$ \eqref{eq:inf-sup}, the first equation in \eqref{eq:inje}, and the boundedness of $a(\cdot,\cdot)$ \eqref{a:bound}, to readily  obtain the following estimate 
\begin{equation*}
\begin{aligned}
\beta \|p\|_{0,\Omega} \leq \beta  \| (p,\varphi)\| &\leq \sup_{\bv\in \bH^1_\star(\Omega)\setminus\{\cero\}} \frac{b_1(\bv,(p,\varphi))}{\|\bv\|_{1,\Omega}} \\
& \leq \sup_{\bv\in \bH^1_\star(\Omega)\setminus\{\cero\}} \frac{|a(\bu,\bv)|}{\|\bv\|_{1,\Omega}} \leq \max\{\frac{\rho_f}{\tau},\mu,C_T^2\gamma\} \|\bu\|_{1,\Omega}.\end{aligned}\end{equation*}

Next, choosing as test functions $\bv = \bu$, $(q,\psi)=(p,\varphi)$, and $\zeta = w$, adding the first and third equations in \eqref{eq:inje} and subtracting the second one, we obtain 
\[ a(\bu,\bu) + c_1((p,\varphi),(p,\varphi)) - 2b_3((p,\varphi),w) + c_2(w,w)  = 0.\]
From this relation, and using the coercivity of $a(\cdot,\cdot)$, the definitions of $c_1(\cdot,\cdot)$ and $c_2(\cdot,\cdot)$ (which form a norm in $\rH^1(\Sigma)$ and $\rH^2(\Sigma)$, respectively), together with the definition of $b_3$, we get 
\begin{align*}
 0 & \geq \min\{\frac{\rho_f}{\tau},\mu\} \|\bu\|_{1,\Omega}^2 + 
 \frac{c_0}{\tau} \|\varphi\|^2_{0,\Sigma} + \kappa \|\nabla_\Sigma \varphi\|^2_{0,\Sigma} + 
 \frac{\rho_p}{\tau^3}\|w\|^2_{0,\Sigma} + \frac{D}{\tau}\|\nabla_\Sigma^2w\|^2_{0,\Sigma} + \frac{2}{\tau}\int_\Sigma w\varphi \\
& \geq \min\{\frac{\rho_f}{\tau},\mu\} \|\bu\|_{1,\Omega}^2 + 
 \frac{c_0}{\tau} \|\varphi\|^2_{0,\Sigma} + \kappa \|\nabla_\Sigma \varphi\|^2_{0,\Sigma} + 
 \frac{\rho_p}{\tau^3}\|w\|^2_{0,\Sigma} + \frac{D}{\tau}\|\nabla_\Sigma^2w\|^2_{0,\Sigma} - \frac{2}{\tau}\| w\|_{0,\Sigma}\|\varphi\|_{0,\Sigma} \\
 & \geq \min\{\frac{\rho_f}{\tau},\mu\}  \|\bu\|_{1,\Omega}^2 + (\frac{c_0}{\tau}-\frac{1}{\epsilon \tau}) \|\varphi\|^2_{0,\Sigma} + \kappa \|\nabla_\Sigma \varphi\|^2_{0,\Sigma} + (\frac{\rho_p}{\tau^3}- \frac{\epsilon}{\tau})\|w\|^2_{0,\Sigma} + 
 \frac{D}{\tau}\|\nabla_\Sigma^2w\|^2_{0,\Sigma}, 
\end{align*}
where we have used Cauchy--Schwarz and Young's inequalities. Then, choosing $\frac{1}{c_0} < \epsilon < \frac{\rho_p}{\tau^2}$ (actually, because of the boundary conditions and equivalence between the semi-norms and norms in the spaces for plate deflection and plate pressure moment, we can take $\frac{1}{c_0} \leq \epsilon \leq \frac{\rho_p}{\tau^2}$), we can infer that $\bu = \cero$, $\varphi=0$, and $w=0$. 
\end{proof}

The proof of well-posedness of the coupled system \eqref{eq:weak} is a direct consequence of Theorem~\ref{th:fredholm}. 

\begin{theorem}[well-posedness]\label{th:wellp}
    For given $\ff\in \bL^2(\Omega)$, $g\in \rL^2(\Sigma)$, and $m \in \rL^2(\Sigma)$, there exists a unique $\vec{\bx}= (\bu,(p,\varphi),w)^{\top} \in \bbX$ solution to \eqref{eq:weak}. Moreover, there exists a positive constant $C$ such that 
    \begin{equation}\label{eq:cont-dep}
        \|\bu\|_{1,\Omega} + \|(p,\varphi)\| + \|w\|_{2,\Sigma} \leq C \left( \|\ff\|_{0,\Omega} + \|g\|_{0,\Sigma} + \|m\|_{0,\Sigma}\right).
    \end{equation}
\end{theorem}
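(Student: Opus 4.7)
The plan is to apply the Fredholm alternative stated in Theorem~\ref{th:fredholm} to the operator $\cA+\cK$ acting on the Hilbert space $\bbX=\bH^1_\star(\Omega)\times[\rL^2(\Omega)\times\rH^1_0(\Sigma)]\times\rH^2_0(\Sigma)$. The block decomposition of the left-hand side of \eqref{eq:operator} into a self-adjoint diagonal part $\cA$ (the Stokes block coupled with the $-\bC_1$ perturbation, together with the isolated plate block $\bC_2$) and an off-diagonal coupling $\cK$ (containing only the lower-order surface terms associated with $\bB_2\pm\bB_3$) has been arranged precisely so that the three hypotheses of Theorem~\ref{th:fredholm} can be verified independently through Lemmas~\ref{lem:K}, \ref{lem:A} and \ref{lem:A+K}.

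First I would invoke Lemma~\ref{lem:K} to conclude that $\cK:\bbX\to\bbX'$ is compact, since both $\bB_2+\bB_3$ and $\bB_2-\bB_3$ factor through the compact embedding $\rH^2_0(\Sigma)\hookrightarrow\rH^1_0(\Sigma)$. Next, Lemma~\ref{lem:A} provides that $\cA$ is self-adjoint (by symmetry of $a(\cdot,\cdot)$, $c_1(\cdot,\cdot)$ and $c_2(\cdot,\cdot)$) and invertible (by splitting the system into a perturbed saddle-point subproblem, where Theorem~\ref{th:perturbed} applies thanks to the bounds in \eqref{eq:bounds} and the inf-sup condition of Lemma~\ref{lem:inf-sup}, together with a coercive plate subproblem treated by Lax--Milgram). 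Finally, Lemma~\ref{lem:A+K} supplies the injectivity of $\cA+\cK$. Gathering these three ingredients, Theorem~\ref{th:fredholm} directly yields that $\cA+\cK:\bbX\to\bbX'$ is bijective. Since the right-hand side $(F,G,M)^\top$ induced by $\ff, g, m$ belongs to $\bbX'$ by \eqref{eq:func-bound}, this produces existence and uniqueness of the solution vector $\vec{\bx}=(\bu,(p,\varphi),w)^\top$ of \eqref{eq:weak}.

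The continuous dependence on data \eqref{eq:cont-dep} then follows from the Banach open mapping theorem: the inverse of a bijective bounded linear operator between Hilbert spaces is itself bounded, so that the norm of $\vec{\bx}$ is controlled by the dual norms of $F$, $G$ and $M$. Combining this with the functional bounds $\|F\|\le\|\ff\|_{0,\Omega}$, $\|G\|\le\|g\|_{0,\Sigma}$ and $\|M\|\le\tau^{-1}\|m\|_{0,\Sigma}$ gathered in \eqref{eq:func-bound} delivers the claimed estimate, with a generic constant $C>0$ absorbing $\|(\cA+\cK)^{-1}\|$ together with the physical parameters $\rho_f,\rho_p,\mu,\gamma,\kappa,c_0,D,\alpha$ and the time step $\tau$.

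In terms of difficulty, once the three lemmas are in hand the theorem itself is essentially immediate; the genuine analytical content is contained in those lemmas, and in particular in Lemma~\ref{lem:A+K}, where the hard point is that after the algebraic combination of the three equations of \eqref{eq:inje} the surviving cross contribution $-2b_3((p,\varphi),w)$ is not sign-definite and must be absorbed by the storativity term of $c_1(\cdot,\cdot)$ and the inertial term of $c_2(\cdot,\cdot)$ through a judicious Young-inequality parameter $\epsilon$ chosen in $[1/c_0,\rho_p/\tau^2]$. With that balance secured, the proof of Theorem~\ref{th:wellp} reduces to chaining the lemmas through Theorem~\ref{th:fredholm} and reading off the stability bound.
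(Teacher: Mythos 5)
Your reduction of existence and uniqueness to Lemmas~\ref{lem:K}, \ref{lem:A} and \ref{lem:A+K} via Theorem~\ref{th:fredholm} is exactly what the paper does, so that half of the argument matches. Where you genuinely diverge is in the stability estimate \eqref{eq:cont-dep}: you invoke the bounded-inverse (open mapping) theorem to control $\|\vec{\bx}\|_{\bbX}$ by $\|(\cA+\cK)^{-1}\|\,\|(F,G,M)\|_{\bbX'}$ and then apply \eqref{eq:func-bound}, whereas the paper re-derives an explicit energy estimate by testing \eqref{eq:weak} with $\bv=\bu$, $(q,\psi)=(p,\varphi)$, $\zeta=w$, mirroring Lemma~\ref{lem:A+K}, and then recovers $\|p\|_{0,\Omega}$ separately from the inf-sup condition of Lemma~\ref{lem:inf-sup}. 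Both routes are correct. Yours is shorter and uses a softer abstract principle, and since the theorem only asserts the existence of some $C>0$, it is perfectly sufficient. The paper's more laborious route has the advantage of making the dependence of $C$ on the physical parameters ($\rho_f,\mu,\gamma,c_0,\kappa,\rho_p,D,\alpha,\tau$) and on the inf-sup constant $\beta$ explicit, which is the template it re-uses almost verbatim to prove the discrete well-posedness in Theorem~\ref{th:disc-well-posed}, where no abstract bounded-inverse shortcut gives a constant uniform in $h$. One peripheral remark: the Young-parameter window $\frac{1}{c_0}\le\epsilon\le\frac{\rho_p}{\tau^2}$ that both you and Lemma~\ref{lem:A+K} rely on is non-empty only when $\tau^2\le c_0\rho_p$ (and requires $c_0>0$), an implicit constraint on the time step that neither you nor the paper states explicitly; it does not affect the correctness of your reduction, which simply cites the lemma.
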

\begin{proof}
The existence and uniqueness of $\vec{\bx}\in \bbX$ follows from Lemmas~\ref{lem:K},\ref{lem:A}, and \ref{lem:A+K} confirming the hypotheses of Theorem~\ref{th:fredholm}. The continuous dependence on data \eqref{eq:cont-dep} is carried out using similar steps as in the proof of Lemma~\ref{lem:A+K}.  That is, we use $\bv = \bu$, $(q,\psi)=(p,\varphi)$, and $\zeta = w$ as test functions in \eqref{eq:weak}. Combining the first and second resulting equations gives 
\[ a(\bu,\bu) + c_1((p,\varphi),(p,\varphi)) - [b_2+b_3]((p,\varphi),w)   = F(\bu) - G((p,\varphi)),\]
and using again the coercivity of $a(\cdot,\cdot)$, the positivity of $c_1(\cdot,\cdot)$, and the boundedness of the operators $F(\cdot)$ and $G(\cdot)$, we arrive at 
\begin{align*}
   & \min\{\frac{\rho_f}{\tau},\mu\} \|\bu\|_{1,\Omega}^2 +  \frac{c_0}{\tau} \|\varphi\|^2_{0,\Sigma} + \kappa\|\nabla_\Sigma \varphi\|^2_{0,\Sigma} \hspace{-0.1cm} - \hspace{-0.1cm} [b_2+b_3]((p,\varphi),w) \leq \|\ff\|_{0,\Omega} \|\bu\|_{1,\Omega} \hspace{-0.05cm} + \hspace{-0.05cm} \|g\|_{0,\Sigma}\|(p,\varphi)\| .
\end{align*}
On the other hand, using the third resulting equation 
\[[b_2-b_3]((p,\varphi),w) + c_2(w,w) = M(w),\] 
and invoking the coercivity of $c_2(\cdot,\cdot)$ as well as the continuity of the linear functional $M(\cdot)$, we can assert that 
\begin{equation*}
\frac{\rho_p}{\tau^3}\|w\|^2_{0,\Sigma} + \frac{D}{\tau} \|\nabla_\Sigma^2w\|^2_{0,\Sigma} + [b_2-b_3]((p,\varphi),w) \leq \|m\|_{0,\Sigma} \|w\|_{2,\Sigma}.\end{equation*}
Adding the previous two estimates and using the boundedness of $b_3(\cdot,\cdot)$ together with Young's inequality (requiring again $\epsilon$ such that $\frac{1}{c_0} < \epsilon < \frac{\rho_p}{\tau^2}$), we obtain the bound 
\begin{equation}\label{eq:aux09}  \|\bu\|_{1,\Omega} + \|\varphi\|_{1,\Sigma} + \|w\|_{2,\Sigma} \leq C(\|\ff\|_{0,\Omega} + \|g\|_{0,\Sigma} + \|m\|_{0,\Sigma} ).\end{equation}
Finally, we use again the first resulting equation, the inf-sup condition for $b_1(\cdot,\cdot)$, and the boundedness of $a(\cdot,\cdot)$, to end up with 
\begin{align*}
\beta \|p\|_{0,\Omega} \leq \beta  \| (p,\varphi)\|  &\leq \sup_{\bv\in \bH^1_\star(\Omega)\setminus\{\cero\}} \frac{b_1(\bv,(p,\varphi))}{\|\bv\|_{1,\Omega}} \\
&  = \sup_{\bv\in \bH^1_\star(\Omega)\setminus\{\cero\}} \frac{F(\bu) - a(\bu,\bv)}{\|\bv\|_{1,\Omega}} \leq \|\ff\|_{0,\Omega} + \max\{\frac{\rho_f}{\tau},\mu,C_T^2\gamma\} \|\bu\|_{1,\Omega},\end{align*}
which, together with \eqref{eq:aux09}, implies the desired result. 
\end{proof}

\section{Virtual element framework}\label{sec:vem}
This section aims to introduce the conforming \ac{vem} spaces for \eqref{eq:weak}, specifying appropriate \acp{dof} along with the definition of suitable polynomial projection operators that guarantee the computability of the method.

\paragraph{Admissible meshes} Let $\Omega$ be a contractible polyhedron with Lipschitz boundary $\partial\Om$ and $\{\Om_h\}_{h>0}$ be a sequence of decompositions of $\Om$ into general polyhedral elements $K$ of diameter $h_K$  with the mesh-size $h:=\sup_{K\in\Om_h}h_K$. This consequently provides the polygonal decomposition $\Gamma_h$ of the part of the boundary $\Gamma$ and $\Sigma_h$ of the surface $\Sigma$. 
\par Let $\cF_h$ (resp. $\cE_h$)  denote the set of the faces (resp. edges) of the polyhedral decomposition $\Om_h$. The following mesh assumptions are considered throughout this paper. We assume that there exists a universal constant $\rho>0$ such that
\begin{enumerate}[label={(\bfseries M\arabic*)}]
    \item\label{M1} each element $K$ is star-shaped with respect to a ball of radius greater than equal to $\rho h_K$,
    \item\label{M2} every face $F$  of $K$ is of diameter $h_F$ and is star-shaped with respect to a disk of radius greater than equal to $\rho h_K$,
    \item\label{M3} every edge $e$ of $K$ has length $h_e\geq \rho h_K$. 
\end{enumerate}
The local (resp. global) number of vertices, edges and faces are denoted by $N_v^K, N_e^K, N_f^K$ (resp. $N_v, N_e, N_f$). An important consequence of the mesh assumptions is that the global number of faces $N_f$ can be uniformly bounded by $N_\delta \geq 4$ in 3D depending only on $\rho$  (refer to \cite[Lemma~1.12]{di2020hybrid} for a proof).  Let $\rP_r(\cD)$ (resp. $\bP_r(\cD), \bbP_r(\cD)$) denote the space of scalar (resp. vector, tensor) valued polynomials of degree $\leq r$ on a domain $\cD$. 
Note that here the domain $\cD$ can be of dimension $1,2$ or $3$. We use the following polynomial decomposition  on each polyhedron $K$ 
\[
    \bP_r(K) = \nabla_K \rP_{r+1}(K) \oplus \bG_r^\perp(K),
\]
where the complement $\bG_r^\perp(K)$ contains the polynomials of the form $\bx^\perp\rP_{r-1}(K)$ with $\bx^\perp:=(x_2,-x_1)$ for $d=2$ and  $\bx\wedge \bP_{r-1}(K)$ for $d=3$.  Let $\bx_{\cD}$ and $h_\cD$ be the barycentre and the diameter of a domain $\cD$ respectively. Define the space of scaled monomials of degree $\leq k$ for $k\geq 0$ on a domain $\cD$ as
\[\rM_k(\cD) = \left\{\Big(\frac{\bx-\bx_\cD}{h_\cD}\Big)^{\bt}: 0\leq |\bt|\leq k\right\}\]
for a multi-index $\bt = (t_1,\dots,t_d)$ with $|\bt|:=t_1+\dots+t_d$. The notation $\bM_k(\cD)$ (resp. $\bbM_k(\cD)$) stand for vector and tensor valued scaled monomials. Also the space $\bM_k^\perp(\cD)$ is the scaled basis of the complement $\bG_k^\perp(\cD)$ such that $\nabla_\cD\rM_{k+1}(\cD)\oplus\bM_k^\perp(\cD)$ forms a scaled basis of $\bP_k(\cD)$.  It is well-known in \ac{vem} literature that the \acp{dof} scale like $\approx 1$ and hence the scaled monomials are considered as the basis of any polynomial space involved in the definition of the \acp{dof}.
\paragraph{Polynomial projections}
For each polyhedron or face $\cD$, we introduce the following projections:
\begin{itemize}
    \item  For any $k\geq 0$, the $\rL^2$  projection $\Pi_k^{0,\cD}: \rL^2(\cD)\to\rP_k(\cD)$ is defined, for any $v\in \rL^2(\cD)$, by
    \begin{align}\label{L2-proj}
        \int_\cD \Pi_k^{0,\cD}v \chi_k = \int_\cD v\chi_k\quad\text{for all}\;\chi_k\in\rP_k(\cD)
    \end{align}
    with analogous definitions of $\bPi_k^{0,\cD}$ and $\bbPi_k^{0,\cD}$ for vector and tensor functions.
    \item  For any $k\geq 1$, the $\rH^1$-seminorm projection $\Pi_k^{\nabla,\cD}: \rH^1(\cD)\to\rP_k(\cD)$ is defined, for any $v\in \rH^1(\cD)$, by
    \begin{subequations}\label{nabla_proj}
    \begin{align}
        \int_\cD \nabla\Pi_k^{\nabla,\cD}v\cdot \nabla\chi_k &= \int_\cD \nabla v\cdot\nabla\chi_k\quad\text{for all}\;\chi_k\in\rP_k(\cD),\\
        \int_{\partial\cD}\nabla\Pi_k^{\nabla,\cD}v &=  \int_{\partial\cD}\nabla v
    \end{align}\end{subequations}
    with analogous definition of $\bPi_k^{\nabla,\cD}$ for vector functions. 
    \item For any $k\geq 2$, the $\rH^2$-seminorm projection $\Pi_k^{\nabla^2,\cD}: \rH^2(\cD)\to\rP_k(\cD)$ is defined, for any $v\in \rH^2(\cD)$, by
    \begin{subequations}\label{nabla2_proj}
    \begin{align}
        \int_\cD \nabla^2\Pi_k^{\nabla^2,\cD}v : \nabla^2\chi_k &= \int_\cD \nabla^2 v:\nabla^2\chi_k\quad\text{for all}\;\chi_k\in\rP_k(\cD),\\
        \overline{\Pi_k^{\nabla^2,\cD}v} = \overline{v} \quad&\text{and}\quad \overline{\nabla\Pi_k^{\nabla^2,\cD}v} = \overline{\nabla v},
        \end{align}\end{subequations}
        where 
        is the Hessian matrix (of second-order derivatives) and $\overline{v}$ is the average $\frac{1}{N_K}\sum_{i=1}^{N_v^K}v(z_i)$ of the values of $v$ at the vertices $z_i$ of $K$.
\end{itemize}
\paragraph{Virtual element spaces for the 3D Stokes equations}
The definition of the enhanced 3D VE space in this paper is followed from \cite{beirao20}. We set the polynomial degree $k\geq 2$ in the rest of this paper.  For each face $F\in\partial K$, the VE space $\rB^k(F)$  locally solves the Poisson equation with Dirichlet boundary conditions and is defined by 
\begin{align*}
    \rB^k(F):=\begin{dcases}
    \begin{rcases}
        v\in \rH^1(F):& v|_{\partial F}\in C^0(\partial F),\;v|_e\in\rP_k(e)\quad\text{for all}\;e\in\partial F, \;\\ &\Delta_F v\in\rP_{k+1}(F),\;(v-\Pi_k^{\nabla,F}v,\chi_{k+1})_F = 0\;\\&\text{for all}\;\chi_{k+1}\in\rP_{k+1}(F)\setminus\rP_{k-2}(F)
    \end{rcases}
\end{dcases}
\end{align*}
with $\bB^k(F)=[\rB^k(F)]^3$ and  the boundary space $\bB^k(\partial K)=[\rB^k(\partial K)]^3$ for
\[\rB^k(\partial K):=\{v\in C^0(\partial K): v|_F\in\rB^k(F)\quad\text{for all}\;F\in\partial K\}.
\]
First we define an extended local VE space $\tV_h(K)$  for each $K\in\Om_h$ as
\begin{align*}
    \tV_h^k(K):=\begin{dcases}
        \begin{rcases}
            \bv\in \bH^1(K): &\bv|_{\partial K}\in \bB^k(\partial K),\;\Delta \bv+\nabla s\in \bG_k^\perp(K)\\&\text{for some}\;s\in \rL^2_0(K),\;\vdiv \bv\in\rP_{k-1}(K)
        \end{rcases},
    \end{dcases}
\end{align*}
and the  enhanced local VE  space $\bV_h(K)$ with additional orthogonality condition as
\[
    \bV_h^k(K):=\begin{dcases}
        \begin{rcases}
            \bv\in\tV_h^k(K): (\bv-\bPi_k^{\nabla,K}\bv,\bg_k^\perp)_{0,K} =0\quad\text{for all}\; \bg_k^\perp\in \bG_k^\perp(K)\setminus \bG_{k-2}^\perp(K)
        \end{rcases}.
    \end{dcases}
\]
The global VE space is defined by
\[\bV_h^k:=\{\bv_h\in \bH^1_\star(\Omega): \bv_h|_K\in\bV_h^k(K)\quad\text{for all}\;K\in\Om_h\}. \]
The \acp{dof} for $\bv\in\bV_h^k(K)$ and  $K\in\Om_h$ can be taken as follows:
\begin{enumerate}[label={(\bfseries D\arabic*$\bv$)}]
    \item\label{D1v} Edge moments: the values of $\bv$ at the vertices of $K$ and at $k-1$ distinct points on every edge $e$ of $K$,
    \item\label{D2v} Face moments: the normal and tangential components of $\bv$ as
    \[\frac{1}{h_F}\int_F (\bv\cdot\bn_K^F)\chi_{k-2}\quad\text{and}\quad \frac{1}{h_F}\int_F \bv|_F\cdot\bchi_{k-2}\]
    for all $\chi_{k-2}\in\cM_{k-2}(F)$ and $\bchi_{k-2}\in[\cM_{k-2}(F)]^2$.
    \item\label{D3v} Cell moments: the curl and div part of $\bv$ as
    \[\frac{1}{h_K}\int_K\bv\cdot \bchi_{k-2}^\perp\quad\text{and}\quad \frac{1}{h_K}\int_K(\vdiv \bv)\chi_{k-1}\]
    for all $\bchi_{k-2}^\perp\in\bM_{k-2}^\perp(K)$ and $\chi_{k-1}\in\rM_{k-1}(K)\setminus\mathbb{R}$. 
\end{enumerate}
It is shown in \cite[Proposition~3.1]{beirao20} that the above set of \acp{dof} is unisolvent for the space $\bV_h^k(K)$.
\begin{lemma}[computable projections on $\bV_h^k$]
   The $\bH^1$ face and element projections $\bPi_k^{\nabla,F}:\bB^k(F)\to\bP_k(F)$ and $\bPi_k^{\nabla,K}:\bV_h^k(K)\to\bP_k(K)$, respectively, also the $\bL^2$ face projection  $\bPi_{k+1}^{0,F}:\bB^k(F)\to\bP_{k+1}(F)$ and the $\bL^2$ element projections $\bPi_k^{0,K}:\bV_h^k(K)\to\bP_k(K)$ along with  $\bbPi_{k-1}^{0,K}:\nabla(\bV_h^k(K))\to\bbP_{k-1}(K)$ are computable in terms of \acp{dof} \ref{D1v}-\ref{D3v}.
\end{lemma}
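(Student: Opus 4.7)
The plan is to verify each projection in turn, combining integration by parts with the enhancement conditions built into $\rB^k(F)$ and $\bV_h^k(K)$, and checking that every auxiliary integral reduces to one of the moments listed in \ref{D1v}--\ref{D3v}. I would proceed in an order that makes later projections depend only on earlier ones, namely $\bPi_k^{\nabla,F}\to\bPi_k^{\nabla,K}\to\bPi_{k+1}^{0,F}\to\bPi_k^{0,K}\to\bbPi_{k-1}^{0,K}$.

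For the face projection $\bPi_k^{\nabla,F}$, integration by parts on each face $F$ rewrites $\int_F \nabla \bv \cdot \nabla \bchi_k = -\int_F \bv\cdot \Delta_F \bchi_k + \int_{\partial F} \bv\cdot (\nabla \bchi_k\, \bn_{\partial F})$ for an arbitrary $\bchi_k \in \bP_k(F)$. Since $\Delta_F \bchi_k \in \bP_{k-2}(F)$, the volume term is controlled by the face moments \ref{D2v}, while the boundary integral decomposes into edge contributions fully determined by \ref{D1v}, because $\bv|_{\partial F}$ is a piecewise polynomial of degree $k$ by the definition of $\bB^k(F)$. The average condition in \eqref{nabla_proj} then fixes the constant. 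The element projection $\bPi_k^{\nabla,K}$ uses the analogous three-dimensional identity, producing boundary integrals on each $F\in\partial K$ and a volume integral against $\bDelta\bchi_k\in \bP_{k-2}(K)$. The face contributions involve pairings of $\bv|_F$ with polynomials of degree $k-1$ on $F$, which are expressible through \ref{D1v}--\ref{D2v}; for the volume piece I decompose $\bP_{k-2}(K)=\nabla \rP_{k-1}(K)\oplus \bG_{k-2}^\perp(K)$, so that the gradient part collapses after another integration by parts into the divergence moments $(\vdiv\bv)\chi_{k-1}$ of \ref{D3v} plus boundary traces, and the complementary part is read directly off the curl moments in \ref{D3v}.

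The $\rL^2$ projections $\bPi_{k+1}^{0,F}$ and $\bPi_k^{0,K}$ are precisely where the enhancement conditions come into play. For $\bPi_{k+1}^{0,F}$, moments of $\bv$ against $\rP_{k-2}(F)$ are immediate from \ref{D2v}; moments against $\rP_{k+1}(F)\setminus\rP_{k-2}(F)$ are replaced by moments of the already-computable $\bPi_k^{\nabla,F}\bv$ using the enhancement identity $(v-\Pi_k^{\nabla,F}v,\chi_{k+1})_F=0$, after which only polynomial integrals remain. An analogous argument is carried out on $K$: moments against $\bG_{k-2}^\perp(K)$ come from \ref{D3v}; moments against $\nabla \rP_k(K)$ reduce by integration by parts to face traces of $\bv$ and the $\vdiv\bv$ moments of \ref{D3v}; and the residual moments against $\bG_k^\perp(K)\setminus\bG_{k-2}^\perp(K)$ are exchanged for the corresponding moments of $\bPi_k^{\nabla,K}\bv$ via the orthogonality condition defining $\bV_h^k(K)$.

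Finally, the tensor projection $\bbPi_{k-1}^{0,K}$ applied to $\nabla\bv$ is obtained by integrating by parts against an arbitrary tensor test in $\bbP_{k-1}(K)$, producing a volume pairing of $\bv$ with a vector of $\bP_{k-2}(K)$ (handled by the just-constructed $\bPi_k^{0,K}$) and a boundary pairing of $\bv$ with a tensor contracted with $\bn_K^F$ (handled by \ref{D1v}--\ref{D2v} via the face traces of $\bv$). I expect the main technical care to lie not in any individual computation, but in the polynomial-degree bookkeeping: at every use of integration by parts one must confirm that the resulting Laplacians, divergences, or complementary components land in exactly the subspace controlled by the available \acp{dof} or by the enhancement orthogonality. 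This alignment is what dictates the choices of degrees $k-2$ and $k-1$ in \ref{D2v}--\ref{D3v}, and it is the one place where a slip would undermine computability.
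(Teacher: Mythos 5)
The paper itself gives no proof of this lemma --- it cites \cite[Proposition~5.1]{beirao20} --- so your attempt is necessarily a reconstruction. Most of your mechanism (integration by parts, the $\nabla\rP\oplus\bG^\perp$ splitting, the use of the face and element enhancement orthogonalities, the final reduction of $\bbPi_{k-1}^{0,K}\nabla\bv$) is the right idea, but your dependency chain $\bPi_k^{\nabla,F}\to\bPi_k^{\nabla,K}\to\bPi_{k+1}^{0,F}\to\cdots$ is circular, and the step that breaks is the element gradient projection. Integrating by parts on $K$ gives the boundary term $\int_{\partial K}\bv\cdot(\nabla\bchi_k)\bn_K$, which on each face $F$ pairs $\bv|_F$ against $(\nabla\bchi_k)\bn_K^F\in\bP_{k-1}(F)$; likewise, the gradient part of $\Delta\bchi_k$ produces, after the second integration by parts, a face term $\int_F(\bv\cdot\bn_K^F)\,q_{k-1}$ with $q_{k-1}\in\rP_{k-1}(K)$. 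In both cases you need face moments of $\bv$ against polynomials of degree $k-1$, but \ref{D2v} only controls moments against $\cM_{k-2}(F)$, and \ref{D1v} only fixes $\bv$ on $\partial F$. Your claim that these face contributions are ``expressible through D1v--D2v'' is therefore false: the missing degree-$(k-1)$ face moments are exactly what the enhancement built into $\rB^k(F)$ supplies, i.e.\ they come from first computing $\bPi_k^{\nabla,F}$ and then $\bPi_{k+1}^{0,F}$ on each face. The correct order is $\bPi_k^{\nabla,F}\to\bPi_{k+1}^{0,F}\to\bPi_k^{\nabla,K}\to\bPi_k^{0,K}\to\bbPi_{k-1}^{0,K}$.

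The same slip recurs twice later. For $\bPi_k^{0,K}$ you split off $\nabla\rP_k(K)$, but the decomposition of $\bP_k(K)$ the paper uses is $\nabla\rP_{k+1}(K)\oplus\bG_k^\perp(K)$; the resulting face term $\int_F(\bv\cdot\bn_K^F)\,\chi_{k+1}$ again exceeds the \ref{D2v} degree and must be read from $\bPi_{k+1}^{0,F}\bv$, while the volume term $\int_K(\vdiv\bv)\chi_{k+1}$ is a genuine polynomial integral only because $\vdiv\bv\in\rP_{k-1}(K)$ is fully reconstructable from \ref{D3v} together with the (computable) boundary flux $\int_{\partial K}\bv\cdot\bn_K$. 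Similarly, in $\bbPi_{k-1}^{0,K}$ the boundary pairing $\int_F\bv\cdot(\bchi_{k-1}\bn_K^F)$ has degree $k-1$ on $F$ and is not covered by \ref{D1v}--\ref{D2v} alone. Once you reroute these three face integrals through $\bPi_{k+1}^{0,F}$, the rest of your degree bookkeeping holds up.
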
 
\begin{proof}
    The detailed proof can be found in \cite[Proposition~5.1]{beirao20}.
\end{proof}
The  local discrete pressure space $\rQ_h^{k-1}(K)$ is nothing but the space of polynomials $\rP_{k-1}(K)$ and  we can take the \acp{dof} as
\begin{enumerate}[label={({\bfseries D\arabic*}$q$)}]
    \item Cell moments: For any $q\in \rQ_h^{k-1}(K)$ and for all $\chi_{k-1}\in\rM_{k-1}(K)$,
    \[\frac{1}{h_K}\int_K q\;\chi_{k-1}.\]
\end{enumerate}
The global discrete pressure space is the set of piecewise polynomials of degree $\leq k-1$ on $\Om_h$, that is,
\[\rQ_h^{k-1}:=\{q_h\in \rL^2(\Omega): q_h|_K\in \rQ_h^{k-1}(K)\quad\text{for all}\;K\in\Om_h\}.\]
\paragraph{Virtual element spaces for the 2D poroelastic plate equations}
To approximate the displacement space, we consider the local conforming VE space $\rW_h^k(F)$  as a set of solutions to a biharmonic problem over $F$ with clamped boundary conditions on $\partial F$, and  it is defined, for $k\geq 2$ and $r=\max\{k,3\}$, by 
	\[
	\rW_h^k(F):= \begin{dcases}
		\begin{rcases}
			&\hspace{-0.35cm}w  \in H^2(F)\cap C^1(\partial F) : \Delta^2_\Sigma w \in\rP_k(F),\; w|_e\in \rP_r(e)\;\text{and}\;\nabla_\Sigma w|_e\cdot\bn^e_F\in\rP_{k-1}(e)\\&\qquad\forall\;e\in\partial F,\;\text{and}\; (w-\Pi_k^{\nabla^2,F}w,\chi_k)_F=0\quad\forall\;\chi_k\in\rP_k(F)\setminus\rP_{k-4}(F)
		\end{rcases}.
	\end{dcases}
	\]
 The following \acp{dof} are unisolvent for the space $\rW_h^k(F)$. For any $w\in \rW_h^k(F)$,
 \begin{enumerate}[label={({\bfseries D\arabic*}$w$)}]
     \item\label{D1w} Edge moments: 
     \begin{itemize}
         \item The values $w(z)$ and $h_z \nabla w(z)$ ($h_z$ is characteristic length associated with $z$) of zero and first derivatives of $w$ at the vertices $z$ of $F$, 
         \item $\displaystyle \frac{1}{h_e}\int_e\partial_{\bn} w\;\chi_{k-3}\quad\forall\;\chi_{k-3}\in\rM_{k-3}(e),\;e\in\partial F$,
         \item $\displaystyle \frac{1}{h_e}\int_e w\;\chi_{k-4}\quad\forall\;\chi_{k-4}\in\rM_{k-4}(e),\;e\in\partial F$,
     \end{itemize}  
     \item\label{D2w} Face moments: 
     \[\frac{1}{h_F}\int_F w\;\chi_{k-4}\quad\forall\;\chi_{k-4}\in\rM_{k-4}(F).\]
 \end{enumerate}
 The global displacement VE space  is defined by
 \[\rW_h^k:=\{w_h\in \rH^2_0(\Sigma) : w_h|_F\in \rW_h^k(F)\quad\text{for all}\;F\in\Sigma_h\}.\]
 
 \begin{lemma}[computable projections on $W_h^k$ \cite{khot23}]
     The $\rH^2$-projection $\Pi_k^{\nabla^2,F}:\rW_h^k(F)\to\rP_k(F)$ and the $\rL^2$-projections $\Pi_k^{0,F}:\rW_h^k(F)\to\rP_k(F)$  together with $\bPi_{k-1}^{0,F}:\nabla_\Sigma(\rW_h^k(F))\to\bP_{k-1}(F)$ are computable in terms of the \acp{dof} \ref{D1w}-\ref{D2w}.
 \end{lemma}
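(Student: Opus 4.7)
The plan is to establish computability in the order $\Pi_k^{\nabla^2,F}\to \Pi_k^{0,F}\to \bPi_{k-1}^{0,F}$, each step feeding into the next via integration by parts and the enhancement condition built into $\rW_h^k(F)$.

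First, for the Hessian projection, I would start from the defining relation \eqref{nabla2_proj} and integrate by parts twice on $F$, yielding
\[
(\nabla^2 w,\nabla^2\chi_k)_F \;=\; (w,\Delta_\Sigma^2\chi_k)_F \;-\; \sum_{e\in\partial F}\Bigl[(\partial_{\bn}w,\Delta_\Sigma\chi_k)_e - (w,\partial_{\bn}\Delta_\Sigma\chi_k)_e\Bigr] + \text{corner contributions},
\]
together with tangential-derivative terms on each edge. Because $w|_e\in\rP_r(e)$ and $\nabla_\Sigma w|_e\cdot\bn_F^e\in\rP_{k-1}(e)$, all the edge integrals involve a polynomial on $e$ paired against a polynomial coming from $\chi_k$, and are therefore computable from the vertex values, the normal-derivative moments and the edge moments of $w$ listed in \ref{D1w}. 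The volumetric contribution $(w,\Delta_\Sigma^2\chi_k)_F$ is a pairing of $w$ with an element of $\rP_{k-4}(F)$, which is exactly what the face moments \ref{D2w} provide. Finally, the average conditions in \eqref{nabla2_proj} fix the two remaining polynomial degrees of freedom (the constant and the linear part) through the nodal values $w(z_i)$ and $\nabla w(z_i)$. Hence $\Pi_k^{\nabla^2,F}w$ is fully determined by \ref{D1w}–\ref{D2w}.

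For $\Pi_k^{0,F}w$ I would decompose the test space as $\rP_k(F)=\rP_{k-4}(F)\oplus\bigl(\rP_k(F)\setminus\rP_{k-4}(F)\bigr)$. For $\chi_{k-4}\in\rP_{k-4}(F)$ the integral $(w,\chi_{k-4})_F$ is computable directly via \ref{D2w}. For the complementary part $\chi_k^\perp\in\rP_k(F)\setminus\rP_{k-4}(F)$, the enhancement identity
\[
(w,\chi_k^\perp)_F \;=\; (\Pi_k^{\nabla^2,F}w,\chi_k^\perp)_F,
\]
built into the definition of $\rW_h^k(F)$, reduces the computation to the already-known polynomial $\Pi_k^{\nabla^2,F}w$. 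Together these two cases determine $\Pi_k^{0,F}w\in\rP_k(F)$ uniquely.

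For the $\bL^2$-projection of the tangential gradient, given $\bchi_{k-1}\in\bP_{k-1}(F)$ I would integrate by parts on $F$:
\[
(\nabla_\Sigma w,\bchi_{k-1})_F \;=\; -(w,\vdiv_\Sigma\bchi_{k-1})_F + \sum_{e\in\partial F}(w,\bchi_{k-1}\cdot\bn_F^e)_e.
\]
The boundary terms pair the edge polynomial $w|_e\in\rP_r(e)$ with a polynomial trace, and are computable from \ref{D1w}. The remaining volumetric term is of the form $(w,p)_F$ with $p=\vdiv_\Sigma\bchi_{k-1}\in\rP_{k-2}(F)\subset\rP_k(F)$, which is computable through the previously established $\Pi_k^{0,F}$ via $(w,p)_F=(\Pi_k^{0,F}w,p)_F$. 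This yields $\bPi_{k-1}^{0,F}(\nabla_\Sigma w)$.

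The main obstacle, and the reason the order of the argument matters, is the circular-looking dependence between $\Pi_k^{\nabla^2,F}$ and $\Pi_k^{0,F}$: the biharmonic Green's identity produces a volumetric pairing of $w$ with a polynomial of degree exactly $k$ in general, which cannot be read from \ref{D2w} alone. The enhancement condition $(w-\Pi_k^{\nabla^2,F}w,\chi_k)_F=0$ for $\chi_k\in\rP_k(F)\setminus\rP_{k-4}(F)$ is precisely the mechanism that breaks this circularity once $\Pi_k^{\nabla^2,F}$ is computed first using only the $\Delta_\Sigma^2\chi_k\in\rP_{k-4}(F)$ part in the interior. Beyond this point all remaining computations are routine manipulations with polynomials on edges and on $F$.
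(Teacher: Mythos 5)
The paper does not give a proof of this lemma; it is cited directly from \cite{khot23}, and your reconstruction follows the same standard computability argument for enhanced biharmonic VE spaces used there. The three-stage ordering $\Pi_k^{\nabla^2,F}\to\Pi_k^{0,F}\to\bPi_{k-1}^{0,F}$, the observation that $\Delta_\Sigma^2\chi_k\in\rP_{k-4}(F)$ makes the interior pairing readable from \ref{D2w}, and the use of the enhancement condition to extend the $\rL^2$-projection to degree $k$ are exactly the right ingredients, so the proposal is correct and essentially coincides with the cited proof.
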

 To approximate the pressure space, one can consider the space $\rB^k(F)$, but note that, it is a super-enhanced space defined particularly to have the computable $\rL^2$ face projection $\Pi_{k+1}^{0,F}$ at hand.  Since here only computable $\rL^2$-projection $\Pi_\ell^{0,F}$ is required, we define the local enhanced space, for $\ell\geq 1$,  as
 \[\rR_h^\ell(F):= \begin{dcases}
	\begin{rcases}
		&r \in \rH^1(F)\cap C^0(\partial F): \Delta_\Sigma r \in \rP_\ell(F),\; r|_e\in\rP_\ell(e)\quad\forall\;e\in\partial F,\\&\qquad\text{and}\; (r-\Pi^{\nabla,F}_\ell r,\chi_{\ell})_F=0\quad\forall\;\chi_\ell\in\rP_\ell(F)\setminus\rP_{\ell-2}(F)
	\end{rcases}.
\end{dcases}
\]
The following \acp{dof}  are defined such that the triplet $(F,\rR_h^{\ell}(F),\{\text{\ref{D1r}-\ref{D2r}}\})$ 
forms a finite element in the sense of Ciarlet. For any $r\in \rR_h^\ell(F)$,
\begin{enumerate}[label={({\bfseries D\arabic*}$r$)}]
    \item \label{D1r} Edge moments: values of $r$ at the vertices $z$ of $F$ and at the $\ell-1$ distinct points on each edge $e$ of $F$,
    \item\label{D2r} Face moments: 
    \[\frac{1}{h_F}\int_F r\;\chi_{\ell-2}\quad\forall\chi_{\ell-2}\in\cM_{\ell-2}(F).\]
\end{enumerate}
The global pressure VE space is defined by 
\[\rR_h^\ell:=\{r_h\in \rH^1_0(\Omega): r_h|_F\in \rR_h^\ell(F)\quad\text{for all}\;F\in\Sigma_h\}.\]

\begin{lemma}[computable projections on $R_h^\ell$]
    The $\rH^1$-projection $\Pi_\ell^{\nabla,F}:\rR_h^\ell(F)\to\rP_\ell(F)$ and the $\rL^2$-projections $\Pi_\ell^{0,F}:\rR_h^\ell(F)\to\rP_\ell(F)$ along with $\bPi_{\ell-1}^{0,F}:\nabla_{\Sigma}(\rR_h^\ell(F))\to\bP_{\ell-1}(F)$ are computable in terms of the \acp{dof} \ref{D1r}-\ref{D2r}.
\end{lemma}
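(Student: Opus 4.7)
The plan is to verify computability of the three projections by combining Green's identities with the enhancement condition built into the definition of $\rR_h^\ell(F)$. The key observation is that every DOF in \ref{D1r}–\ref{D2r} determines either the trace of $r$ on $\partial F$ (up to polynomials of degree $\ell$ on each edge) or the moments $\int_F r\,\chi_{\ell-2}$ for $\chi_{\ell-2}\in\rM_{\ell-2}(F)$; any quantity that can be written in terms of these two categories is therefore computable.

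First, I would handle $\Pi_\ell^{\nabla,F}$. Integrating by parts the defining identity \eqref{nabla_proj} gives, for each $\chi_\ell\in\rP_\ell(F)$,
\[
\int_F \nabla_\Sigma \Pi_\ell^{\nabla,F} r\cdot\nabla_\Sigma\chi_\ell \;=\; -\int_F r\,\Delta_\Sigma\chi_\ell \;+\; \int_{\partial F} r\,\nabla_\Sigma\chi_\ell\cdot\bn^e_F.
\]
Since $\Delta_\Sigma\chi_\ell\in\rP_{\ell-2}(F)$, the first term on the right is computable from the face moments \ref{D2r}, while the boundary term is a sum of edge integrals of $r|_e\in\rP_\ell(e)$ against a polynomial, fully determined by the vertex values and the $\ell-1$ edge samples in \ref{D1r}. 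The standard closure condition (average or vertex values) needed to pin down $\Pi_\ell^{\nabla,F}r$ modulo constants is also computable from \ref{D1r}.

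Next, for $\Pi_\ell^{0,F}$, I would use the enhancement identity $(r-\Pi_\ell^{\nabla,F}r,\chi_\ell)_F=0$ for all $\chi_\ell\in\rP_\ell(F)\setminus\rP_{\ell-2}(F)$. Splitting an arbitrary $\chi_\ell\in\rP_\ell(F)$ as $\chi_\ell=\chi_{\ell-2}+\chi_\ell^{\sharp}$ with $\chi_\ell^{\sharp}\in\rP_\ell(F)\setminus\rP_{\ell-2}(F)$, the low-order part yields $(r,\chi_{\ell-2})_F$ which is a direct combination of the face moments \ref{D2r}, whereas the enhancement condition gives $(r,\chi_\ell^{\sharp})_F=(\Pi_\ell^{\nabla,F}r,\chi_\ell^{\sharp})_F$, a polynomial–polynomial pairing that is computable once $\Pi_\ell^{\nabla,F}r$ is available from the previous step. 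Hence $\Pi_\ell^{0,F}r$ is fully determined.

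Finally, for $\bPi_{\ell-1}^{0,F}$ applied to $\nabla_\Sigma r$, I would use integration by parts in the opposite direction: for each test polynomial $\bchi_{\ell-1}\in\bP_{\ell-1}(F)$,
\[
\int_F \bPi_{\ell-1}^{0,F}(\nabla_\Sigma r)\cdot\bchi_{\ell-1} \;=\; \int_F \nabla_\Sigma r\cdot\bchi_{\ell-1} \;=\; -\int_F r\,\vdiv_\Sigma\bchi_{\ell-1} \;+\; \int_{\partial F} r\,\bchi_{\ell-1}\cdot\bn^e_F.
\]
The volumetric term involves $\vdiv_\Sigma\bchi_{\ell-1}\in\rP_{\ell-2}(F)$ and is computable from \ref{D2r}; the boundary term is again an edge integral of $r|_e\in\rP_\ell(e)$ against a polynomial, obtained from \ref{D1r}. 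The main technical point — and the only step that requires some care — is verifying that the enhancement threshold $\rP_{\ell-2}(F)$ in the definition of $\rR_h^\ell(F)$ is exactly what is needed for the high-order polynomial part of $\Pi_\ell^{0,F}$ to fall within the reach of $\Pi_\ell^{\nabla,F}$; this matching of polynomial degrees between the enhancement condition and the DOF moments is what makes the three projections simultaneously computable.
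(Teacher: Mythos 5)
The paper states this lemma without a proof (it is the standard enhancement argument, assumed known from the VEM literature), so there is nothing to compare against word-for-word; your job here was to reconstruct that omitted argument, and you have done so correctly. The chain of reasoning — compute $\Pi_\ell^{\nabla,F}$ first by integration by parts (noting $\Delta_\Sigma\chi_\ell\in\rP_{\ell-2}(F)$ so the volume term is covered by \ref{D2r} and the boundary term by \ref{D1r} since $r|_e\in\rP_\ell(e)$ is determined by $\ell+1$ nodal values per edge), then use the enhancement condition to recover the moments $\int_F r\,\chi_\ell$ for degrees $\ell-1$ and $\ell$ from the already-known polynomial $\Pi_\ell^{\nabla,F}r$, and finally an integration by parts against $\bchi_{\ell-1}\in\bP_{\ell-1}(F)$ with $\vdiv_\Sigma\bchi_{\ell-1}\in\rP_{\ell-2}(F)$ — is exactly the mechanism the authors rely on implicitly, and the degree bookkeeping you flag at the end (face DOFs reach degree $\ell-2$, enhancement covers degrees $\ell-1$ and $\ell$, together exhausting $\rP_\ell(F)$) is precisely the design constraint that makes the space work. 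One small caveat: the closure condition for $\Pi_\ell^{\nabla,F}$ as printed in \eqref{nabla_proj} reads $\int_{\partial\cD}\nabla\Pi_k^{\nabla,\cD}v=\int_{\partial\cD}\nabla v$, which taken literally is not computable from the \acp{dof} (the normal derivative on $\partial F$ is not a \ac{dof} of $\rR_h^\ell(F)$); the standard and intended condition is $\int_{\partial\cD}\Pi_k^{\nabla,\cD}v=\int_{\partial\cD}v$, which is what you correctly assume. Your remark "(average or vertex values)" silently fixes what appears to be a typo in the paper, and that is the right call.
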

\paragraph{Discrete bilinear forms}
With $\ell = k-1$, we set  the global discrete operators $A_h:\bV_h^k\to[\bV_h^k]'$, $B_{1h}:\bV_h^k\to[\rQ_h^{k-1}\times \rR_h^{k-1}]'$, $C_{1h}:\rQ_h^{k-1}\times \rR_h^{k-1}\to [\rQ_h^{k-1}\times \rR_h^{k-1}]'$, $B_{2h},B_{3h}:\rQ_h^{k-1}\times \rR_h^{k-1}\to[\rW_h^k]'$ and $C_{2h}:\rW_h^k\to[\rW_h^k]'$ as the summation over local discrete counterparts. In particular, for any $(\bu_h,(p_h,\varphi_h),w_h)$ and $(\bv_h,(q_h,\psi_h),\zeta_h)$ in $ \bV_h^k\times(\rQ_h^{k-1}\times \rR_h^{k-1})\times \rW_h^k$,
\begin{align*}
   \langle \bA_h\bu_h,\bv_h\rangle = a_h(\bu_h,\bv_h)&=\sum_{K\in\Om_h} a_h^K(\bu_h,\bv_h),\\
   \langle \bB_{1h}\bv_h,(q_h,\psi_h)\rangle =b_{1h}(\bv_h,(q_h,\psi_h))&=\sum_{K\in\Om_h} b_{1h}^K(\bv_h,(q_h,\psi_h)) , \\
\langle \bC_{1h}(p_h,\varphi_h),(q_h,\psi_h)\rangle =c_{1h}((p_h,\varphi_h),(q_h,\psi_h))&=\sum_{F\in\Sigma_h} c_{1h}^F((p_h,\varphi_h),(q_h,\psi_h)), \\
\langle \bB_{2h}(q_h,\psi_h),\zeta_h\rangle =b_{2h}((q_h,\psi_h),\zeta_h)&=\sum_{F\in\Sigma_h} b_{2h}^F((q_h,\psi_h),\zeta_h), \\  \langle \bB_{3h}(q_h,\psi_h),\zeta_h\rangle =b_{3h}((q_h,\psi_h),\zeta_h)&=\sum_{F\in\Sigma_h} b_{3h}^F((q_h,\psi_h),\zeta_h),\\ 
\langle \bC_{2h} w_h,\zeta_h\rangle =c_{2h}(w_h,\zeta_h)&=\sum_{F\in\Sigma_h} c_{2h}^F(w_h,\zeta_h),
\end{align*}
where the local discrete bilinear forms are defined as follows:
\begin{subequations}\label{discrete_forms}
\begin{align}
a_h^K(\bu_h,\bv_h) &:=  a_h^{0,K}(\bu_h,\bv_h) + a_h^{\nabla,K}(\bu_h,\bv_h)+a_h^{\Sigma,K}(\bu_h,\bv_h)\notag \\
&:= \frac{\rho_f}{\tau}\int_K \bPi_k^{0,K}\bu_h\cdot \bPi_k^{0,K}\bv_h +S_1^{0,K}((1-\bPi_k^{0,K})\bu_h,(1-\bPi_k^{0,K})\bv_h)\notag \\
&\quad +\, \mu \int_K \bbPi_{k-1}^{0,K}(\bnabla\bu_h):\bbPi_{k-1}^{0,K}(\bnabla\bv_h) + S_1^{\nabla,K}((1-\bPi_k^{\nabla,K})\bu_h,(1-\bPi_k^{\nabla,K})\bv_h)\notag \\
&\quad +\, \gamma \sum_{F\in\partial K\cap\Sigma_h}\int_F (\bPi_{{k}}^{0,F}\bu_h\times \bn_\Sigma) \cdot (\bPi_{{k}}^{0,F}\bv_h\times \bn_\Sigma), \\ 
b_{1h}^K(\bv_h,(q_h,\psi_h)) &:= b_{1h}^{\vdiv,K}(\bv_h,(q_h,\psi_h)) + b_{1h}^{\Sigma,K}(\bv_h,(q_h,\psi_h))\notag \\
&:= -\, \int_K q_h\vdiv\bv_h+ \sum_{F\in\partial K\cap\Sigma_h}\int_F \Pi_{k-1}^{0,F}\psi_h \,\bPi_{k}^{0,F}\bv_h\cdot\bn_\Sigma, \\ 
c_{1h}^F((p_h,\varphi_h),(q_h,\psi_h)) &:= {c_{1h}^{0,F}((p_h,\varphi_h),(q_h,\psi_h)) + c_{1h}^{\nabla,F}((p_h,\varphi_h),(q_h,\psi_h))}\notag \\
&:= \frac{c_0}{\tau} \int_F \Pi_{k-1}^{0,F}\varphi_h\Pi_{k-1}^{0,F}\psi_h + S_2^{0,F}((1-\Pi_{k-1}^{0,F})\varphi_h,(1-\Pi_{k-1}^{0,F})\psi_h)\notag \\
&\quad +\,  \kappa \hspace{-0.05cm} \int_F \hspace{-0.05cm} \bPi_{k-2}^{0,F}(\nabla_\Sigma\varphi_h)\cdot \bPi_{k-2}^{0,F}(\nabla_\Sigma\psi_h) \hspace{-0.05cm} + \hspace{-0.05cm} S_2^{\nabla,F}((1-\Pi_{k-1}^{\nabla,F})\varphi_h,(1-\Pi_{k-1}^{\nabla,F})\psi_h), \\
b_{2h}^F((q_h,\psi_h),\zeta_h) &:= -\frac{\alpha}{\tau} \int_F \bPi_{{k-2}}^{0,F}(\nabla\psi_h)\cdot\bPi_{{k-1}}^{0,F}(\nabla\zeta_h), \\ 
b_{3h}^F((q_h,\psi_h),\zeta_h) &:= - \frac{1}{\tau}\int_F\Pi_{{k}}^{0,F}\zeta_h \,\Pi_{{k-1}}^{0,F}\psi_h, \\
c_{2h}^F(w_h,\zeta_h) &:= {c_{2h}^{0,F}(w_h,\zeta_h) + c_{2h}^{\nabla^2,F}(w_h,\zeta_h)}\notag \\
&:= \frac{\rho_p}{\tau^3}\int_F \Pi_{{k}}^{0,F}w_h\Pi_{{k}}^{0,F}\zeta_h + S_3^{0,F}((1-\Pi_k^{0,F})w_h,(1-\Pi_k^{0,F})\zeta_h)\notag \\
&\quad +\, \hspace{-0.15cm}\frac{D}{\tau}\hspace{-0.1cm} \int_F \hspace{-0.1cm} \bbPi_{k-2}^{0,F}(\nabla^2_\Sigma w_h)\hspace{-0.05cm}: \hspace{-0.05cm}\bbPi_{k-2}^{0,F}(\nabla^2 \zeta_h)\hspace{-0.05cm} + \hspace{-0.05cm}S_3^{\nabla^2,F}((1-\Pi_k^{\nabla^2,F})w_h,(1-\Pi_k^{\nabla^2_\Sigma,F})\zeta_h).
\end{align}
\end{subequations}
We assume that the stabilisation terms $S_1^{0,K},S_1^{\nabla,K}:\bV_h^k(K)\times \bV_h^k(K)\to\mathbb{R}$, $S_2^{\nabla,F}:\rR_h^{k-1}(F)\times \rR_h^{k-1}(F) \to \mathbb{R}$, and $S_3^{\nabla^2,F}:\rW_h^k(K)\times \rW_h^k(K)\to\mathbb{R}$ are positive semi-definite inner products for any polyhedron $K\in\Om_h$ and face $F\in\Sigma_h$, and there exist positive constants $C_{s0}$, $C_{s1}$, $C_{s2}$, $C_{s3}$, $C_{s4}$, $C_{s5}$ (independent of $h$ and $K, F$) such that
\begin{subequations}
\begin{gather}
    C_{s0}^{-1}\frac{\rho_f}{\tau}\|\bv_h\|^2_{K}\leq S^{0,K}_{1}(\bv_h,\bv_h)\leq C_{s0}\frac{\rho_f}{\tau}\|\bv_h\|^2_{K},\label{s0}\\
     C_{s1}^{-1}\mu|\bv_h|^2_{1,K}\leq S^{\nabla,K}_{1}(\bv_h,\bv_h)\leq C_{s1}\mu|\bv_h|^2_{1,K},\label{s1}\\
     C_{s2}^{-1}\frac{c_0}{\tau}\|\psi_h\|^2_{F}\leq S^{0,F}_{2}(\psi_h,\psi_h)\leq C_{s2}\frac{c_0}{\tau}\|\psi_h\|^2_{F},\label{s2}\\
      C_{s3}^{-1}\kappa|\psi_h|^2_{1,F}\leq S^{\nabla,F}_{2}(\psi_h,\psi_h)\leq C_{s3}\kappa|\psi_h|^2_{1,F},\label{s3}\\
      C_{s4}^{-1}\frac{\rho_p}{\tau^3}\|\zeta_h\|^2_{F}\leq S^{0,F}_{3}(\zeta_h,\zeta_h)\leq C_{s4}\frac{\rho_p}{\tau^3}\|\zeta_h\|^2_{F},\label{s4}\\
      C_{s5}^{-1}\frac{D}{\tau}|\zeta_h|^2_{2,F}\leq S^{\nabla^2,F}_{3}(\zeta_h,\zeta_h)\leq C_{s5}\frac{D}{\tau}|\zeta_h|^2_{2,F}.\label{s5}
\end{gather}\end{subequations}
\textit{Discrete problem}. Find $(u_h,(p_h,\varphi_h),w_h)\in \bV_h^k\times(\rQ_h^{k-1}\times \rR_h^{k-1})\times \rW_h^k$ such that
\begin{subequations}\label{dis-problem}
\begin{align}
        a_{h}(\bu_h,\bv_h)+b_{1h}(\bv_h,(p_h,\varphi_h)) &= F_h(\bv_h), \\
        b_{1h}(\bu_h,(q_h,\psi_h))-c_{1h}((p_h,\varphi_h),(q_h,\psi_h))+(b_{2h}+b_{3h})((q_h,\psi_h),w_h) &= G_h((q_h,\psi_h)),\\
        (b_{2h}- b_{3h})((p_h,\varphi_h),\zeta_h)+c_{2h}(w_h,\zeta_h) &= M_h(\zeta_h) , 
\end{align}
\end{subequations}
for all $\bv_h\in\bV_h^k$, $(q_h,\psi_h)\in \rQ_h^{k-1}\times \rR_h^{k-1}$, $\zeta_h \in \rW_h^k$, whith the discrete right-hand sides:
\begin{align*}
    F_h(\bv_h)=\sum_{K\in \Omega_h}F_h^K(\bv_h)&:=\sum_{K\in \Omega_h}(\ff,\bPi_k^{0,K}\bv_h)_K,\\
    G_h((q_h,\psi_h))=\sum_{F\in\Sigma_h}G_h^F((q_h,\psi_h))&:=-\sum_{F\in \Sigma_h}(g,\Pi_{k-1}^{0,F}\psi_h)_F,\\
    M_h(\zeta_h)=\sum_{F\in\Sigma_h}M_h^F(\zeta_h)&:=\frac{1}{\tau}\sum_{F\in\Sigma_h}(m,\Pi_{k}^{0,F}\zeta_h)_F.
\end{align*}
\section{Unique solvability of the discrete problem}\label{sec:wellp-h} 
Elementary algebra with Cauchy--Schwarz inequalities, the stability estimates of projection operators, the bounds \eqref{s0}-\eqref{s3}, the trace inequality, provide the continuity (of all) and coercivity (of some) of the discrete operators
\begin{subequations}
\begin{align}
|\langle \bA_h\bu_h,\bv_h \rangle| & \leq \max\{\frac{\rho_f}{\tau}(1+C_{s0}),\mu (1+C_{s1}),{\gamma C_T^2}\} \|\bu_h\|_{1,\Omega}\|\bv_h\|_{1,\Omega},\label{ah:bound}\\
\langle \bA_h\bv_h,\bv_h \rangle & \geq \min\{\frac{\rho_f}{\tau}C_{s0}^{-1},\mu C_{s1}^{-1}\} \|\bv_h\|^2_{1,\Omega},\label{ah:coer}\\
\langle \bC_{1h}(p_h,\varphi_h),(q_h,\psi_h)\rangle & \leq \max\{\frac{c_0}{\tau}(1+C_{s2}),\kappa (1+C_{s3})\}\|(p_h,\varphi_h)\|\|(q_h,\psi_h)\|, \label{c1h:bound}\\
\langle \bC_{1h}(q_h,\psi_h),(q_h,\psi_h)\rangle & \geq \min\{\frac{c_0}{\tau}C_{s2}^{-1},\kappa C_{s3}^{-1}\}\|\psi_h\|_{1,\Sigma}^2\geq 0  
, \label{c1h:pos}\\
|\langle \bC_{2h} w_h,\zeta_h\rangle| & \leq \max \{\frac{\rho_p}{\tau^3}(1+C_{s4}),\frac{D}{\tau} (1+C_{s5})\} \|w_h\|_{2,\Sigma}\|\zeta_h\|_{2,\Sigma},\label{c2h:bound}\\
\langle \bC_{2h} \zeta_h,\zeta_h\rangle & \geq \min\{\frac{\rho_p}{\tau^3}C_{s4}^{-1},\frac{D}{\tau}C_{s5}^{-1}\}(\|\zeta_h\|^2_{0,\Sigma}+ |\zeta_h|^2_{2,\Sigma}),\label{c2h:coer}\\
\langle \bB_{1h}\bv_h,(q_h,\psi_h)\rangle & \leq \max\{1,{C_T }\}\|\bv_h\|_{1,\Omega}\|(q_h,\psi_h)\|,\label{b1h:bound}\\
\langle \bB_{2h}(q_h,\psi_h),\zeta_h\rangle & \leq \frac{\alpha}{\tau} \|(q_h,\psi_h)\|\|\zeta_h\|_{2,\Sigma},\label{b2h:bound}\\
 \langle \bB_{3h}(q_h,\psi_h),\zeta_h\rangle & \leq \frac{1}{\tau} \|(q_h,\psi_h)\|\|\zeta_h\|_{2,\Sigma},.\label{b3h:bound}
\end{align}\end{subequations}
for all $\bu_h,\bv_h \in \bV_h^k$, $(p_h,\varphi_h),(q_h,\psi_h) \in \rQ_h^{k-1}\times \rR_h^{k-1}$, $w_h,\zeta_h \in \rW_h^k$. 

The following proposition concerns classical polynomial approximation theory. It is formulated for scalar functions, but also applies to vectorial functions.
\begin{proposition}[polynomial approximation \cite{brenner2008mathematical}] \label{prop:est-poly}
    Given $K\in \Omega_h$ and $F\in \Sigma_h$, assume that $v\in  \rH^{{\overline{s}}}(K)$ and $\psi \in \rH^{{\overline{s}}}(F)$, with  $1 \leq {\overline{s}} \leq k+1$. Then, there exist $v_\pi \in \rP_k(K)$, $v^F_\pi:=v_\pi|_F,\psi_\pi \in \rP_k(F)$, and a positive constant $C_{\mathrm{apx}}$ (that depends exclusively  on $\rho$ from \ref{M1}) such that for $0\leq d\leq {\overline{s}}$ the following estimates hold
	\begin{align*}
        |v - v_\pi|_{d,K} &\leq C_{\mathrm{apx}} h_K^{{\overline{s}}-d}|v|_{{\overline{s}},K},\\
        |v - v^F_\pi|_{d,F} &\leq C_{\mathrm{apx}} h_K^{{\overline{s}}-d-\frac{1}{2}}|v|_{{\overline{s}},K},\\
        |\psi - \psi_\pi|_{d,F} &\leq C_{\mathrm{apx}} h_F^{{\overline{s}}-d}|v|_{{\overline{s}},F}.
	\end{align*} 
\end{proposition}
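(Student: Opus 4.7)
The plan is to construct $v_\pi$ and $\psi_\pi$ as averaged Taylor polynomials of degree $k$ (in the sense of Dupont--Scott) on $K$ and $F$, respectively, and then invoke the Bramble--Hilbert lemma on star-shaped domains. The shape-regularity assumptions \ref{M1}--\ref{M3} are exactly what guarantees that the centre ball/disk used in the averaging has diameter comparable to $h_K$, and hence that the resulting constants depend only on the universal parameter $\rho$.

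For the first (volumetric) estimate, I would take $v_\pi\in\rP_k(K)$ to be the averaged Taylor polynomial of $v$ over a ball $B\subset K$ of radius $\rho h_K$, which exists by \ref{M1}. The classical Dupont--Scott bound yields
\[
|v-v_\pi|_{d,K}\leq C\, h_K^{\overline{s}-d}\,|v|_{\overline{s},K},\qquad 0\leq d\leq \overline{s},
\]
with $C$ depending only on $\rho$, $k$ and $d$. Applying the same construction on $F$ (using \ref{M2} to provide a star-shaped disk of radius $\rho h_K$, and \ref{M3} to ensure $h_F\simeq h_K$ up to $\rho$) immediately gives $\psi_\pi\in\rP_k(F)$ with the analogous bound $|\psi-\psi_\pi|_{d,F}\leq C h_F^{\overline{s}-d}|\psi|_{\overline{s},F}$.

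The main obstacle is the intermediate (trace) estimate for $v-v_\pi^F$, which is where the loss of half a power of $h_K$ appears. I would combine the volumetric bound just obtained with the scaled multiplicative trace inequality
\[
\|w\|_{0,F}^2\lesssim h_K^{-1}\|w\|_{0,K}^2+h_K\,|w|_{1,K}^2\qquad\forall w\in \rH^1(K),
\]
which holds uniformly in $h$ for polyhedra satisfying \ref{M1}--\ref{M2} (either via a standard scaling argument on a reference star-shaped element, or by a sub-triangulation of $K$ into shape-regular simplices on which the usual trace inequality applies). Applying this to $D^{\alpha}(v-v_\pi)$ for every multi-index $\alpha$ with $|\alpha|=d$ and summing yields
\[
|v-v_\pi^F|_{d,F}^2\lesssim h_K^{-1}|v-v_\pi|_{d,K}^2+h_K\,|v-v_\pi|_{d+1,K}^2.
\]
Substituting the first estimate at orders $d$ and $d+1$ produces the desired $h_K^{\overline{s}-d-1/2}$ rate.

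Finally, taking $C_\mathrm{apx}$ to be the largest of the three constants obtained above concludes the proof. The only non-routine point is the uniformity of the trace inequality over the mesh, and this is precisely where the star-shapedness hypothesis with radius proportional to $h_K$ is needed to guarantee that all scaling arguments introduce only $\rho$-dependent constants.
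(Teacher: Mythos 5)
Your proposal is correct and follows exactly the standard route underlying the cited reference \cite{brenner2008mathematical}: averaged Taylor polynomials over the star-shape ball (Dupont--Scott), the Bramble--Hilbert lemma for the volumetric and in-face estimates, and a scaled multiplicative trace inequality (uniform under \ref{M1}--\ref{M2}) to pick up the loss of $h_K^{1/2}$ in the face estimate for $v_\pi^F$. The paper itself does not reproduce a proof, so there is no alternative argument to compare against; your approach is the one implicitly invoked.

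One small caveat worth noting: your derivation of the trace estimate requires the volumetric bound at order $d+1$, and more fundamentally needs $D^\alpha(v-v_\pi)$ with $|\alpha|=d$ to have a well-defined $\rL^2$ trace on $F$, both of which force $d\leq\overline{s}-1$. So as written, your argument covers $0\leq d\leq\overline{s}-1$ for the second inequality rather than the range $0\leq d\leq\overline{s}$ stated in the proposition. This is an imprecision in the statement rather than a flaw in your proof (for $d=\overline{s}$ the claimed bound $h_K^{-1/2}|v|_{\overline{s},K}$ would in any case be of no use), but it is worth flagging that the trace estimate should be read with the restriction $d\leq\overline{s}-1$. You also (correctly) wrote $|\psi|_{\overline{s},F}$ on the right-hand side of the third estimate, silently fixing what appears to be a typographical slip ($|v|_{\overline{s},F}$) in the proposition as printed.
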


Next, we focus on deriving a discrete inf-sup condition for the operator $\bB_{1h}$. We start by introducing a quasi-interpolation operator for the Stokes problem as follows.
\begin{proposition}[{Stokes} quasi-interpolator \cite{dkrr25}]\label{prop:int}
   Let $\bv\in \bH^1_\star(\Omega)\cap \bH^{s+1}(\Om)$  for $0\leq s \leq k$. Under the mesh assumptions, there exist $\mathbf{I}_h^\mathrm{S}\bv\in\bV_h^k$ and the positive constant $C_{\mathbf{I}^{\mathrm{S}}}$ (independent of $h$) such that 
   \[\|\bv-\mathbf{I}_h^\mathrm{S}\bv\|_{0,K}+h_K|\bv-\mathbf{I}_h^\mathrm{S}\bv|_{1,K}\leq  C_{\mathbf{I}^{\mathrm{S}}} h_K^{s+1}|\bv|_{s+1,D(K)},\]
   where $D(K)$ denotes the union of the polyhedral elements in $\Om_h$ intersecting $K$, for all $K\in \Omega_h$. 
\end{proposition}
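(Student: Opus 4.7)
The plan is to construct $\mathbf{I}_h^\mathrm{S}\bv$ as the composition of a regularising (Scott--Zhang-type) operator and the canonical VEM interpolation defined through the DOFs \ref{D1v}--\ref{D3v}. This indirect construction is necessary because in the borderline case $s=0$ the natural VEM interpolant is not well-defined for $\bv\in\bH^1(\Omega)$: vertex values, pointwise edge evaluations and edge moments entering \ref{D1v} do not exist for such functions.

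First, I would introduce an auxiliary Scott--Zhang-type operator $\mathcal{R}_h : \bH^1_\star(\Omega)\to \mathbf{Y}_h$, where $\mathbf{Y}_h$ is a conforming piecewise polynomial subspace of degree $\leq k$ built on a shape-regular simplicial sub-refinement of $\Omega_h$ that is compatible with $\Sigma_h$ and with the Dirichlet part $\Gamma^{\bu}$. Under \ref{M1}--\ref{M3}, the sub-refinement is itself shape-regular, and $\mathcal{R}_h$ enjoys the classical local approximation and stability estimate
\[
\|\bv - \mathcal{R}_h \bv\|_{0,K} + h_K\,|\bv - \mathcal{R}_h \bv|_{1,K} \lesssim h_K^{s+1}\,|\bv|_{s+1,D(K)}\qquad (0\leq s \leq k),
\]
preserves elementwise polynomials of degree $\leq k$, and respects the homogeneous trace on $\Gamma^{\bu}$.

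Second, I would define the element-wise canonical interpolation $\mathcal{I}_h^{\mathrm{v}}: \mathbf{Y}_h \to \bV_h^k$ by imposing that all the DOFs \ref{D1v}--\ref{D3v} of $\mathcal{I}_h^{\mathrm{v}}\bw$ coincide with those of $\bw$. Because $\bw\in\mathbf{Y}_h$ is piecewise polynomial, every DOF is well-defined, and unisolvence \cite[Proposition~3.1]{beirao20} yields a unique image in $\bV_h^k(K)$. Polynomial preservation $\mathcal{I}_h^{\mathrm{v}}\bp=\bp$ for $\bp\in\bP_k(K)$ follows from $\bP_k(K)\subset \bV_h^k(K)$. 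A scaling argument on a reference polyhedron, combined with \ref{M1}--\ref{M3}, then yields the local stability bound
\[
\|\mathcal{I}_h^{\mathrm{v}} \bw\|_{0,K} + h_K\,|\mathcal{I}_h^{\mathrm{v}} \bw|_{1,K} \lesssim \|\bw\|_{0,K} + h_K\,|\bw|_{1,K}.
\]
Setting $\mathbf{I}_h^\mathrm{S}\bv := \mathcal{I}_h^{\mathrm{v}}(\mathcal{R}_h \bv)$ and choosing a local polynomial best approximation $\bv_\pi$ as in Proposition~\ref{prop:est-poly}, the polynomial preservation of both $\mathcal{R}_h$ and $\mathcal{I}_h^{\mathrm{v}}$ gives $\mathbf{I}_h^\mathrm{S}\bv_\pi = \bv_\pi$, so
\[
\bv - \mathbf{I}_h^\mathrm{S}\bv = (\bv-\bv_\pi) - \mathcal{I}_h^{\mathrm{v}}\bigl(\mathcal{R}_h(\bv-\bv_\pi)\bigr),
\]
and combining the stability of $\mathcal{I}_h^{\mathrm{v}}$ with the approximation/stability of $\mathcal{R}_h$ on the patch $D(K)$ together with Proposition~\ref{prop:est-poly} delivers the desired estimate.

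The hard part I anticipate is producing a Scott--Zhang-style regularisation that is genuinely compatible with the polyhedral mesh $\Omega_h$ and with the boundary decomposition $\Gamma^{\bu}\cup\Gamma^{\bsigma}\cup\Sigma$ without losing mesh-independence of the constants when shape-regularity is expressed in the VEM sense of \ref{M1}--\ref{M3}; a closely related subtlety is the $\bH^1$-stability of $\mathcal{I}_h^{\mathrm{v}}$, which requires discrete trace and inverse inequalities on star-shaped polyhedra whose proofs rely explicitly on \ref{M1}--\ref{M3} and on bounding the VE basis functions by their own DOFs through a uniform Vandermonde-type estimate.
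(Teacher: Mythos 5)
The paper does not prove this proposition; it imports it verbatim from the companion reference \cite{dkrr25}, so there is no in-paper argument to check you against. Evaluating your proposal on its own merits: the composition of a boundary-respecting Scott--Zhang regularisation on a simplicial sub-refinement with the canonical DOF-based VEM interpolation is the standard and essentially correct route to a minimal-regularity quasi-interpolant of this kind, and you have correctly diagnosed why a direct DOF evaluation fails at $s=0$ (vertex and edge functionals in \ref{D1v} are not continuous on $\bH^1$ in 3D). One technical point you should tighten: in the identity $\bv-\mathbf{I}_h^{\mathrm{S}}\bv=(\bv-\bv_\pi)-\mathcal{I}_h^{\mathrm{v}}\bigl(\mathcal{R}_h(\bv-\bv_\pi)\bigr)$, the polynomial $\bv_\pi$ must be a single element of $\bP_k$ on the whole patch $D(K)$ (not merely on $K$), otherwise $\mathcal{R}_h\bv_\pi|_K=\bv_\pi|_K$ fails, since the Scott--Zhang averaging for nodes of $K$ reaches into neighbouring polyhedra; you should then invoke a Bramble--Hilbert estimate on $D(K)$, which under \ref{M1}--\ref{M3} is possible but requires that the patch be a finite union of star-shaped sets with uniformly controlled chunkiness. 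You are also right that the $\bH^1$-stability of $\mathcal{I}_h^{\mathrm{v}}$ on $\bV_h^k(K)$ is the genuinely delicate ingredient: it needs inverse and trace estimates for VE functions (the same Chen--Huang and Sutton-type bounds the paper already invokes in the proof of Proposition~\ref{prop:fortin}) and uniform control of the DOF-to-basis map; flagging these as ``the hard part'' rather than proving them means your write-up is a correct blueprint rather than a complete proof, but the structure matches what one expects \cite{dkrr25} to contain.
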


The construction of a novel minimal regularity Stokes Fortin-like interpolation is provided next, which in turn enables the proof of the discrete inf-sup condition via the orthogonality property.
\begin{proposition}[{Stokes} Fortin operator]\label{prop:fortin}
    For $k\geq 2$, there exists  a linear operator ${\bar{\mathbf{I}}_h^{\mathrm{S}}}:\bH^1_\star(\Omega)\to\bV_h^k$ satisfying the following  properties: 
 \begin{subequations}   \begin{enumerate}
        \item Orthogonality: For $\bv\in\bH^1_\star(\Omega)$,
        \begin{align}
        \int_\Omega\vdiv(\bv-{\bar{\mathbf{I}}_h^{\mathrm{S}}}\bv)q_h=0\quad\forall\;q_h\in Q_h^{k-1}. \label{Fortin:ortho}
        \end{align}
        \item Boundedness: There exists a positive constant ${\overline{C}_{\bar{\mathbf{I}}^{\mathrm{S}}}}$, independent of $h$, such that 
        \begin{align}
        \|{\bar{\mathbf{I}}_h^{\mathrm{S}}}\bv\|_{1,K}\leq {\overline{C}_{\bar{\mathbf{I}}^{\mathrm{S}}}}\|\bv\|_{1,K}\quad\forall\;\bv\in\bH^1_\star(\Omega).\label{Fortin:bound}
        \end{align}
        \item Error estimate: For $\bv\in \bH^1_\star(\Omega)\cap\bH^{{s+1}}(\Omega)$ and $0\leq s \leq k$, there exists a positive constant ${C_{\bar{\mathbf{I}}^{\mathrm{S}}}}$ independent of $h$ such that for all $K\in \Omega_h$
        \begin{align}
        \|\bv-{\bar{\mathbf{I}}_h^{\mathrm{S}}}\bv\|_{0,K}+h_K|\bv-{\bar{\mathbf{I}}_h^{\mathrm{S}}}\bv|_{1,K}\leq {C_{\bar{\mathbf{I}}^{\mathrm{S}}}} h^{s+1}_K|\bv|_{s+1,K}.\label{Fortin:est}
        \end{align}
    \end{enumerate}\end{subequations}
\end{proposition}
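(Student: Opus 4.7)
The plan is to construct $\bar{\mathbf{I}}_h^{\mathrm{S}}$ as a DOF-matching correction of the quasi-interpolator $\mathbf{I}_h^{\mathrm{S}}$ from Proposition~\ref{prop:int}. The key observation is that the Fortin orthogonality \eqref{Fortin:ortho} is equivalent to the local identity $\int_K \vdiv(\bar{\mathbf{I}}_h^{\mathrm{S}}\bv-\bv)\,\chi_{k-1} = 0$ for all $\chi_{k-1}\in \rP_{k-1}(K)$ on every $K\in\Om_h$. Splitting $\chi_{k-1}$ into its constant and non-constant parts and applying the divergence theorem, this reduces to (i) matching the total normal flux $\int_{\partial K}\bv\cdot\bn$ and (ii) matching the moments of $\vdiv\bv$ against every $\chi_{k-1}\in\rM_{k-1}(K)\setminus\mathbb{R}$.

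Accordingly, I would define $\bar{\mathbf{I}}_h^{\mathrm{S}}\bv := \mathbf{I}_h^{\mathrm{S}}\bv + \bw_h$, where $\bw_h\in\bV_h^k$ is prescribed globally by its DOFs as follows: the vertex and edge values in \ref{D1v} are taken to be zero; the face DOFs \ref{D2v} of $\bw_h$ equal the corresponding face moments of $\bv-\mathbf{I}_h^{\mathrm{S}}\bv$, against both $\chi_{k-2}\in\cM_{k-2}(F)$ (normal component) and $\bchi_{k-2}\in[\cM_{k-2}(F)]^2$ (tangential components); and the interior DOFs \ref{D3v} are chosen so that the moments against $\bM_{k-2}^\perp(K)$ vanish while the divergence moments $\frac{1}{h_K}\int_K \vdiv\bw_h\,\chi_{k-1}$ coincide with $\frac{1}{h_K}\int_K \vdiv(\bv-\mathbf{I}_h^{\mathrm{S}}\bv)\,\chi_{k-1}$ for all $\chi_{k-1}\in\rM_{k-1}(K)\setminus\mathbb{R}$. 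This prescription is meaningful for $\bv\in\bH^1_\star(\Omega)$ because it only involves $\rL^2$-traces on faces (via the trace theorem) and $\rL^2$-integrals on cells, both well-defined under $\bH^1$-regularity.

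The three properties are verified as follows. Orthogonality \eqref{Fortin:ortho} follows directly: matching the face normal moment against $\chi_{k-2}=1$ yields $\int_{\partial K}(\bv-\bar{\mathbf{I}}_h^{\mathrm{S}}\bv)\cdot\bn = 0$, which by the divergence theorem handles the constant-$\chi_{k-1}$ case, and the interior divergence-DOF prescription handles $\chi_{k-1}\in\rM_{k-1}(K)\setminus\mathbb{R}$; summation over $K\in\Om_h$ produces \eqref{Fortin:ortho}. The boundedness \eqref{Fortin:bound} and the error estimate \eqref{Fortin:est} then follow from the standard VEM DOF-norm equivalence on $\bV_h^k(K)$: every DOF of $\bw_h$ is controlled via Cauchy--Schwarz and a multiplicative trace inequality by $\|\bv-\mathbf{I}_h^{\mathrm{S}}\bv\|_{0,K}$ and $|\bv-\mathbf{I}_h^{\mathrm{S}}\bv|_{1,K}$; invoking Proposition~\ref{prop:int} delivers $\|\bw_h\|_{0,K}+h_K|\bw_h|_{1,K}\lesssim h_K^{s+1}|\bv|_{s+1,D(K)}$ for the error bound and $\|\bw_h\|_{1,K}\lesssim\|\bv\|_{1,D(K)}$ for the boundedness, and a triangle inequality against Proposition~\ref{prop:int} concludes both estimates.

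The main obstacle I anticipate is establishing the DOF-based norm equivalence for the enhanced space $\bV_h^k(K)$ on general star-shaped polyhedra with $h$-independent constants. This equivalence, while standard in the VEM literature, requires a careful scaling argument that uses the mesh assumptions \ref{M1}--\ref{M3}, a Poincar\'e inequality on star-shaped domains, and the explicit scalings built into the scaled monomial bases used to specify \ref{D1v}--\ref{D3v}. A secondary delicate point is extracting the optimal-order face estimate $\|\bv-\mathbf{I}_h^{\mathrm{S}}\bv\|_{0,F}\lesssim h_K^{s+1/2}|\bv|_{s+1,D(K)}$ from Proposition~\ref{prop:int} via the multiplicative trace inequality, which is needed so that the face DOF contributions to $\bw_h$ yield the correct $h_K^{s+1}$ rate in \eqref{Fortin:est}.
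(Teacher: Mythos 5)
Your construction of $\bw_h$ by prescribing all its DOFs at once is a genuinely different route from the paper's. The paper builds the correction in two stages: first an auxiliary map $\bPi_2$ matching only the \emph{constant} face normal and tangential moments of $\bv$ (with all edge, non-constant face, and cell DOFs of the correction set to zero), and then a local bubble $\widetilde{\bv}_h$, vanishing on $\partial K$, that fixes the remaining non-constant cell divergence moments. You instead match all face moments \ref{D2v} and all non-constant divergence moments \ref{D3v} of $\bv-\mathbf{I}_h^{\mathrm{S}}\bv$ in a single stroke. Both yield the local polynomial orthogonality of $\vdiv(\bv-\bar{\mathbf{I}}_h^{\mathrm{S}}\bv)$, and your reduction of \eqref{Fortin:ortho} to this local condition is correct.

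The gap is exactly the one you flag: an $h$-uniform DOF-based norm equivalence on $\bV_h^k(K)$ for general star-shaped polyhedra. For the enhanced 3D Stokes VE space this is not available off-the-shelf, and invoking it buries the core work of the proof. The paper's route is designed precisely to avoid it: by the very definition of $\bV_h^k(K)$, the corrections $\bPi_2(\bv-\bPi_1\bv)$ and $\widetilde{\bv}_h$ solve local Stokes problems with prescribed divergence and boundary data, so one can apply an $h$-uniform Stokes stability estimate on star-shaped domains; the boundary datum $\bw_2\in\bB^k(\partial K)$ is then bounded via an inverse estimate on $\rB^k(F)$ and the explicit $h_F$-scaling of the face VE basis, rather than via any 3D bulk DOF-norm equivalence. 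This PDE-level argument is essentially how one would have to establish such a norm equivalence on $\bV_h^k(K)$ in the first place, since the space is defined only implicitly through these local Stokes problems. So your proposal is structurally sound (the orthogonality argument and the triangle-inequality scaffold both work), but it defers the decisive stability estimate to a cited ``standard'' fact that is in reality as hard as what the paper proves directly; I recommend replacing that step with the local Stokes well-posedness and boundary-trace scaling arguments that the paper uses.
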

\begin{proof}
\textit{Step 1}. Given a $\bv\in\bH^1_\star(\Omega)$, we first aim to construct $\overline{\bv}_h\in\bV_h^{k}$ such that it is orthogonal to piecewise constants, that is, 
\begin{align}
(\vdiv(\bv-\overline{\bv}_h),q_0)_\Omega=0\quad\forall\;q_0\in\rP_0(\Om_h).\label{est:vh}
\end{align}
Let $\bPi_1\bv = \mathbf{I}_h^\mathrm{S}\bv$ for the interpolation $\mathbf{I}_h^\mathrm{S}\bv\in\bV_h^k$ from Proposition~\ref{prop:int}. Define $\bPi_2\bv\in \bV_h^{k}(K)$ for all $K\in\Om_h$ through the \acp{dof} as  edge moments $(\mathbf{D1}\bPi_2\bv)$ and the  cell moments $(\mathbf{D3}\bPi_2\bv)$ to be zero and face moments $(\mathbf{D2}\bPi_2\bv)$ to be equal to
\begin{align}
    \frac{1}{h_F}\int_F(\bPi_2\bv\cdot\bn_K^F) = \frac{1}{h_F}\int_F(\bv\cdot\bn_K^F) \quad\text{and}\quad \frac{1}{h_F}\int_F (\bPi_2\bv)|_F = \frac{1}{h_F}\int_F \bv|_F.\label{5.1.15}
\end{align}
Note from the definition of $\bV_h^{k}$, that $\bPi_2\bv$ locally solves the following Stokes problem:
 \begin{align*}
     -\Delta\bPi_2\bv-\nabla s = \0\;\text{for some}\;s\in \rL^2_{0}(K),&\quad
     \vdiv(\bPi_2\bv) = \Pi_0^{0,K}\vdiv(\bv)\;\text{in}\;K, \quad 
     \bPi_2\bv = \bw_2\;\text{on}\;\partial K,
 \end{align*}
  where $\bw_2\in \bB^{k}(\partial K)$  is defined uniquely through above \acp{dof}. The continuous dependence of the data for Stokes  with the dependence of a constant on mesh regularity parameter $\rho$ and dimension $n$, and not on domain $K$ specifically for star-shaped domains follows from  \cite[Exercise~III.3.5 and Lemma~III.3.1]{galdi2011introduction}. This implies that 
 \begin{align}
     \|\bPi_2\bv\|_{1,K}\lesssim \|\Pi_0^{0,K}\vdiv(\bv)\|_{0,K}+\|\bw_2\|_{1/2,\partial K}\lesssim |\bv|_{1,K}+h_K^{-1/2}\|\bw_2\|_{\partial K}\label{5.1.16}
 \end{align}
 with the $\rL^2$ stability of $\Pi_0^{0,K}$ and the inverse estimate for conforming VE functions in  $\bB^k(F)$ from \cite{chen2018some} in the last step. If $\varphi_j$  denotes the basis of $\bB^k(F)$ of dimension $N_{\text{dof}}^{k,F}$ for $F\in\partial K$ and for $j=1,\dots,N_{\text{dof}}^{k,F}$, then we can write $\bw_2$ in terms of basis functions $\{\varphi_j\}$ and the scaling $\|\varphi_j\|_F\approx h_F$ for 3D from \cite{sutton2017virtual} lead to
 \begin{align}
 \|\bw_2\|_F \lesssim \bigg|\frac{1}{h_F}\int_F (\bPi_2\bv)\bigg|h_F=\bigg|\frac{1}{h_F}\int_F \bv\bigg|h_F\leq \|\bv\|_F\lesssim h_F^{-1/2}\|\bv\|_{0,K}+h_F^{1/2}|\bv|_{1,K}\label{5.1.17}
 \end{align}
with \eqref{5.1.15} in the second step, the Cauchy--Schwarz inequality in the third step and the trace inequality in the last step. The combination of \eqref{5.1.16}-\eqref{5.1.17} results in 
\begin{align}
\|\bPi_2\bv\|_{1,K}\lesssim h_K^{-1}\|\bv\|_{0,K}+|\bv|_{1,K}. \label{stab:pi2}
\end{align}
Then define $\overline{\bv}_h:=\bPi_1\bv+\bPi_2(\bv-\bPi_1\bv)$. Note that the definition of $\bPi_2\bv$ provides $\int_F\overline{\bv}_h\cdot\bn_K^F=\int_F\bv\cdot\bn_K^F$ and an integration by parts proves \eqref{est:vh}. The triangle inequality, the boundedness of $\bPi_2\bv$ and  Proposition~\ref{prop:int} show
\begin{align}
    \|\bv-\overline{\bv}_h\|_{1,K}\lesssim \|\bv-\bPi_1\bv\|_{1,K}+ h_K^{-1}\|\bv-\bPi_1\bv\|_{0,K}\lesssim h_K^s\|\bv\|_{s+1,D(K)}.\label{5.1.19}
\end{align}
\textit{Step 2}. Given a $\bv\in\bH^1_\star(\Omega)$ and $\overline{\bv}_h\in\bV_h^{k}$, the next aim is construct a bubble function $\widetilde{\bv}_h\in\bV_h^k$. We set the \acp{dof} of $\widetilde{\bv}_h$ as zero for all edge, face and cell moments except the ones with respect to the divergence part. In particular, 
\begin{align}
\int_K\vdiv(\widetilde{\bv}_h)\chi_{k-1} = \int_K\vdiv(\bv-\overline{\bv}_h)\chi_{k-1}\quad\text{for all}\; \chi_{k-1}\in\rP_{k-1}(K)\setminus\mathbb{R}.\label{5.1.20}
\end{align}
In other words, the definition of the space $\bV_h^k(K)$ implies that $\widetilde{\bv}_h$ locally solves the following Stokes problem:
 \begin{align*}
     -\Delta\widetilde{\bv}_h-\nabla \widetilde{s} = \0\quad\text{for some}\;\widetilde{s}\in \rL^2(K),&\quad
     \vdiv(\widetilde{\bv}_h) = \Pi_{k-1}^{0,K}\vdiv(\bv-\overline{\bv}_h)\quad\text{in}\;K,\quad 
     \widetilde{\bv}_h = \0\quad\text{on}\;\partial K.
 \end{align*}
An analogous argument as in \cite{dkrr25} shows  the existence of  a constant $C$ independent of the domain $K$ such that 
 \begin{equation}
 \|\widetilde{\bv}_h\|_{1,K}\leq C\|\Pi_{k-1}^{0,K}\vdiv(\bv-\overline{\bv}_h)\|_{0,K}\leq C\|\vdiv(\bv-\overline{\bv}_h)\|_{0,K}\leq C\|\bv-\overline{\bv}_h\|_{1,K}. 
 \label{est:vh_tilde}
 \end{equation}
\textit{Step 3 (proof of 1 and 2)}. Finally, given $\bv\in\bH^1_\star(\Omega)$, define ${\bar{\mathbf{I}}_h^{\mathrm{S}}}\bv=\overline{\bv}_h+\widetilde{\bv}_h\in\bV_h^k$. This with \eqref{5.1.20} for $q_h\in\rP_{k-1}(\Om_h)\setminus\rP_0(\Om_h)$ and \eqref{est:vh} for $q_h\in\rP_0(\Om_h)$ conclude the required orthogonality for $q_h\in Q_h^{k-1}$. To the end, the triangle inequality {$\|{\bar{\mathbf{I}}_h^{\mathrm{S}}}\bv\|_{1,K}\leq \|\overline{\bv}_h-\bv\|_{1,K}+\|\bv\|_{1,K}+\|\widetilde{\bv}_h\|_{1,K}$} followed by estimates \eqref{5.1.19} and \eqref{est:vh_tilde} for all $K\in\Om_h$ conclude the proof of boundedness of ${\bar{\mathbf{I}}_h^{\mathrm{S}}}\bv$.\\

\noindent\textit{Step 4 (proof of 3)}. For the $\bH^1$-error estimate, the triangle inequality leads to
\begin{align}
{\|\bv-{\bar{\mathbf{I}}_h^{\mathrm{S}}}\bv\|_{1,K}\leq \|\bv-\overline{\bv}_h\|_{1,K}+\|\widetilde{\bv}_h\|_{1,K}\leq (1+C) \|\bv-\overline{\bv}_h\|_{1,K}\lesssim h^s\|\bv\|_{s+1,K}}\label{5.1.24}
\end{align}
with \eqref{est:vh_tilde} and \eqref{5.1.19} in the last two inequalities. For the $\bL^2$-error estimate, observe that the Poincar\'e inequality can be applied to $\widetilde{\bv}_h$ on each $K\in\Om_h$ since it is zero along the boundary $\partial K$, and hence the estimate \eqref{5.1.24} shows that 
\begin{align}
    {\|\widetilde{\bv}_h\|_{0,K}\lesssim h|\widetilde{\bv}_h|_{1,K}\lesssim h^{s+1}\|\bv\|_{s+1,K}.}\label{vhtilde:l2}
\end{align}
The triangle inequality and the Poincar\'e--Friedrichs inequality for $\bv-\bPi_2\bv$ since $\int_{\partial  K} \bv-\bPi_2\bv=\0$ from \cite{huang2023some}, lead to
\begin{align*}
    \|\bPi_2\bv\|_{0,K}&\lesssim h_K|\bv-\bPi_2\bv|_{1,K}+\|\bv\|_{0,K}\leq h_K(|\bv|_{1,K}+|\bPi_2\bv|_{1,K})+\|\bv\|_{0,K} 
    \lesssim h_K|\bv|_{1,K}+\|\bv\|_{0,K},
\end{align*}
with again the triangle inequality in the second step and \eqref{stab:pi2} in the last step. This bound, the triangle inequality, and Proposition~\ref{prop:int} prove that
\[
    \|\bv-\overline{\bv}_h\|_{0,K}\lesssim \|\bv-\bPi_1\bv\|_{0,K}+h_K|\bv-\bPi_1\bv|_{1,K}\lesssim h_K^{s+1}\|\bv\|_{s+1,D(K)}.
\]
This and \eqref{vhtilde:l2} in the triangle inequality ${\|\bv-{\bar{\mathbf{I}}_h^{\mathrm{S}}}\bv\|_{0,K}\leq \|\bv-\overline{\bv}_h\|_{0,K}+\|\widetilde{\bv}_h\|_{0,K}}$ conclude the proof of $L^2$-error estimate.
\end{proof}

{We also introduce interpolation estimates for the poroelastic plate in the following result.}
\begin{proposition}[poroelastic plate interpolation \cite{khot23}]\label{prop:int_plate}
For all $F\in \Sigma_h$. Let $\psi\in\rH^1_0(\Sigma)\cap\rH^{r_1}(\Sigma)$ and $\zeta\in\rH^2_0(\Sigma)\cap\rH^{r_2}(\Sigma)$ with $1\leq r_1 \leq k$ and $2\leq r_2 \leq k+1$. Then, there exist interpolation operators $\mathrm{I}_{1,h}^{\mathrm{P}}:\rH^1_0(\Sigma)\cap\rH^{r_1}(\Sigma)\rightarrow \rR_h^{k-1}$ and $\mathrm{I}_{2,h}^{\mathrm{P}}:\rH^2_0(\Sigma)\cap\rH^{r_2}(\Sigma)\rightarrow \rW_h^k$, such that
\begin{align*}
|\psi-\mathrm{I}_{1,h}^{\mathrm{P}}\psi|_{j_1,F} &\leq C_{\mathrm{I}^{\mathrm{P}}_{1}} h^{r_1-j_1}_F |\psi|_{r_1,F}, \quad 0\leq j_1 \leq 1, \\
|\zeta-\mathrm{I}_{2,h}^{\mathrm{P}}\zeta|_{j_2,F} &\leq C_{\mathrm{I}^{\mathrm{P}}_{2}} h^{r_2-j_2}_F |\zeta|_{r_2,F}, \quad 0\leq j_2 \leq 2.
\end{align*}
\end{proposition}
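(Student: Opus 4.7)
The plan is to follow the standard VEM interpolation recipe based on the Bramble--Hilbert lemma combined with polynomial preservation, applied separately to $\mathrm{I}_{1,h}^{\mathrm{P}}$ and $\mathrm{I}_{2,h}^{\mathrm{P}}$. First I would define both interpolants face-by-face by prescribing the DoFs: for any admissible $\psi$, let $\mathrm{I}_{1,h}^{\mathrm{P}}\psi \in \rR_h^{k-1}(F)$ be the unique element obtained from the unisolvent DoFs \ref{D1r}--\ref{D2r} matched against those of $\psi$, and analogously for $\mathrm{I}_{2,h}^{\mathrm{P}}\zeta \in \rW_h^k(F)$ via \ref{D1w}--\ref{D2w}. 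The well-posedness of this definition requires the DoFs to be continuous functionals on $\rH^{r_1}(F)$ (resp.\ $\rH^{r_2}(F)$), which in 2D is ensured by the Sobolev embeddings $\rH^{r_1}(F) \hookrightarrow C^0(\overline F)$ for $r_1 > 1$ (for the vertex and point values) and $\rH^{r_2}(F) \hookrightarrow C^1(\overline F)$ for $r_2 > 2$ (to evaluate $\nabla \zeta$ at vertices); in the threshold cases $r_1=1$ and $r_2=2$ one replaces point evaluations by a quasi-interpolation (Scott--Zhang/Clément) step as in \cite{khot23}, without changing the final estimate.

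Once the interpolants are defined, the key fact to exploit is polynomial preservation: for $p\in\rP_{k-1}(F)$ one has $\mathrm{I}_{1,h}^{\mathrm{P}} p = p$ (because all DoFs of $p$ belong to the space and unisolvency forces $\mathrm{I}_{1,h}^{\mathrm{P}} p$ to coincide with $p$), and similarly $\mathrm{I}_{2,h}^{\mathrm{P}} p = p$ for $p \in \rP_k(F) \subset \rW_h^k(F)$. Combined with Proposition~\ref{prop:est-poly}, this yields the classical triangle-inequality argument: pick $\psi_\pi\in\rP_{k-1}(F)$ from Proposition~\ref{prop:est-poly} and write
\begin{equation*}
\psi - \mathrm{I}_{1,h}^{\mathrm{P}}\psi = (\psi - \psi_\pi) - \mathrm{I}_{1,h}^{\mathrm{P}}(\psi - \psi_\pi),
\end{equation*}
so that
\begin{equation*}
|\psi - \mathrm{I}_{1,h}^{\mathrm{P}}\psi|_{j_1,F} \leq |\psi - \psi_\pi|_{j_1,F} + |\mathrm{I}_{1,h}^{\mathrm{P}}(\psi - \psi_\pi)|_{j_1,F}.
\end{equation*}
Proposition~\ref{prop:est-poly} directly controls the first term by $h_F^{r_1-j_1}|\psi|_{r_1,F}$. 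For the second term, I would use local stability of the VEM interpolant, namely a bound of the form $|\mathrm{I}_{1,h}^{\mathrm{P}}\xi|_{j_1,F} \lesssim h_F^{-j_1}\|\xi\|_{L^\infty(F)} + \cdots$ obtained by expanding $\mathrm{I}_{1,h}^{\mathrm{P}}\xi$ in a local basis, applying scaled inverse estimates on the virtual basis functions, and bounding each DoF value by trace inequalities and Sobolev embeddings; plugging $\xi = \psi - \psi_\pi$ and invoking Proposition~\ref{prop:est-poly} on every DoF scale gives the claimed rate.

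The treatment of $\mathrm{I}_{2,h}^{\mathrm{P}}$ follows the same scheme with $\psi_\pi$ replaced by $\zeta_\pi \in \rP_k(F)$, but the DoFs \ref{D1w} now involve vertex values, scaled vertex gradients, edge moments of $\partial_{\bn}\zeta$, and edge/face moments of $\zeta$. I would bound each contribution by repeated application of the trace inequality (edge terms) and the 2D Sobolev embedding (vertex and vertex-gradient terms), balanced with the appropriate scaling factors $h_z$ in \ref{D1w}, so that each piece is controlled by $h_F^{r_2 - j_2}|\zeta|_{r_2,F}$. The regularity ranges $r_1\leq k$ and $r_2 \leq k+1$ come out naturally from the polynomial approximation in Proposition~\ref{prop:est-poly} (the degree of $\psi_\pi$ is $k-1$, while that of $\zeta_\pi$ is $k$).

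The main obstacle is the local stability of the two interpolants, and specifically handling the virtual basis. Unlike finite elements, the basis of $\rR_h^{k-1}(F)$ and $\rW_h^k(F)$ is not explicitly known, so the inverse estimate $|\mathrm{I}_{j,h}^{\mathrm{P}}\xi|_{j_i,F} \lesssim h_F^{-j_i}\|\mathrm{I}_{j,h}^{\mathrm{P}}\xi\|_{0,F}$ and the boundedness of the DoF-to-norm map must be established via scaling to a reference configuration combined with the mesh regularity assumptions \ref{M1}--\ref{M3}; this is the step that genuinely requires the results from \cite{khot23}, where the necessary inverse and trace inequalities for the conforming $\rH^2$ plate space on general polygons have been derived. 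Aside from that technical point, the rest of the argument is routine polynomial approximation combined with the triangle inequality.
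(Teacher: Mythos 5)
The paper offers no proof of Proposition~\ref{prop:int_plate}; it is stated as a citation to \cite{khot23}, so there is no in-paper argument to compare against. Your outline (define the interpolants by matching the unisolvent DoFs, exploit the polynomial invariance $\mathrm{I}^{\mathrm{P}}_{1,h}p=p$ for $p\in\rP_{k-1}(F)\subset\rR_h^{k-1}(F)$ and $\mathrm{I}^{\mathrm{P}}_{2,h}p=p$ for $p\in\rP_k(F)\subset\rW_h^k(F)$, write $\psi-\mathrm{I}\psi=(\psi-\psi_\pi)-\mathrm{I}(\psi-\psi_\pi)$, and close with Proposition~\ref{prop:est-poly}) is the standard VEM interpolation recipe, which is also the structure of the argument in \cite{khot23}, so at a structural level the two coincide.

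The one place where your sketch would need repair is the proposed local stability bound $|\mathrm{I}^{\mathrm{P}}_{1,h}\xi|_{j_1,F}\lesssim h_F^{-j_1}\|\xi\|_{L^\infty(F)}+\cdots$ via scaled inverse estimates on the virtual basis functions. First, an $L^\infty$ control cannot survive the endpoint regularities $r_1=1$ and $r_2=2$, since $\rH^1(F)\not\hookrightarrow C^0(\overline F)$ and $\rH^2(F)\not\hookrightarrow C^1(\overline F)$ in two dimensions; you do already flag the quasi-interpolation fix for the thresholds, but then the stability bound itself must not be routed through $L^\infty$. Second, inverse estimates on a local basis of $\rR_h^{k-1}(F)$ or $\rW_h^k(F)$ are not directly available, because the basis is not polynomial. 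The cleaner (and, as in \cite{khot23}, standard) route is to exploit that the DoF interpolant solves an auxiliary well-posed boundary value problem on $F$ --- a Laplace problem for $\rR_h^{k-1}(F)$, a biharmonic problem with clamped data for $\rW_h^k(F)$ --- whose data are the DoFs of $\psi$ (resp.~$\zeta$). The a priori estimate for that auxiliary problem on star-shaped polygons, uniform under \ref{M1}--\ref{M3}, gives $\|\mathrm{I}^{\mathrm{P}}_{1,h}\psi\|_{1,F}\lesssim\|\psi\|_{1,F}$ and $\|\mathrm{I}^{\mathrm{P}}_{2,h}\zeta\|_{2,F}\lesssim\|\zeta\|_{2,F}$ directly, with no detour through $L^\infty$ or basis-level inverse inequalities; substituting this step for yours makes the rest of your argument go through as intended.
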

Finally, we establish the discrete inf-sup for the operator $\bB_{1h}$.
\begin{theorem}[discrete inf-sup]\label{dis-inf-sup}
    For sufficiently small mesh-size $h$, there exists a positive constant $\beta'$ independent of $h$ such that
    \[\sup_{\bv_h\in \bV_h^k\setminus\{\cero\}}\frac{b_{1h}(\bv_h,(q_h,\psi_h))}{\|\bv_h\|_{1,\Om}}\geq \beta'\|(q_h,\psi_h)\|\qquad\forall(q_h,\psi_h)\in \rQ_h^{k-1}\times \rR_h^{k-1}.\]
\end{theorem}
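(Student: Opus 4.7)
The plan is to combine a classical Fortin-type argument with a perturbation analysis that absorbs the surface consistency error of $b_{1h}^{\Sigma}$ for small $h$. Given $(q_h,\psi_h)\in\rQ_h^{k-1}\times\rR_h^{k-1}$ viewed as an element of $\rL^2(\Omega)\times\rH^1_0(\Sigma)$, I first apply the continuous inf-sup condition (Lemma~\ref{lem:inf-sup}) to produce $\bv\in\bH^1_\star(\Omega)$ satisfying $b_1(\bv,(q_h,\psi_h))\geq \beta\,\|(q_h,\psi_h)\|\,\|\bv\|_{1,\Omega}$, and take as candidate $\bv_h:={\bar{\mathbf{I}}_h^{\mathrm{S}}}\bv\in\bV_h^k$. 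The boundedness \eqref{Fortin:bound} immediately delivers $\|\bv_h\|_{1,\Omega}\leq {\overline{C}_{\bar{\mathbf{I}}^{\mathrm{S}}}}\|\bv\|_{1,\Omega}$.

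The next step is to quantify the difference $b_{1h}(\bv_h,(q_h,\psi_h))-b_1(\bv,(q_h,\psi_h))$. The Fortin orthogonality \eqref{Fortin:ortho} annihilates the bulk contribution. For the surface contribution, since $\Pi_{k-1}^{0,F}\psi_h\,\bn_\Sigma\in\bP_{k-1}(F)\subset\bP_k(F)$, the adjoint property of $\bPi_k^{0,F}$ gives
\[
\sum_{F\in\Sigma_h}\int_F\Pi_{k-1}^{0,F}\psi_h\,\bPi_k^{0,F}\bv_h\cdot\bn_\Sigma = \sum_{F\in\Sigma_h}\int_F\Pi_{k-1}^{0,F}\psi_h\,\bv_h\cdot\bn_\Sigma,
\]
and the residual can then be split as $T_1+T_2$ with $T_1:=\sum_F\int_F(\Pi_{k-1}^{0,F}\psi_h-\psi_h)\,\bv_h\cdot\bn_\Sigma$ and $T_2:=\sum_F\int_F\psi_h\,(\bv_h-\bv)\cdot\bn_\Sigma$.

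For $T_1$ I would exploit the orthogonality of $\Pi_{k-1}^{0,F}$ against $\rP_{k-1}(F)$ to subtract $\bPi_{k-1}^{0,F}\bv_h\cdot\bn_\Sigma$, then combine $\|\psi_h-\Pi_{k-1}^{0,F}\psi_h\|_{0,F}\lesssim h_F|\psi_h|_{1,F}$, boundedness of $\bPi_{k-1}^{0,F}$ in $\rL^2(F)$, and the scaled trace $\|\bv_h\|_{0,\Sigma}\lesssim\|\bv_h\|_{1,\Omega}$, obtaining $|T_1|\lesssim h\,\|\psi_h\|_{1,\Sigma}\|\bv_h\|_{1,\Omega}$. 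For $T_2$ I would use the face-moment preservation built into ${\bar{\mathbf{I}}_h^{\mathrm{S}}}$: since $\widetilde{\bv}_h$ vanishes on $\partial K$ and the face \acp{dof} of $\bPi_2$ in the proof of Proposition~\ref{prop:fortin} replicate those of the input, one has $\int_F(\bv_h-\bv)\cdot\bn_\Sigma\,\chi_{k-2}=0$ for every $\chi_{k-2}\in\rP_{k-2}(F)$. Subtracting $\Pi_{k-2}^{0,F}\psi_h$ from $\psi_h$ and invoking the error bound \eqref{Fortin:est} with $s=0$, together with the scaled trace $\|\bv_h-\bv\|_{0,F}\lesssim h_K^{1/2}|\bv|_{1,K}$, yields $|T_2|\lesssim h^{3/2}\,\|\psi_h\|_{1,\Sigma}\|\bv\|_{1,\Omega}$.

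Adding the two estimates gives $|T_1+T_2|\leq C_\ast\,h\,\|(q_h,\psi_h)\|\,\|\bv\|_{1,\Omega}$. Taking $h$ small enough that $C_\ast h\leq \beta/2$ absorbs this perturbation into the continuous inf-sup term, so that $b_{1h}(\bv_h,(q_h,\psi_h))\geq (\beta/(2{\overline{C}_{\bar{\mathbf{I}}^{\mathrm{S}}}}))\,\|(q_h,\psi_h)\|\,\|\bv_h\|_{1,\Omega}$, proving the claim with $\beta':=\beta/(2{\overline{C}_{\bar{\mathbf{I}}^{\mathrm{S}}}})$. The main obstacle will be controlling $T_1$, because the Fortin orthogonality is engineered only against bulk pressures and because the virtual trace $\bv_h|_F$ has no polynomial approximation estimate without inverse inequalities; the argument hinges on exploiting $L^2$-projection orthogonality twice — once on $\psi_h$, once on $\bv_h$ — so that only surface mass-type norms remain, which are controllable by the scaled trace in $\Omega$.
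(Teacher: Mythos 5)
Your approach is correct in substance but follows a genuinely different route than the paper's. The paper re-traces the construction inside Lemma~\ref{lem:inf-sup}: it builds $\widetilde{\bv}=\widetilde{\bv}_1+\widetilde{\bv}_2$ with $\widetilde{\bv}\cdot\bn_\Sigma=\psi_h$ and $\vdiv\widetilde{\bv}=-q_h$ \emph{exactly}, applies $\bar{\mathbf{I}}_h^{\mathrm{S}}$, and then computes $b_{1h}(\bar{\mathbf{I}}_h^{\mathrm{S}}\widetilde{\bv},(q_h,\psi_h))$ directly by splitting the surface form into four terms $T_1,\dots,T_4$, the main contribution $T_4=\|\psi_h\|^2_{1,\Sigma}$ falling out of the exact normal trace via \eqref{eq:duality_equalities}. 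You instead treat the continuous inf-sup as a black box, interpolate the abstract supremizer with $\bar{\mathbf{I}}_h^{\mathrm{S}}$, and estimate the consistency gap $b_{1h}(\bar{\mathbf{I}}_h^{\mathrm{S}}\bv,\cdot)-b_1(\bv,\cdot)$; the adjointness trick on $\Pi_{k-1}^{0,F}\psi_h\,\bn_\Sigma\in\bP_{k-1}(F)$ is a nice simplification that the paper does not use. Your version is more modular and shorter; the paper's version tracks the constants more explicitly. Both hinge on the same Fortin properties \eqref{Fortin:ortho}--\eqref{Fortin:est}. The one slip is your claimed face-moment preservation for $T_2$: the construction of $\bar{\mathbf{I}}_h^{\mathrm{S}}$ in Step~1 of Proposition~\ref{prop:fortin} (see \eqref{5.1.15}) only preserves the \emph{constant} face moment of the normal component (the higher moments of $\bPi_2$ are set to zero, and $\bPi_1=\mathbf{I}_h^{\mathrm{S}}$ carries no face-moment orthogonality), so $\int_F(\bar{\mathbf{I}}_h^{\mathrm{S}}\bv-\bv)\cdot\bn_\Sigma\,\chi_{k-2}=0$ holds only for constant $\chi_{k-2}$, not all of $\rP_{k-2}(F)$, and the asserted $h^{3/2}$ rate is therefore unavailable. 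Using only \eqref{Fortin:est} with $s=0$ and the scaled trace inequality gives $|T_2|\lesssim h^{1/2}\|\psi_h\|_{1,\Sigma}\|\bv\|_{1,\Omega}$ --- the same power the paper obtains for its error terms --- and this is still enough for the absorption to go through once $h$ is small.
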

\begin{proof}
    Following the proof of Lemma~\ref{lem:inf-sup}, given $\psi_h\in R_h^{k-1}$, $q_h\in Q_h^{k-1}$ and $c_h:=\frac{1}{|\Omega|}\int_\Sigma\psi_h$, there exists $\widetilde{\bv}_1 \in \bH^1_{\Gamma^{\bu}\cup\Sigma}(\Omega)$ such that 
    \begin{equation}
        \vdiv \widetilde{\bv}_1 = - q_h - c_h\quad \text{in $\Omega$},  \quad \widetilde{\bv}_1 \cdot \bn_\Sigma = 0\quad \text{on $\Sigma$},\quad\|\widetilde{\bv}_1\|_{1,\Omega} \leq \widetilde{\beta}' (\|q_h\|_{0,\Omega}+\|\psi_h\|_{1,\Sigma}),\label{eq:v1h}
    \end{equation}
    and $\widetilde{\bv}_2  \in \bH_{\partial\Omega  \setminus \Sigma}^1(\Omega)\subset \bH^1_\star(\Omega)$ such that
    \begin{align*} 
        - \bDelta \widetilde{\bv}_2 +\nabla q_{2h}  = \cero, \quad  \vdiv\widetilde{\bv}_2 = c_h \quad \text{in $\Omega$},\quad \widetilde{\bv}_2 = \psi_h \bn_\Sigma\quad \text{on $\Sigma$}, \quad \widetilde{\bv}_2 = \cero\quad \text{on $\partial\Omega\setminus\Sigma$}
    \end{align*}
    satisfying  the properties
    \begin{equation}
        \widetilde{\bv}_2\cdot\bn_\Sigma = \psi_h \quad \text{on $\Sigma$} \quad\text{and}\quad  \|\widetilde{\bv}_2\|_{1,\Omega} \leq \widehat{\beta}' \|\psi_h\|_{1,\Sigma}.\label{eq:v2h}
    \end{equation}
   Next, we define $\widetilde{\bv} := \widetilde{\bv}_1 + \widetilde{\bv}_2$, and  note that $\overline{\mathbf{I}}_{h}^{\mathrm{S}} \widetilde{\bv} \in \bV_h^k$ and $\vdiv(\overline{\mathbf{I}}_{h}^{\mathrm{S}} \widetilde{\bv}) = \Pi_{k-1}^{0}(\vdiv \widetilde{\bv}) = q_h$. Thus, the combination \eqref{eq:v1h}-\eqref{eq:v2h} leads to
    \begin{align}\label{first_part_disc_inf-sup}
        \sup_{\bv_h\in \bV_h^k\setminus\{\cero\}}\!\!\!\frac{\langle\bB_{1h}\bv_h,(q_h,\psi_h)\rangle  }{\|\bv_h\|_{1,\Omega}} &\geq \frac{\langle\bB_{1h}\overline{\mathbf{I}}_{h}^{\mathrm{S}} \widetilde{\bv},(q_h,\psi_h)\rangle  }{\|\overline{\mathbf{I}}_{h}^{\mathrm{S}} \widetilde{\bv}\|_{1,\Omega}}  \notag\\
        & \geq \frac{1}{\overline{C}_{\bar{\mathbf{I}}^{\mathrm{S}}}\|\tilde{\bv}\|_{1,\Omega}}\left(\|q_h\|^2_{0,\Omega}+\displaystyle{\sum_{K\in\Omega_h}\sum_{F\in\partial K\cap\Sigma_h}\int_F \Pi_{k-1}^{0,F}\psi_h \,\bPi_{k}^{0,F}(\overline{\mathbf{I}}_{h}^{\mathrm{S}} \widetilde{\bv})\cdot\bn_\Sigma}\right).
    \end{align}
    Then, we rewrite the remaining term as follows 
    \begin{align*}
     \sum_{K\in\Omega_h}\sum_{F\in\partial K\cap\Sigma_h}\int_F \Pi_{k-1}^{0,F}\psi_h \,\bPi_{k}^{0,F}(\overline{\mathbf{I}}_{h}^{\mathrm{S}} \widetilde{\bv})\cdot\bn_\Sigma &= \sum_{K\in\Omega_h} \left(-\sum_{F\in\partial K\cap\Sigma_h} \int_F \Pi_{k-1}^{0,F}\psi_h \,\bPi_{k}^{0,F}(\widetilde{\bv}-\overline{\mathbf{I}}_{h}^{\mathrm{S}} \widetilde{\bv})\cdot\bn_\Sigma \right. \notag \\
        &\quad - \sum_{F\in\partial K\cap\Sigma_h}\int_F \left(\psi_h - \Pi_{k-1}^{0,F}\psi_h\right)\, \bPi_{k}^{0,F}\widetilde{\bv}\cdot\bn_\Sigma \notag\\
        &\quad - \sum_{F\in\partial K\cap\Sigma_h}\int_F \psi_h \left(\widetilde{\bv}-\bPi_{k}^{0,F}\widetilde{\bv}\right)\cdot \bn_\Sigma \notag \\
        &\quad \left. + \sum_{F\in\partial K\cap\Sigma_h}\int_F \psi_h\, \widetilde{\bv}\cdot\bn_\Sigma \right) \notag \\
        &=: \sum_{K\in\Omega_h} \left(T_1 + T_2 + T_3 + T_4\right).
    \end{align*}
    For the first term $T_1$, the Cauchy--Schwarz inequality provides 
    \begin{align}\label{eq:T1}
        -\sum_{K\in\Omega_h} T_1 &\leq \sum_{K\in\Omega_h}\sum_{F\in\partial K\cap\Sigma_h}\|\Pi_{k-1}^{0,F}\psi_h\|_{0,F} \|\bPi_{k}^{0,F}(\widetilde{\bv}-\overline{\mathbf{I}}_{h}^{\mathrm{S}} \widetilde{\bv})\cdot\bn_\Sigma\|_{0,F}\nonumber\\
        &\leq \sum_{K\in\Omega_h}\sum_{F\in\partial K\cap\Sigma_h}\|\psi_h\|_{0,F} \|\widetilde{\bv}-\overline{\mathbf{I}}_{h}^{\mathrm{S}} \widetilde{\bv}\|_{0,F} \nonumber \\
        &\leq  C_T \sum_{K\in\Omega_h}\sum_{F\in\partial K\cap\Sigma_h}\|\psi_h\|_{1,F}\left(h_K^{-\frac{1}{2}}\|\widetilde{\bv}-\overline{\mathbf{I}}_{h}^{\mathrm{S}} \widetilde{\bv}\|_{0,K}+h_K^{\frac{1}{2}}|\widetilde{\bv}-\overline{\mathbf{I}}_{h}^{\mathrm{S}} \widetilde{\bv}|_{1,K} \right)\nonumber \\ 
        &\leq C_T C_{\bar{\mathbf{I}}^{\mathrm{S}}} \sum_{K\in\Omega_h}\sum_{F\in\partial K\cap\Sigma_h}\|\psi_h\|_{1,F}h^{\frac{1}{2}}_K\|\widetilde{\bv}\|_{1,K}\nonumber \\
        &\leq C_1 h^{\frac{1}{2}} \left(\|q_h\|_{0,\Om}\|\psi_h\|_{1,\Sigma} + \|\psi_h\|^2_{1,\Sigma}\right)
    \end{align}
    with the continuity of $\Pi_{k-1}^{0,F}$, and $\bPi_{k}^{0,F}$ in the $\|\cdot\|_{0,F}$ norm for the second step, the trace inequality in the third step, \eqref{Fortin:est} in the fourth step, and \eqref{eq:v1h}-\eqref{eq:v2h} in the last step with $C_1:=C_T
    C_{\bar{\mathbf{I}}^{\mathrm{S}}}(\widetilde{\beta}'+\widehat{\beta}') $. For the second term $T_2$, the Cauchy--Schwarz inequality  lead to
    \begin{align*}
        -\sum_{K\in\Omega_h}T_2 &\leq  \sum_{K\in\Omega_h}\sum_{F\in\partial K\cap\Sigma_h}\|\psi_h - \Pi_{k-1}^{0,F}\psi_h\|_{0,F} \|\bPi_{k}^{0,F}\widetilde{\bv}\cdot\bn_\Sigma\|_{0,F}\nonumber \\
        &\leq C_{\text{apx}} \sum_{K\in\Omega_h}\sum_{F\in\partial K\cap\Sigma_h} h_F \|\psi_h\|_{1,F}\|\widetilde{\bv}\cdot\bn_\Sigma\|_{0,F} \nonumber \\
        &\leq C_{\text{apx}}  h \|\psi_h\|^2_{1,\Sigma},
    \end{align*}
    where we used Proposition~\ref{prop:est-poly} and the continuity of $\bPi_{k}^{0,F}$ in the $\|\cdot\|_{0,F}$ norm for the second inequality, and the relation $\widetilde{\bv}\cdot\bn_\Sigma=\psi_h$ together with \eqref{eq:duality_equalities} for the last inequality. For the third term $T_3$, similar arguments with $C_2:=C_{\text{apx}}(\widetilde{\beta}'+\widehat{\beta}')$ yield 
    \begin{align*}
        -\sum_{K\in\Omega_h}T_3 &\leq \sum_{K\in\Omega_h}\sum_{F\in\partial K\cap\Sigma_h} \|\psi_h\|_{0,F} \|(\widetilde{\bv}-\bPi_{k}^{0,F}\widetilde{\bv})\cdot \bn_\Sigma\|_{0,F}\\
        &\leq \sum_{K\in\Omega_h}\sum_{F\in\partial K\cap\Sigma_h} \|\psi_h\|_{1,F} \|\widetilde{\bv}-\bPi_{k}^{0,F}\widetilde{\bv}\|_{0,F} \nonumber\\
        &\leq C_{\text{apx}}\sum_{K\in\Omega_h}\sum_{F\in\partial K\cap\Sigma_h} \|\psi_h\|_{1,F} h^{\frac{1}{2}}_K\|\widetilde{\bv}\|_{1,K} \nonumber\\
        &\leq C_2  h^{\frac{1}{2}} \left(\|q_h\|_{0,\Om}\|\psi_h\|_{1,\Sigma} + \|\psi_h\|^2_{1,\Sigma}\right).
    \end{align*}
    Finally, for the last term $T_4$, using $\widetilde{\bv}\cdot\bn_\Sigma=\psi_h$ and \eqref{eq:duality_equalities}, we obtain
\begin{align}\label{eq:T4}
        \sum_{K\in\Omega_h}T_4 &=  \|\psi_h\|^2_{1,\Sigma}.
    \end{align}
    Thus, putting together \eqref{eq:T1}-\eqref{eq:T4} and invoking Young's inequality, we can assert the following bound
    \begin{align}
        \sum_{K\in\Omega_h}\left( T_1 + T_2 + T_3 + T_4 \right) & \geq (1-\frac{3}{2}(C_1+C_2) h^{\frac{1}{2}} -C_{\text{apx}}h) \|\psi_h\|^2_{1,\Sigma} -\frac{1}{2}(C_1+C_2) h^{\frac{1}{2}} \|q_h\|_{0,\Om}^2.\label{eq:T1-T4}
    \end{align}
    Hence, for sufficiently small $h<h_0$ with $h_0:=\frac{1}{2}\min\{\frac{4}{9}(C_1+C_2)^{-2},C_{\text{apx}}^{-1}\}$, the combination of \eqref{first_part_disc_inf-sup}  and \eqref{eq:T1-T4} imply that for some $\beta_0>0$
    \begin{align*}
        \sup_{\bv_h\in \bV_h^k\setminus\{\cero\}}\!\!\!\frac{\langle\bB_{1h}\bv_h,(q_h,\psi_h)\rangle  }{\|\bv_h\|_{1,\Omega}} &\geq
        \frac{\beta_0}{\overline{C}_{\bar{\mathbf{I}}^{\mathrm{S}}}}\frac{\|q_h\|^2_{0,\Omega} + \|\psi_h\|_{1,\Sigma}^2}{\|\tilde{\bv}\|_{1,\Omega}} \geq \beta'  \|(q_h,\psi_h)\|.
    \end{align*}
    Therefore, the inf-sup condition holds with $\beta':=\beta_0(\sqrt{2}(\widetilde{\beta}'+\widehat{\beta}')\overline{C}_{\bar{\mathbf{I}}^{\mathrm{S}}})^{-1}>0$.
\end{proof}
Note that the unique solvability of the discrete coupled system \eqref{dis-problem} is a direct consequence of Theorem~\ref{th:fredholm} applied to the discrete product space $\bbX_h:=\bV_h^k\times (\rQ_h^{k-1}\times \rR_h^{k-1})\times \rW_h^k$.
\begin{theorem}[discrete well-posedness]\label{th:disc-well-posed}
     Assuming that $h$ is sufficiently small, there exists unique discrete solution $\vec{x}_h=(\bu_h,(p_h,\varphi_h),w_h)^{\top}\in \bbX_h$ to the discrete problem \eqref{dis-problem} assuming the continuous dependence of data provided by
     \begin{equation}\label{eq:dis-dep}
         \|\bu_h\|_{1,\Omega} + \|(p_h,\varphi_h)\| + \|w_h\|_{2,\Sigma} \leq C( \|\ff\|_{\Omega} + \|g\|_{\Sigma} + \|m\|_{\Sigma}).
     \end{equation}
\end{theorem}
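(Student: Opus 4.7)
The plan is to replay, at the discrete level, the three-step analysis used for Theorem~\ref{th:wellp}, relying on the discrete counterparts \eqref{ah:bound}--\eqref{b3h:bound} of the continuity/coercivity estimates, on the discrete inf-sup condition of Theorem~\ref{dis-inf-sup}, and on the Fredholm alternative of Theorem~\ref{th:fredholm} applied to $\bbX_h$. Concretely, I split the left-hand side operator of \eqref{dis-problem} as $\cA_h+\cK_h:\bbX_h\to\bbX_h'$ with
\[
\cA_h := \begin{pmatrix}
    \bA_h & \bB_{1h}^* & \cero \\
    \bB_{1h} & - \bC_{1h} & \cero \\
    \cero & \cero  & \bC_{2h}
\end{pmatrix}, \qquad \cK_h := \begin{pmatrix}
    \cero & \cero & \cero \\
    \cero & \cero & \bB_{2h}^* + \bB_{3h}^* \\
    \cero & \bB_{2h} -\bB_{3h} & \cero
\end{pmatrix},
\]
and note that $\cK_h$ is automatically compact because $\bbX_h$ is finite-dimensional.

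First I verify that $\cA_h$ is self-adjoint and invertible. The block-triangular structure lets me decouple the problem $\cA_h \vec{\bx}_h = (F_h,G_h,M_h)^\top$ into a Stokes-type saddle-point system for $(\bu_h,(p_h,\varphi_h))$ and a biharmonic-type equation for $w_h$, exactly as in \eqref{eq:split}. The first subsystem satisfies the hypotheses of the discrete version of Theorem~\ref{th:perturbed}: symmetry of $a_h(\cdot,\cdot)$ and $c_{1h}(\cdot,\cdot)$ is built into the definitions in \eqref{discrete_forms}; continuity of $a_h, b_{1h}, c_{1h}$ follows from \eqref{ah:bound}, \eqref{b1h:bound}, \eqref{c1h:bound}; coercivity of $a_h$ and semi-positivity of $c_{1h}$ from \eqref{ah:coer}, \eqref{c1h:pos}; and the discrete inf-sup from Theorem~\ref{dis-inf-sup}. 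For the third block, $c_{2h}(\cdot,\cdot)$ is symmetric, bounded \eqref{c2h:bound} and coercive \eqref{c2h:coer} on $\rW_h^k$, so Lax--Milgram yields the unique $w_h\in\rW_h^k$. Self-adjointness of $\cA_h$ reduces to the symmetry of $a_h, c_{1h}, c_{2h}$.

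Next I establish injectivity of $\cA_h+\cK_h$. Setting the right-hand side to zero in \eqref{dis-problem} and choosing $\bv_h=\bu_h$, $(q_h,\psi_h)=(p_h,\varphi_h)$, $\zeta_h=w_h$, then adding the first and third equations and subtracting the second, the cross terms $b_{2h}(\cdot,\cdot)$ cancel, leaving
\[
a_h(\bu_h,\bu_h)+c_{1h}((p_h,\varphi_h),(p_h,\varphi_h))-2\,b_{3h}((p_h,\varphi_h),w_h)+c_{2h}(w_h,w_h)=0.
\]
Bounding $-2b_{3h}$ using \eqref{b3h:bound} and Young's inequality with parameter $\epsilon$ satisfying $1/c_0 \le \epsilon\le \rho_p/\tau^2$, and invoking the coercivities \eqref{ah:coer}, \eqref{c1h:pos}, \eqref{c2h:coer}, I obtain $\bu_h=\cero$, $\varphi_h=0$, $w_h=0$, exactly as in Lemma~\ref{lem:A+K}. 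Testing the first discrete equation together with the inf-sup of Theorem~\ref{dis-inf-sup} and \eqref{ah:bound} then forces $p_h=0$. By Theorem~\ref{th:fredholm}, $\cA_h+\cK_h$ is invertible on $\bbX_h$.

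Finally, the bound \eqref{eq:dis-dep} is obtained by reproducing verbatim the a priori estimate at the end of the proof of Theorem~\ref{th:wellp}: combining the three equations with $(\bv_h,(q_h,\psi_h),\zeta_h)=(\bu_h,(p_h,\varphi_h),w_h)$ controls $\|\bu_h\|_{1,\Omega}+\|\varphi_h\|_{1,\Sigma}+\|w_h\|_{2,\Sigma}$, and a second use of the discrete inf-sup in Theorem~\ref{dis-inf-sup} applied to the first equation recovers $\|p_h\|_{0,\Omega}$. The main obstacle, and the only genuinely new ingredient, is that the constant $C$ in \eqref{eq:dis-dep} must be $h$-independent; this is precisely what is guaranteed by the uniform-in-$h$ stabilisation bounds \eqref{s0}--\eqref{s5} and by the $h$-independence of $\beta'$ from Theorem~\ref{dis-inf-sup}, which in turn justifies the "sufficiently small $h$" hypothesis.
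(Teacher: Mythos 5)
Your proof is correct and reaches the same conclusion, but it carries some unnecessary machinery. The paper's own proof exploits the fact that \eqref{dis-problem} is a square linear system on a finite-dimensional space, so injectivity of the full operator immediately gives unique solvability; it therefore goes directly to the homogeneous problem and does \emph{not} need to verify separately that $\cA_h$ is self-adjoint and invertible, nor that $\cK_h$ is compact, nor to invoke Theorem~\ref{th:fredholm} at all. You partially notice this (observing that compactness of $\cK_h$ is automatic in finite dimensions) yet still replay the full Fredholm decomposition; that is valid but heavier than necessary. The core steps — testing with $(\bv_h,(q_h,\psi_h),\zeta_h)=(\bu_h,(p_h,\varphi_h),w_h)$, adding the first and third equations and subtracting the second so that the $b_{2h}$ terms cancel, absorbing $-2b_{3h}$ via Young, recovering $p_h=0$ from the discrete inf-sup of Theorem~\ref{dis-inf-sup} and \eqref{ah:bound}, and reproducing the a priori estimate of Theorem~\ref{th:wellp} — all match the paper. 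One small inaccuracy: after invoking the \emph{discrete} coercivity bounds \eqref{c1h:pos} and \eqref{c2h:coer}, the admissible Young parameter must satisfy $C_{s2}/c_0 \le \epsilon' \le \rho_p/(\tau^2 C_{s4})$, not the continuous-level range $1/c_0 \le \epsilon \le \rho_p/\tau^2$ that you wrote, because the discrete lower bounds carry the stabilisation constants $C_{s2}$ and $C_{s4}$.
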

 \begin{proof}
       For the existence and uniqueness in the finite dimensional setting,  it suffices to prove that the solution of the homogeneous discrete problem is trivial. Given homogeneous data $\ff=\0, g =0$ and $m=0$,  let $\vec{\bx}_h$ be a solution of \eqref{dis-problem}. Then we follow the analogous arguments as in Lemma~\ref{lem:A+K} to obtain
    \begin{subequations}\label{dis-problem-zero-rhs}
    \begin{align}
            a_{h}(\bu_h,\bv_h)+b_{1h}(\bv_h,(p_h,\varphi_h)) &= 0,\\
            b_{1h}(\bu_h,(q_h,\psi_h))-c_{1h}((p_h,\varphi_h),(q_h,\psi_h))+(b_{2h}+b_{3h})((q_h,\psi_h),w_h) &= 0,\\
            (b_{2h}- b_{3h})((p_h,\varphi_h),\zeta_h)+c_{2h}(w_h,\zeta_h) &= 0. 
    \end{align}
    \end{subequations}
    for all $\bv_h\in\bV_h^k$, $(q_h,\psi_h)\in \rQ_h^{k-1}\times \rR_h^{k-1}$, and $\zeta_h \in \rW_h^k$. The discrete inf-sup condition from Theorem~\ref{dis-inf-sup}, the first equation in \eqref{dis-problem-zero-rhs}, and the boundedness of $a_{h}(\cdot,\cdot)$ \eqref{ah:bound}, imply the following estimate 
\begin{align*}
\beta \|p_h\|_{0,\Omega} \leq \beta  \| (p_h,\varphi_h)\| &\leq \sup_{\bv_h\in \bV_h^k\setminus\{\cero\}} \frac{b_{1h}(\bv_h,(p_h,\varphi_h))}{\|\bv_h\|_{1,\Omega}} \notag\\
& \leq \sup_{\bv_h\in \bV_h^k\setminus\{\cero\}} \frac{|a_h(\bu_h,\bv_h)|}{\|\bv_h\|_{1,\Omega}}\notag\\
& \leq \max\{\frac{\rho_f}{\tau}(1+C_{s0}),\mu (1+C_{s1}),{\gamma C_T}\} \|\bu\|_{1,\Omega}.\end{align*}
       Next, elementary algebra with  the test functions $\bv_h=\bu_h$, $(q_h,\psi_h)=(p_h,\varphi_h),$ and
       $\zeta_h=w_h$ in \eqref{dis-problem-zero-rhs}  leads to
       \[a_h(\bu_h,\bu_h)+c_{1h}((p_h,\varphi_h),(p_h,\varphi_h))-2b_{3h}((p_h,\varphi_h),w_h)+c_{2h}(w_h,w_h)=0.\]
       The coercivity of $a_h$ from \eqref{ah:coer} and the lower bounds of $c_{1h}$ and $c_{2h}$   from \eqref{c1h:pos} and  \eqref{c2h:coer}, and the boundedness of $b_{3h}$ from \eqref{b3h:bound}  in the above identity show
       \begin{align*}
           0\geq& \min\{\frac{\rho_f}{\tau C_{s0}},\frac{\mu}{C_{s1}}\} \|\bu_h\|_{1,\Omega}^2+\frac{c_0}{\tau C_{s2}}\|\varphi_h\|^2_{0,\Sigma}\hspace{-0.1cm}+\frac{\kappa}{C_{s3}} |\varphi_h|_{1,\Sigma}^2 + \frac{\rho_p}{\tau^3 C_{s4}}\|w_h\|^2_{0,\Sigma}+\frac{D}{\tau C_{s5}} |w_h|^2_{2,\Sigma}-\frac{1}{\tau} \|\varphi_h\|_{0,\Sigma}\|w_h\|_{0,\Sigma}\\
           & \geq \min\{\frac{\rho_f}{\tau C_{s0}},\frac{\mu}{C_{s1}} \}\|\bu_h\|_{1,\Omega}^2+(\frac{c_0}{\tau C_{s2}}-\frac{1}{\epsilon \tau})\|\varphi_h\|^2_{0,\Sigma}+\frac{\kappa}{C_{s3}}|\varphi_h|_{1,\Sigma}^2 + (\frac{\rho_p}{\tau^3 C_{s4}}-\frac{\epsilon}{\tau})\|w_h\|^2_{0,\Sigma}+\frac{D}{\tau C_{s5}} |w_h|^2_{2,\Sigma}.
        \end{align*}
        Finally, and similarly to the continuous case, we can infer that $\bu_h = \cero$, $\varphi_h = 0$, and $w_h=0$ by choosing $\epsilon'$ such that $\frac{C_{s2}}{c_0} < \epsilon' < \frac{\rho_p}{\tau^2C_{s4}}$. The bound in \eqref{eq:dis-dep} follows exactly as in the proof of Theorem~\ref{th:wellp}.  
 \end{proof}
 
\section{A priori error estimates}\label{sec:error}
This section establishes the convergence of the \ac{vem} from Section~\ref{sec:vem}. We start by defining the total error,  total discrete interpolation error, and  total continuous interpolation error as
\begin{align*}
\text{e}_{\vec{x}_h}&:=\text{e}_{\bu_h}+\text{e}_{p_h}+\text{e}_{w_h}+\text{e}_{\varphi_h}:=\norm{\bu-\bu_h}_{1,\Omega}+\norm{p-p_h}_{0,\Omega}+\norm{w-w_h}_{2,\Sigma}+\norm{\varphi-\varphi_h}_{1,\Sigma}, \\ 
\mathrm{e}_{\vec{\bx}_h^*} &:= \text{e}_{\bu_h^*}+\text{e}_{p_h^*}+\text{e}_{w_h^*}+\text{e}_{\varphi_h^*}:= \|\bu_h-\bar{\mathbf{I}}_{h}^{\mathrm{S}} \bu\|_{1,\Omega} + \|p_h - \Pi_{k-1}^{0,K} p\|_{0,\Omega} + \|\varphi_h - \mathrm{I}_{1,h}^{\mathrm{P}} \varphi\|_{1,\Sigma} + \|w_h - \mathrm{I}_{2,h}^{\mathrm{P}} w\|_{2,\Sigma}, \\ 
\mathrm{e}_{\vec{\bx}^*} &:= \text{e}_{\bu^*}+\text{e}_{p^*}+\text{e}_{w^*}+\text{e}_{\varphi^*} := \|\bu-\bar{\mathbf{I}}_{h}^{\mathrm{S}} \bu\|_{1,\Omega} + \|p - \Pi_{k-1}^{0,K} p\|_{0,\Omega} + \|\varphi - \mathrm{I}_{1,h}^{\mathrm{P}} \varphi\|_{1,\Sigma} + \|w - \mathrm{I}_{2,h}^{\mathrm{P}} w\|_{2,\Sigma}, 
\end{align*}
respectively. The following result provides a bound of $\text{e}_{\vec{x}_h}$ in terms of the data approximation, polynomial approximation, and interpolation errors.
\begin{theorem}[energy-error estimate]\label{lemm:errorForApriori}
    Assuming that $h$ is sufficiently small, let $\vec{x}\in \bbX$ and $\vec{x}_h\in \bbX_h$ be the unique solutions to \eqref{eq:weak} and \eqref{dis-problem}, respectively. Then, the following estimate holds
    \begin{align*}
    \mathrm{e}_{\vec{x}_h} &\leq C_{\mathrm{e}} \sum_{K\in \Omega_h} \bigg[ \|F^K - F_h^K\|_{(\bV_h^k(K))'} + \|\bu-\bPi_k^{0,K}\bu\|_{1,K} + \|\bu-\bPi_k^{\nabla,K}\bu\|_{1,K}  + \|p-\Pi_{k-1}^{0,K}p\|_{0,K} \\
    & \quad +\, \|\bu-\bar{\mathbf{I}}_{h}^{\mathrm{S}}\bu\|_{1,K} + \sum_{F\in \partial K \cap \Sigma_h} \biggl(\|G^F - G_h^F\|_{(\rQ_h^{k-1}(K)\times \rR_h^{k-1}(F))'} + \|M^F-M_h^F\|_{(\rW_h^k(F))'} \\
    & \quad +\, \|\bu-\bPi_{k}^{0,F}\bu\|_{0,F} + \|\varphi-\Pi_{k-1}^{0,F}\varphi\|_{1,F} + \|\varphi-\Pi_{k-1}^{\nabla,F}\varphi\|_{1,F} + \|w-\Pi_{k}^{0,F}w\|_{2,F} \\
    & \quad +\, \|w-\Pi_{k}^{\nabla^2,F}w\|_{2,F} + \|w-\Pi_{k}^{\nabla^2,F}w\|_{2,F} + \|\varphi-\mathrm{I}_{1,h}^{\mathrm{P}}\varphi\|_{1,F} + \|w-\mathrm{I}_{2,h}^{\mathrm{P}}w\|_{2,F}\bigg)\bigg].
    \end{align*}
\end{theorem}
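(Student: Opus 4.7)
The plan is the classical two-step Strang-type decomposition adapted to the double saddle-point setting. I would first apply the triangle inequality
\[
\mathrm{e}_{\vec{x}_h} \;\leq\; \mathrm{e}_{\vec{x}^*} \;+\; \mathrm{e}_{\vec{x}_h^*},
\]
so that the interpolation error $\mathrm{e}_{\vec{x}^*}$ contributes directly the terms $\|\bu-\bar{\mathbf{I}}_h^{\mathrm{S}}\bu\|_{1,K}$, $\|p-\Pi_{k-1}^{0,K}p\|_{0,K}$, $\|\varphi-\mathrm{I}_{1,h}^{\mathrm{P}}\varphi\|_{1,F}$ and $\|w-\mathrm{I}_{2,h}^{\mathrm{P}}w\|_{2,F}$ on the right-hand side. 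The substantive work is then reduced to estimating the \emph{discrete interpolation error} $\mathrm{e}_{\vec{x}_h^*}$, for which I would reproduce the well-posedness machinery of Theorem~\ref{th:disc-well-posed} applied to the difference $\vec{x}_h - \vec{x}_h^{I}$, where $\vec{x}_h^{I}:=(\bar{\mathbf{I}}_h^{\mathrm{S}}\bu,(\Pi_{k-1}^{0,K}p,\mathrm{I}_{1,h}^{\mathrm{P}}\varphi),\mathrm{I}_{2,h}^{\mathrm{P}}w)$.

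The key algebraic step is to subtract the continuous formulation \eqref{eq:weak} from the discrete formulation \eqref{dis-problem} evaluated at $\vec{x}_h - \vec{x}_h^{I}$, producing three residual equations whose right-hand sides collect: (i) data approximation residuals $F-F_h$, $G-G_h$, $M-M_h$; (ii) consistency residuals such as $a_h(\bar{\mathbf{I}}_h^{\mathrm{S}}\bu,\bv_h)-a(\bu,\bv_h)$ and analogous terms for $b_{1h}, c_{1h}, b_{2h}, b_{3h}, c_{2h}$. Each consistency residual I would split through an arbitrary element-wise polynomial $\bu_\pi\in\bP_k(K)$ (resp. $p_\pi,\varphi_\pi,w_\pi$) using the polynomial consistency of the discrete forms, giving bounds of the form $a_h^K(\bar{\mathbf{I}}_h^{\mathrm{S}}\bu-\bu_\pi,\bv_h)-a^K(\bu-\bu_\pi,\bv_h)$, controlled via Cauchy--Schwarz, the stabilisation bounds \eqref{s0}--\eqref{s5}, and Proposition~\ref{prop:est-poly}. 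Once these bounds are in place, I would invoke the discrete inf-sup of Theorem~\ref{dis-inf-sup} to control $\text{e}_{p_h^*}$ from the first residual equation, test the resulting system with $(\bv_h,(q_h,\psi_h),\zeta_h)=(\bu_h-\bar{\mathbf{I}}_h^{\mathrm{S}}\bu,(p_h-\Pi_{k-1}^{0,K}p,\varphi_h-\mathrm{I}_{1,h}^{\mathrm{P}}\varphi),w_h-\mathrm{I}_{2,h}^{\mathrm{P}}w)$, and combine with the coercivity bounds \eqref{ah:coer}, \eqref{c1h:pos}, \eqref{c2h:coer} exactly as in Lemma~\ref{lem:A+K} and Theorem~\ref{th:wellp}, using Young's inequality with a judiciously chosen $\epsilon\in(\tfrac{C_{s2}}{c_0},\tfrac{\rho_p}{\tau^2 C_{s4}})$ to absorb the indefinite $b_{3h}$ term.

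The principal obstacle I foresee is handling the bulk--surface coupling term $b_{1h}^{\Sigma}$ which, in the consistency splitting, produces residuals of the mixed form $\int_F(\psi-\Pi_{k-1}^{0,F}\psi)\,\bv_h\cdot\bn_\Sigma$ and $\int_F\Pi_{k-1}^{0,F}\psi\,(\bv_h-\bPi_k^{0,F}\bv_h)\cdot\bn_\Sigma$; these must be absorbed using the trace inequality together with the Fortin orthogonality \eqref{Fortin:ortho} and the $\bL^2$-error estimate \eqref{Fortin:est}, mirroring the argument used in the proof of Theorem~\ref{dis-inf-sup}. A secondary technical point is that the coupling bilinear forms $b_{2h},b_{3h}$ scale with $\|\cdot\|\,\|\cdot\|_{2,\Sigma}$ rather than with an energy norm, so closing the estimate requires the small mesh assumption $h<h_0$ already needed for the discrete inf-sup, ensuring that the cross-terms $[b_{2h}\pm b_{3h}]((\cdot,\cdot),\cdot)$ can be absorbed into the coercive part. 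Gathering the bounds yields the stated inequality with $C_{\mathrm{e}}$ depending on the continuity, coercivity, and inf-sup constants, the stabilisation constants $C_{s0},\ldots,C_{s5}$, the Fortin constant $\overline{C}_{\bar{\mathbf{I}}^{\mathrm{S}}}$, and the approximation constant $C_{\mathrm{apx}}$, but not on $h$.
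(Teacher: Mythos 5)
Your proposal is essentially the paper's proof: the triangle inequality split $\mathrm{e}_{\vec{x}_h}\le\mathrm{e}_{\vec{x}^*}+\mathrm{e}_{\vec{x}_h^*}$, the residual/consistency decomposition of the three right-hand sides, polynomial consistency of the stabilised forms combined with the continuity/coercivity constants, and absorption of the indefinite $b_{3h}$ contribution via Young's inequality. The one streamlining the paper makes is to observe that $\vec{\bx}_h^*$ is \emph{exactly} the unique discrete solution to \eqref{dis-problem} with data replaced by $(\check{F},\check{G},\check{M})$, so the already-proved stability estimate \eqref{eq:dis-dep} applies verbatim and the inf-sup/coercivity machinery need not be re-derived inline as you plan; your route reaches the same bound with more work, and your appeal to the Fortin orthogonality for the $b_{1h}^{\Sigma}$ residuals is unnecessary (it only plays a role through the already-established discrete inf-sup, not in the consistency bounds themselves).
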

\begin{proof}
The continuos problem \eqref{eq:weak} and the discrete problem \eqref{dis-problem} show that $\vec{\bx}_h^* = (\bu_h-\bar{\mathbf{I}}_{h}^{\mathrm{S}} \bu,(p_h - \Pi_{k-1}^{0,K} p,\varphi_h - \mathrm{I}_{1,h}^{\mathrm{P}} \varphi),w_h - \mathrm{I}_{2,h}^{\mathrm{P}} w)^{\top} \in \bV_h^k\times (Q_h^{k-1}\times R_h^{k-1})\times W_h^k$ is the unique solution to
\begin{align*}
        a_{h}(\bu_h-\bar{\mathbf{I}}_{h}^{\mathrm{S}} \bu,\bv_h)+b_{1h}(\bv_h,(p_h - \Pi_{k-1}^{0} p,\varphi_h - \mathrm{I}_{1,h}^{\mathrm{P}} \varphi))&= \check{F}(\bv_h),\\
        b_{1h}(\bu_h-\bar{\mathbf{I}}_{h}^{\mathrm{S}} \bu,(q_h,\psi_h))-c_{1h}((p_h - \Pi_{k-1}^{0} p,\varphi_h - \mathrm{I}_{1,h}^{\mathrm{P}} \varphi),(q_h,\psi_h))& \notag \\ +(b_{2h}+b_{3h})((q_h,\psi_h),w_h - \mathrm{I}_{2,h}^{\mathrm{P}} w)&=\check{G}((q_h,\psi_h)),\\
        (b_{2h}-b_{3h})((p_h - \Pi_{k-1}^{0} p,\varphi_h - \mathrm{I}_{1,h}^{\mathrm{P}} \varphi),\zeta_h)+c_{2h}(w_h - \mathrm{I}_{2,h}^{\mathrm{P}} w,\zeta_h)&= \check{M}(\zeta_h),
\end{align*}
for all $\bv_h\in\bV_h^k$, $(q_h,\psi_h)\in \rQ_h^{k-1}\times \rR_h^{k-1}$, $\zeta_h \in \rW_h^k$. Here, the global polynomial projection of $p$ is given by $\left. \Pi_{k-1}^{0} p\right|_K := \Pi_{k-1}^{0,K} p$. Whereas, the new discrete right-hand sides are defined by
\begin{subequations}\label{check_RHS}
\begin{align}
        \check{F}(\bv_h) &:= \left[a(\bu,\bv_h) - a_{h}(\bar{\mathbf{I}}_{h}^{\mathrm{S}} \bu, \bv_h)\right]
        \notag \\
        & \quad +\, \left[b_1(\bv_h,(p,\varphi)) - b_{1h}(\bv_h,(\Pi_{k-1}^{0} p,\mathrm{I}_{1,h}^{\mathrm{P}} \varphi))\right] \notag \\
        & \quad +\, (F_h - F)(\bv_h)\notag \\
        &=: \check{F}_1 + \check{F}_2 + \check{F}_3,\label{check_F}\\
        \check{G}((q_h,\psi_h)) &:= \left[b_1(\bu, (q_h,\psi_h)) - b_{1h}(\bar{\mathbf{I}}_{h}^{\mathrm{S}} \bu, (q_h,\psi_h))\right]\notag \\
        &\quad +\, \left[-c_1((p,\varphi),(q_h,\psi_h)) + c_{1h}((\Pi_{k-1}^{0} p,\mathrm{I}_{1,h}^{\mathrm{P}} \varphi),(q_h,\psi_h))\right] \notag \\
        &\quad +\, \left[b_2((q_h.\psi_h),w) - b_{2h}((q_h,\psi_h),\mathrm{I}_{2,h}^{\mathrm{P}} w)\right]\notag \\
        &\quad +\, \left[b_3((q_h.\psi_h),w) - b_{3h}((q_h,\psi_h),\mathrm{I}_{2,h}^{\mathrm{P}} w)\right] \notag \\
        &\quad +\, (G_h-G)((q_h,\psi_h))\notag \\
        &=: \check{G}_1 + \check{G}_2 + \check{G}_3 + \check{G}_4 + \check{G}_5,\label{check_G}\\
        \check{M}(\zeta_h) &:= \left[b_2((p,\varphi),\zeta_h) - b_{2h}((\Pi_{k-1}^{0} p,\mathrm{I}_{1,h}^{\mathrm{P},F} \varphi),\zeta_h)\right] \notag \\
        &\quad +\, \left[-b_3((p,\varphi),\zeta_h) + b_{3h}((\Pi_{k-1}^{0} p,\mathrm{I}_{1,h}^{\mathrm{P}} \varphi),\zeta_h)\right] \notag \\
        &\quad +\, \left[c_2(w,\zeta_h)-c_{2h}(\mathrm{I}_{2,h}^{\mathrm{P}} w,\zeta_h)\right]\notag \\
        &\quad +\, (M_h-M)(\zeta_h) \notag \\
        &=: \check{M}_1 + \check{M}_2 + \check{M}_3 + \check{M}_4.\label{check_M}
\end{align}
\end{subequations}
Therefore, the discrete dependence on data \eqref{eq:dis-dep} implies that
\begin{align*}
\mathrm{e}_{\vec{\bx}_h^*} \leq C \left( \|\check{F}\|_{(\bV_h^k)'} + \|\check{G}\|_{(\rQ_h^{k-1}\times \rR_h^{k-1})'} + \|\check{M}\|_{(\rW_h^k)'}\right).
\end{align*}
Next we aim at bounding the functionals  in \eqref{check_RHS}. The consistency with respect to polynomials of the stabilised terms that define the local  forms in \eqref{discrete_forms} are given as follows for all $K\in \Omega_h$, and $F\in\Sigma_h$
\begin{subequations}\label{const}
\begin{align}
a_h^{0,K}(\bPi_k^{0,K}\bu,\bv_h) &= a^{0,K}(\bPi_k^{0,K}\bu,\bv_h), \label{const_a11}\\  a_h^{\nabla,K}(\bPi_k^{\nabla,K}\bu,\bv_h) &= a^{\nabla,K}(\bPi_k^{\nabla,K}\bu,\bv_h),\label{const_a12}\\
c_{1h}^{0,F}((p_h,\Pi_{k-1}^{0,F}\varphi),(q_h,\psi_h)) &= c_{1}^{0,F}((p_h,\Pi_{k-1}^{0,F}\varphi),(q_h,\psi_h)),\label{const_c11}\\
c_{1h}^{\nabla,F}((p_h,\Pi_{k-1}^{\nabla,F}\varphi),(q_h,\psi_h)) &= c_{1}^{\nabla,F}((p_h,\Pi_{k-1}^{\nabla,F}\varphi),(q_h,\psi_h)),\label{const_c12}\\
c_{2h}^{0,F}(\Pi_k^{0,F}w,\zeta_h) &= c_{2}^{0,F}(\Pi_k^{0,F}w,\zeta_h),\label{const_c21}\\ c_{2h}^{\nabla^2,F}(\Pi_k^{\nabla^2,F}w,\zeta_h) &= c_{2}^{\nabla^2,F}(\Pi_k^{\nabla^2,F}w,\zeta_h). \label{const_c22}
\end{align}
\end{subequations}
The previous identities follow from the polynomial projections defined in \eqref{L2-proj}-\eqref{nabla2_proj}, and the discrete bilinear forms given in \eqref{discrete_forms}. In addition, the following equality holds due to the triple scalar rule and the anti-commutative properties
\begin{align}\label{normal_term}
    \int_F (\bPi_{k}^{0,F}\bu\times \bn_{\Sigma}^F) \cdot \left((\bv_h-\bPi_{k}^{0,F}\bv_h)\times \bn_{\Sigma}^F\right) &= - \int_F (\bPi_{k}^{0,F}\bu\times \bn_{\Sigma}^F) \cdot \left(\bn_{\Sigma}^F \times (\bv_h-\bPi_{k}^{0,F}\bv_h)\right) \notag \\
    &= -\int_F \left((\bPi_{k}^{0,F}\bu\times \bn_{\Sigma}^F)\times \bn_\Sigma\right) \cdot \left(\bv_h-\bPi_{k}^{0,F}\bv_h)\right) \notag \\
    &= 0.
\end{align}
Next, the triangle inequality, \eqref{const_a11}-\eqref{const_a12}, \eqref{normal_term}, and the idempotency of the polynomial projections applied to $\check{F}_1$, and $\check{F}_2$ lead to
\begin{align*}
    \left|\check{F}_1 \right| &\leq \sum_{K\in \Omega_h} \left| a^{0,K}(\bu-\bPi_k^{0,K}\bu,\bv_h) - a_h^{0,K}(\bar{\mathbf{I}}_{h}^{\mathrm{S}} \bu-\bPi_k^{0,K}\bu, \bv_h) \right| \\
    &\quad + \sum_{K\in \Omega_h} \left| a^{\nabla,K}(\bu-\bPi_k^{\nabla,K}\bu,\bv_h) - a_h^{\nabla,K}(\bar{\mathbf{I}}_{h}^{\mathrm{S}} \bu-\bPi_k^{\nabla,K}\bu, \bv_h) \right| \\
    &\quad + \sum_{K\in \Omega_h} \left| a^{\Sigma,K}(\bu-\bPi_{k}^{0,F}\bu,\bv_h) - a_h^{\Sigma,K}(\bar{\mathbf{I}}_{h}^{\mathrm{S}} \bu-\bPi_{k}^{0,F}\bu,\bv_h) \right|, \\
    \left|\check{F}_2 \right| &\leq \sum_{K\in \Omega_h} \left| b_1^{\vdiv,K}(\bv_h, (p-\Pi_{k-1}^{0,K}p,\mathrm{I}_{1,h}^{\mathrm{P}}\varphi))  \right|\\
    &\quad + \sum_{K\in \Omega_h} \left| b_{1}^{\Sigma,K}(\bv_h,(p,\varphi-\Pi_{k-1}^{0,F}\varphi)) - b_{1h}^{\Sigma,K}(\bv_h,(\Pi_{k-1}^{0,K} p,\mathrm{I}_{1,h}^{\mathrm{P}}\varphi-\Pi_{k-1}^{0,F}\varphi) ) \right|.
\end{align*}
Using \eqref{a:bound} and \eqref{b1:bound} for $a(\cdot,\cdot)$ and $b_{1h}(\cdot,\cdot)$ (resp. \eqref{ah:bound} and \eqref{b1h:bound} for $a_h(\cdot,\cdot)$ and $b_{1h}(\cdot,\cdot)$), we can readily obtain that
\begin{align}\label{eq:checkFBound}
    \|\check{F}\|_{(\bV_h^k)'} &\leq C_{\check{F}} \hspace{-0.05cm} \sum_{K\in \Omega_h} \biggl( \|F^K - F_h^K\|_{(\bV_h^k(K))'} + \|\bu-\bPi_k^{0,K}\bu\|_{1,K} + \|\bu-\bPi_k^{\nabla,K}\bu\|_{1,K}\hspace{-0.05cm} + \hspace{-0.05cm}\|\bu-\bar{\mathbf{I}}_{h}^{\mathrm{S}}\bu\|_{1,K} \notag\\
    &\quad +\, \|p-\Pi_{k-1}^{0,K}p\|_{0,K} + \!\!\!\sum_{F\in \partial K \cap \Sigma_h} \bigl( \|\bu-\bPi_{k}^{0,F}\bu\|_{0,F} +\, \|\varphi-\Pi_{k-1}^{0,F}\varphi\|_{1,F} + \|\varphi-\mathrm{I}_{1,h}^{\mathrm{P}}\varphi\|_{1,F} \bigr) \biggr)
\end{align}
with $C_{\check{F}} = \max\{\frac{\rho_f}{\tau}(1+C_{s0}),\mu (1+C_{s1}),\gamma C_T,1,C_T\}$. To address \eqref{check_G}, the triangle inequality, basic manipulations, \eqref{const_c11}-\eqref{const_c12}, and the definitions in \eqref{L2-proj}-\eqref{nabla_proj} imply that
\begin{align*}
    \left|\check{G}_1 \right| &\leq \sum_{K\in \Omega_h} \left| b_1^{\vdiv,K}(\bu-\bar{\mathbf{I}}_{h}^{\mathrm{S}} \bu, (q_h,\psi_h))  \right|\\
    &\quad + \sum_{K\in \Omega_h} \left| b_{1}^{\Sigma,K}(\bu - \bPi_{k}^{0,F}\bu,(q_h,\psi_h)) - b_{1h}^{\Sigma,K}(\bar{\mathbf{I}}_{h}^{\mathrm{S}} \bu - \bPi_{k}^{0,F}\bu,(q_h,\psi_h) ) \right|,\\
    \left|\check{G}_2 \right| &\leq \sum_{F\in \partial K \cap \Sigma_h} \left| c_1^{0,F}((p,\varphi-\Pi_{k-1}^{0,F}\varphi),(q_h,\psi_h)) - c_{1h}^{0,F}((\Pi_{k-1}^{0,K}p,\mathrm{I}_{1,h}^{\mathrm{P}}\varphi-\Pi_{k-1}^{0,F}\varphi),(q_h,\psi_h)) \right| \\
    &\quad + \sum_{F\in \partial K \cap \Sigma_h} \left| c_1^{\nabla,F}((p,\varphi-\Pi_{k-1}^{\nabla,F}\varphi),(q_h,\psi_h)) - c_{1h}^{\nabla,F}((\Pi_{k-1}^{0,K}p,\mathrm{I}_{1,h}^{\mathrm{P}}\varphi-\Pi_{k-1}^{\nabla,F}\varphi),(q_h,\psi_h)) \right|, \\
    \left|\check{G}_3 \right| &\leq \frac{\alpha}{\tau} \sum_{F\in \partial K \cap \Sigma_h} \left| -\int_F \nabla_\Sigma \psi_h \cdot \nabla_\Sigma w + \int_F \bPi_{k-2}^{0,F}(\nabla_\Sigma\psi_h)\cdot\bPi_{k-1}^{0,F}(\nabla_\Sigma (\mathrm{I}_{2,h}^{\mathrm{P}} w)) \right|\\
    &=\hspace{-0.05cm}\frac{\alpha}{\tau} \hspace{-0.05cm} \sum_{F\in \partial K \cap \Sigma_h} \left| -\hspace{-0.05cm} \int_F \hspace{-0.05cm} \nabla_\Sigma \psi_h \cdot \nabla_\Sigma (w-\Pi_{k}^{\nabla,F}w)\hspace{-0.05cm} + \hspace{-0.05cm}\int_F \hspace{-0.05cm} \bPi_{k-2}^{0,F}(\nabla_\Sigma\psi_h)\cdot \bPi_{k-1}^{0,F}(\nabla_\Sigma (\mathrm{I}_{2,h}^{\mathrm{P}} w -\Pi_{k}^{\nabla,F}w)) \right|,\\
    \left|\check{G}_4 \right| &\leq \frac{1}{\tau} \sum_{F\in \partial K \cap \Sigma_h} \left| -\int_F w \psi_h + \int_F \Pi_{k}^{0,F}(\mathrm{I}_{2,h}^{\mathrm{P}} w) \Pi_{k-1}^{0,F} \psi_h \right|\\
    &=\frac{1}{\tau} \sum_{F\in \partial K \cap \Sigma_h} \left| -\int_F (w-\Pi_{k}^{0,F}w) \psi_h + \int_F \Pi_{k}^{0,F}(\mathrm{I}_{2,h}^{\mathrm{P}} w-\Pi_{k}^{0,F}w) \Pi_{k-1}^{0,F} \psi_h \right|.
\end{align*}
In turn, the bounds in \eqref{c1:bound}, \eqref{b1:bound}, \eqref{b2:bound}, and \eqref{b3:bound} for $c_1(\cdot,\cdot)$, $b_1(\cdot,\cdot)$, $b_2(\cdot,\cdot)$, and $b_3(\cdot,\cdot)$ (resp. \eqref{c1h:bound}, \eqref{b1h:bound}, \eqref{b2h:bound}, and \eqref{b3h:bound} for $c_{1h}(\cdot,\cdot)$, $b_{1h}(\cdot,\cdot)$, $b_{2h}(\cdot,\cdot)$, and $b_{3h}(\cdot,\cdot)$) inspire the following estimate
\begin{align*}
    \|\check{G}\|_{(\rQ_h^{k-1}\times \rR_h^{k-1})'} &\leq C_{\check{G}} \sum_{K\in \Omega_h} \biggl( \|\bu-\bar{\mathbf{I}}_{h}^{\mathrm{S}}\bu\|_{1,K} + \sum_{F\in \partial K \cap \Sigma_h} \bigl( \|G^F - G_h^F\|_{(\rQ_h^{k-1}(K)\times \rR_h^{k-1}(F))'}  \notag \\
    &\quad  +\, \|\bu-\bPi_{k}^{0,F}\bu\|_{0,F} +  \|\varphi-\Pi_{k-1}^{0,F}\varphi\|_{1,F} + \|\varphi-\Pi_{k-1}^{\nabla,F}\varphi\|_{1,F} + \|\varphi-\mathrm{I}_{1,h}^{\mathrm{P}}\varphi\|_{1,F} \notag\\
    &\quad  +\, |w-\Pi_{k}^{0,F}w\|_{2,F} + \|w-\Pi_{k}^{\nabla,F}w\|_{2,F} + \|w-\mathrm{I}_{2,h}^{\mathrm{P}}w\|_{2,F}\bigr) \biggr)
\end{align*}
with $C_{\check{G}} = \max\{\frac{c_0}{\tau}(1+C_{s2}),\kappa (1+C_{s3}),1,C_T,\frac{\alpha}{\tau},\frac{1}{\tau}\}$. Finally, the $\check{M}_i$'s ($i=1,2,3$) are bounded as follows
\begin{align*}
    \left|\check{M}_1 \right| &\leq \frac{\alpha}{\tau} \sum_{F\in \partial K \cap \Sigma_h} \left| -\int_F \nabla_\Sigma \varphi \cdot \nabla_\Sigma \zeta_h + \int_F \bPi_{k-2}^{0,F}(\nabla_\Sigma (\mathrm{I}_{1,h}^{\mathrm{P}} \varphi))\cdot\bPi_{k-1}^{0,F}(\nabla_\Sigma \zeta_h) \right|\\
    &=\hspace{-0.05cm}\frac{\alpha}{\tau}\hspace{-0.05cm} \sum_{F\in \partial K \cap \Sigma_h} \left| -\int_F \nabla_\Sigma (\varphi-\Pi_{k-1}^{\nabla,F}\varphi) \cdot \nabla_\Sigma w + \int_F \bPi_{k-2}^{0,F}(\nabla_\Sigma(\mathrm{I}_{1,h}^{\mathrm{P}} \varphi-\Pi_{k-1}^{\nabla,F}\varphi))\cdot \bPi_{k-1}^{0,F}(\nabla_\Sigma \zeta_h) \right|,\\
    \left|\check{M}_2 \right| &\leq \frac{1}{\tau} \sum_{F\in \partial K \cap \Sigma_h} \left| \int_F \zeta_h \varphi - \int_F \Pi_{k}^{0,F}\zeta_h \Pi_{k-1}^{0,F} (\mathrm{I}_{1,h}^{\mathrm{P}}\varphi)\right|\\
    &=\frac{1}{\tau} \sum_{F\in \partial K \cap \Sigma_h} \left| \int_F \zeta_h(\varphi-\Pi_{k-1}^{0,F}\varphi) + \int_F \Pi_{k}^{0,F}\zeta_h \Pi_{k-1}^{0,F}(\mathrm{I}_{1,h}^{\mathrm{P}} \varphi- \Pi_{k-1}^{0,F}\varphi) \right|, \\
    \left|\check{M}_3 \right| &\leq \sum_{F\in \partial K \cap \Sigma_h} \left| c_2^{0,F}(w-\Pi_k^{0,K}w,\zeta_h) - c_{2h}^{0,F}(\mathrm{I}_{2,h}^{\mathrm{P}} w-\Pi_{k}^{0,F}w,\zeta_h) \right| \\
    &\quad + \sum_{F\in \partial K \cap \Sigma_h} \left| c_2^{\nabla^2,F}(w-\Pi_k^{\nabla^2,K}w,\zeta_h) - c_{2h}^{\nabla^2,F}(\mathrm{I}_{2,h}^{\mathrm{P}} w-\Pi_{k}^{\nabla^2,F}w,\zeta_h),\zeta_h) \right|,
\end{align*}
where \eqref{L2-proj}-\eqref{nabla2_proj}, and \eqref{const_c21}-\eqref{const_c22} were used. From \eqref{b2:bound}, \eqref{b3:bound}, and \eqref{c2:bound} for $b_2(\cdot,\cdot)$, $b_3(\cdot,\cdot)$, and $c_2(\cdot,\cdot)$ (resp. \eqref{b2h:bound}, \eqref{b3h:bound}, and \eqref{c2h:bound} for $b_{2h}(\cdot,\cdot)$, $b_{3h}(\cdot,\cdot)$, and $c_{2h}(\cdot,\cdot)$) we readily see
\begin{align}\label{eq:checkMBound}
    \|\check{M}\|_{(\rW_h^k)'} &\leq C_{\check{M}} \sum_{K\in \Omega_h} \sum_{F\in \partial K \cap \Sigma_h} \bigl( \|M^F-M_h^F\|_{(\rW_h^k(F))'} + \|\varphi-\Pi_{k-1}^{0,F}\varphi\|_{1,F} + \|\varphi-\Pi_{k-1}^{\nabla,F}\varphi\|_{1,F} \notag \\
    & \quad +\, \|\varphi-\mathrm{I}_{1,h}^{\mathrm{P}}\varphi\|_{1,F} + \|w-\Pi_{k}^{0,F}w\|_{2,F} + \|w-\Pi_{k}^{\nabla^2,F}w\|_{2,F} + \|w-\mathrm{I}_{2,h}^{\mathrm{P}}w\|_{2,F}\bigl)
\end{align}
with $C_{\check{M}} = \max \{\frac{\rho_p}{\tau^3}(1+C_{s4}),\frac{D}{\tau} (1+C_{s5}),\frac{\alpha}{\tau}\}$. The proof is completed after  specifying the constant $C_{\mathrm{e}}:=\max \{ C_{\check{F}}, C_{\check{G}}, C_{\check{M}}\}$ together with \eqref{eq:checkFBound}-\eqref{eq:checkMBound}, and applying the inverse triangle inequality $\mathrm{e}_{\vec{x}_h}-\mathrm{e}_{\vec{x}^*}\leq \mathrm{e}_{\vec{x}_h^*}$. 
\end{proof}
We finalise by stating the convergence result of the virtual element scheme.
\begin{corollary}[convergence rates]\label{th:convergence}
   Under the small $h$ assumption, let $\vec{x}=(\bu,(p,\varphi),w)^{\top}\in \bH^{s+1}(\Omega)\times (\rH^{s}(\Omega)\times \rH^{r_1}(\Sigma))\times \rH^{r_2}(\Sigma)$ and $\vec{x}_h\in \bbX_h$ be the unique solutions to \eqref{eq:weak} and \eqref{dis-problem}, respectively. Moreover, assume that $\mathbf{f}\in \bH^{s-1}(\Omega)$, $g\in \rH^{r_1-2}(\Sigma)$, and $m\in\rH^{r_2-4}(\Sigma)$ with $0\leq s\leq k$, $1\leq r_1 \leq k$, and $2\leq r_2 \leq k+1$. Then, the total error decays with the following rate
    $$\mathrm{e}_{\vec{x}_h} \leq \hat{C}_{\mathrm{e}} h^{\hat{r}} \left( |\bu|_{s+1,\Omega} + |p|_{s,\Omega} + |\varphi|_{r_1-1,\Sigma} + |w|_{r_2-2,\Sigma} + |\mathbf{f}|_{s-1,\Omega} + |g|_{r_2-2,\Sigma} + |m|_{r_1-4,\Sigma} \right), $$
    where $\hat{r}=\min\{s,r_1-1,r_2-2\}$ and $\hat{C}_{\mathrm{e}} = C_{\mathrm{e}}\max\{C_{\bar{\mathbf{I}}^{\mathrm{S}}},C_{1,\mathrm{I}^P},C_{2,\mathrm{I}^P},C_{\mathrm{apx}}\}>0$.
\end{corollary}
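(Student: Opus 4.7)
The plan is to invoke Theorem~\ref{lemm:errorForApriori} and bound each of the approximation, interpolation, and data-oscillation terms on its right-hand side separately, using the tools already developed in Propositions~\ref{prop:est-poly}, \ref{prop:fortin} and \ref{prop:int_plate}. Since the estimate is already element-wise local, the argument reduces to bookkeeping of the exponents and an application of the discrete Cauchy--Schwarz inequality when summing over $K\in\Omega_h$ and $F\in\Sigma_h$.

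First I would handle the polynomial-projection terms. The $\bL^2$ and $\bH^1$ projection errors $\|\bu-\bPi_k^{0,K}\bu\|_{1,K}$ and $\|\bu-\bPi_k^{\nabla,K}\bu\|_{1,K}$ are bounded by $C_{\mathrm{apx}}\,h_K^{s}|\bu|_{s+1,K}$ through Proposition~\ref{prop:est-poly} (noting that the seminorm projection is quasi-optimal in $\rH^1$). The same proposition yields $\|p-\Pi_{k-1}^{0,K}p\|_{0,K}\lesssim h_K^{s}|p|_{s,K}$, the trace-type term $\|\bu-\bPi_{k}^{0,F}\bu\|_{0,F}\lesssim h_K^{s+1/2}|\bu|_{s+1,K}$, and, on the surface, $\|\varphi-\Pi_{k-1}^{0,F}\varphi\|_{1,F}+\|\varphi-\Pi_{k-1}^{\nabla,F}\varphi\|_{1,F}\lesssim h_F^{r_1-1}|\varphi|_{r_1,F}$ together with $\|w-\Pi_{k}^{0,F}w\|_{2,F}+\|w-\Pi_{k}^{\nabla^{2},F}w\|_{2,F}\lesssim h_F^{r_2-2}|w|_{r_2,F}$.

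Next I would treat the genuine interpolation contributions. Proposition~\ref{prop:fortin}, estimate \eqref{Fortin:est}, gives $\|\bu-\bar{\mathbf{I}}_{h}^{\mathrm{S}}\bu\|_{1,K}\leq C_{\bar{\mathbf{I}}^{\mathrm{S}}}\,h_K^{s}|\bu|_{s+1,K}$, while Proposition~\ref{prop:int_plate} supplies $\|\varphi-\mathrm{I}_{1,h}^{\mathrm{P}}\varphi\|_{1,F}\leq C_{\mathrm{I}^{\mathrm{P}}_{1}}\,h_F^{r_1-1}|\varphi|_{r_1,F}$ and $\|w-\mathrm{I}_{2,h}^{\mathrm{P}}w\|_{2,F}\leq C_{\mathrm{I}^{\mathrm{P}}_{2}}\,h_F^{r_2-2}|w|_{r_2,F}$. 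For the data-oscillation terms, I would exploit the definitions of $F_h^K$, $G_h^F$ and $M_h^F$ and insert polynomial projections of the data to obtain, by the orthogonality of $\bPi_{k}^{0,K}$ against $\bP_{k-1}(K)$, identities such as $(F^K-F_h^K)(\bv_h)=(\ff-\Pi_{k-1}^{0,K}\ff,\,\bv_h-\bPi_{k}^{0,K}\bv_h)_K$. Combining Cauchy--Schwarz, the stability of $\bPi_{k}^{0,K}$ in $\bH^1$, and Proposition~\ref{prop:est-poly} then yields $\|F^K-F_h^K\|_{(\bV_h^k(K))'}\lesssim h_K^{s}|\ff|_{s-1,K}$, and analogous manipulations provide the surface counterparts $\|G^F-G_h^F\|\lesssim h_F^{r_1-1}|g|_{r_1-2,F}$ and $\|M^F-M_h^F\|\lesssim h_F^{r_2-2}|m|_{r_2-4,F}$.

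Substituting these local estimates back into Theorem~\ref{lemm:errorForApriori}, summing over mesh entities (the bounded valence from \ref{M1}--\ref{M3} keeps the overlap finite), and invoking the discrete Cauchy--Schwarz inequality to merge the element-wise semi-norms, one retains only the slowest of the three rates, namely $\hat{r}=\min\{s,\,r_1-1,\,r_2-2\}$. Collecting the multiplicative constants into $\hat{C}_{\mathrm{e}}:=C_{\mathrm{e}}\max\{C_{\bar{\mathbf{I}}^{\mathrm{S}}},C_{\mathrm{I}^{\mathrm{P}}_{1}},C_{\mathrm{I}^{\mathrm{P}}_{2}},C_{\mathrm{apx}}\}$ delivers the desired estimate. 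The main obstacle I expect is not conceptual but bookkeeping: carefully checking that for each data term the inserted polynomial projection has the right degree so that the approximation order matches the regularity index announced in the statement (in particular the rate $h^{r_2-2}$ arising from $m$ tested against the $\Pi_{k}^{0,F}$ projection on $\rW_h^k$), and ensuring that the trace term $\|\bu-\bPi_{k}^{0,F}\bu\|_{0,F}$, which carries the extra half power of $h$, does not degrade the global rate once summed over all interface faces.
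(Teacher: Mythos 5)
Your proposal is correct and follows essentially the same route as the paper: plug the local approximation bounds from Propositions~\ref{prop:est-poly}, \ref{prop:fortin}, \ref{prop:int_plate} together with the data-oscillation estimates into Theorem~\ref{lemm:errorForApriori} and sum over elements. Your derivation of the data-oscillation rates via inserted polynomial projections and $\rL^2$-orthogonality is what the paper delegates to the cited references, and your pairing of $g$ with $r_1$ and $m$ with $r_2$ is the internally-consistent reading of the stated regularity assumptions.
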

\begin{proof}
    The data approximation terms can be bounded similarly as in \cite[Lemma 3.2]{daveiga15-stokes} and  \cite[Theorem 4.1]{khot23}: For all $K\in \Omega_h$ and all $F\in  \Sigma_h$, 
    \begin{align*}
        \|F^K - F_h^K\|_{(\bV_h^k(K))'} &\leq h^s |\mathbf{f}|_{s-1,K}, \\
        \|G^F - G_h^F\|_{(\rQ_h^{k-1}(K)\times \rR_h^{k-1}(F))'} &\leq h^{r_2-1}|g|_{r_2-2,F},\\
        \|M^F-M_h^F\|_{(\rW_h^k(F))'} &\leq h^{r_1-2}|m|_{r_1-4,F}.
    \end{align*}
    This, together with Propositions~\ref{prop:est-poly}, \ref{prop:fortin}, and \ref{prop:int_plate} applied to Theorem~\ref{lemm:errorForApriori}, finishes the proof.
\end{proof}
\section{Implementation}\label{sec:implementation}
The numerical implementation has been done with the library \texttt{VEM++} \cite{dassi2023vem++}, a \texttt{C++} based \ac{vem} solver. The code has a hierarchical structure, which considers 3D polyhedral elements that contain 2D polygonal faces (an element in 2D), all the corresponding edges, and points. Since the faces of each polyhedron are tightly integrated into the definition and corresponding \acp{dof} of a 3D VE space, defining an independent 2D VE space on the same mesh becomes non-trivial.  

To avoid this complexity at the time, we separately implemented the bulk part corresponding to \eqref{eq:weak1}-\eqref{eq:weak2} and the surface part in \eqref{eq:weak3}-\eqref{eq:weak5}. \texttt{VEM++} provides functionality to generate a 2D mesh from a given surface of a 3D mesh with the class \texttt{vem::mesh2dGeneratorFromMesh3d}. The resulting 2D and 3D geometries are then linked by the class \texttt{vem::mesh2dAnd3dConnectorData}, which establishes a bijective mapping between their corresponding points and elements. A schematic overview of this process is shown in Figure~\ref{fig:sketch_classes}. Then, the coupled problem \eqref{dis-problem} is solved with an optimised Picard iteration (fixed-point). The rest of this section is dedicated to prove the convergence of this alternative method to the target solution.

\begin{figure}[t!]
    \centering
    \includegraphics[width=0.8\textwidth,trim={0.4cm .5cm 5.cm 0.5cm},clip]{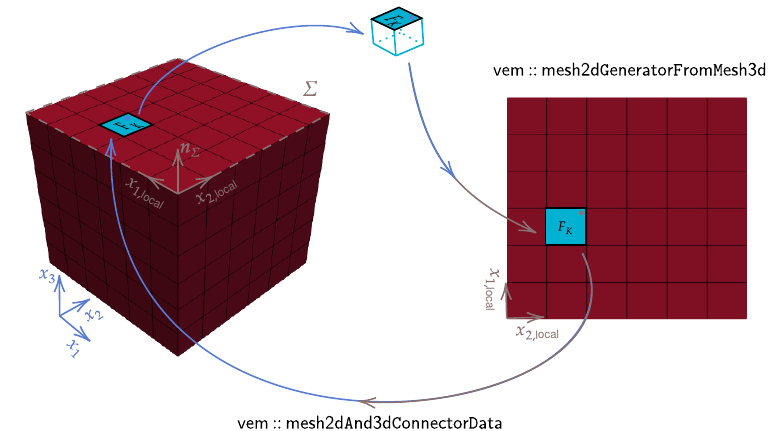}
    \caption{Diagram for creation of a 2D mesh from the surface of a 3D mesh, the bijective map between entities in 3D and 2D is shown with blue and grey arrows.}\label{fig:sketch_classes}
\end{figure}

Define the two sub-problems to be solved as follows
\begin{subequations}
\begin{align}
    \begin{pmatrix}
        \bA_h & (\bB_{1h}^{\vdiv})^* \\
        \bB_{1h}^{\vdiv} & \cero
    \end{pmatrix}
    \begin{pmatrix}
        \bu_h \\
        p_h 
    \end{pmatrix} 
    &= 
    \begin{pmatrix}
        F_h - (\bB_{1h}^{\Sigma,\vartheta_h})^* \\ 
        \cero 
    \end{pmatrix} 
    & \text{in} \quad (\bV_h^k \times \rQ_h^{k-1})',\label{eq:operatorBulk} \\
    \begin{pmatrix}
        - \bC_{1h} & \bB_{2h}^* + \bB_{3h}^* \\
        \bB_{2h}\,-\bB_{3h}  & \bC_{2h}
    \end{pmatrix}
    \begin{pmatrix}
    \varphi_h \\ 
    w_h
    \end{pmatrix}
    &= 
    \begin{pmatrix}
    G_h - \bB_{1h}^{\Sigma,\br_h}\\ 
    M_h    
    \end{pmatrix} 
    & \text{in} \quad (\rR_h^{k-1}\times \rW_h^k)'. \label{eq:operatorPlate} 
    \end{align}\end{subequations}
Note that the coupling terms $(\bB_{1h}^{\Sigma,\psi_h})^*$ and $\bB_{1h}^{\Sigma,\bv_h}$ consider their respective coupling variables $\vartheta_h$, and $\br_h$ as input data. The well-posedness
of these decoupled problems is provided next.
\begin{lemma}\label{dec_well_posed}
    Given the data $\ff\in \bL^2(\Omega)$, $g\in \rL^2(\Sigma)$, $m \in \rL^2(\Sigma)$, and the coupling terms $\vartheta_h\in \rR_h^{k-1}$, and $\br_h \in \bV_h^k$, there exist unique solutions $(\bu_h,p_h)^{\top} \in \bV_h^{h}\times \rQ_h^{k-1}$, and $(\varphi_h,w_h)^{\top} \in \rR_h^{k-1} \times \rW_h^{k}$ to \eqref{eq:operatorBulk} and \eqref{eq:operatorPlate}, respectively. Moreover, there exist two positive constants $\hat{C}_1$ and $\hat{C}_2$ such that 
    \begin{subequations}\begin{align}
        \|\bu_h\|_{1,\Omega} + \|p_h\|_{0,\Omega} &\leq \hat{C}_1 \left( \|\ff\|_{0,\Omega} + {C_T}\|\vartheta_h\|_{1,\Sigma}\right),\label{continuous_dec_bulk}\\
        \|\varphi_h\|_{1,\Sigma} + \|w_h\|_{2,\Sigma} &\leq \hat{C}_2 \left( \|g\|_{0,\Sigma} + \|m\|_{0,\Sigma} + {C_T}\|\br_h\|_{1,\Omega}\label{continuous_dec_plate}\right).
    \end{align}\end{subequations}
\end{lemma}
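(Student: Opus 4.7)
The strategy is to treat the two decoupled sub-problems independently: in each one, the variable from the other component enters as prescribed data, whose contribution to the right-hand side is controlled via the trace inequality (constant $C_T$).

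For the bulk problem \eqref{eq:operatorBulk}, I would recognise a standard discrete Stokes saddle-point system with no pressure penalty. The symmetry, continuity, and coercivity of $a_h$ over $\bV_h^k$ are granted by \eqref{ah:bound}-\eqref{ah:coer}, while the continuity of $b_{1h}^{\vdiv}$ is immediate from \eqref{b1h:bound}. The only missing ingredient is a discrete inf-sup condition for $b_{1h}^{\vdiv}$, which I would derive from the Fortin construction in Proposition~\ref{prop:fortin}: given $q_h\in \rQ_h^{k-1}$, use the surjectivity of $\vdiv:\bH^1_\star(\Omega)\to \rL^2(\Omega)$ (valid in the present mixed boundary condition setting, cf. the proof of Lemma~\ref{lem:inf-sup}) to produce $\bv\in\bH^1_\star(\Omega)$ with $\vdiv\bv=-q_h$ and $\|\bv\|_{1,\Omega}\lesssim\|q_h\|_{0,\Omega}$. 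The orthogonality \eqref{Fortin:ortho} gives $b_{1h}^{\vdiv}(\bar{\mathbf{I}}_h^{\mathrm{S}}\bv,q_h)=\|q_h\|_{0,\Omega}^2$, and the boundedness \eqref{Fortin:bound} provides $\|\bar{\mathbf{I}}_h^{\mathrm{S}}\bv\|_{1,\Omega}\lesssim\|q_h\|_{0,\Omega}$. Together with $|F_h(\bv_h)|\lesssim\|\ff\|_{0,\Omega}\|\bv_h\|_{1,\Omega}$ (from the $\bL^2$-stability of $\bPi_k^{0,K}$) and $|b_{1h}^{\Sigma}(\bv_h,\vartheta_h)|\lesssim C_T\|\vartheta_h\|_{1,\Sigma}\|\bv_h\|_{1,\Omega}$, a direct application of Theorem~\ref{th:perturbed} with trivial penalty $c\equiv 0$ delivers \eqref{continuous_dec_bulk}.

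For the plate problem \eqref{eq:operatorPlate}, the block structure is not symmetric, so a Brezzi-type abstract theorem cannot be invoked directly; since we are in finite dimension, it suffices to verify injectivity. Mimicking the strategy of Lemma~\ref{lem:A+K} and Theorem~\ref{th:disc-well-posed}, I would test the first equation of the homogeneous system with $\psi_h=-\varphi_h$ and the second with $\zeta_h=w_h$, and add them: the $b_{2h}$ contributions cancel by the opposite signs of the off-diagonal blocks, leaving
\[ c_{1h}(\varphi_h,\varphi_h) - 2\, b_{3h}(\varphi_h,w_h) + c_{2h}(w_h,w_h) = 0. \]
Invoking the positivity \eqref{c1h:pos}, the coercivity \eqref{c2h:coer}, the bound \eqref{b3h:bound} on the cross term, and Young's inequality with an $\epsilon$ satisfying $C_{s2}/c_0<\epsilon<\rho_p/(\tau^2 C_{s4})$ (exactly as in the proof of Theorem~\ref{th:disc-well-posed}), I conclude $\varphi_h=0$ and $w_h=0$, which yields well-posedness by rank-nullity. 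Applying the same testing strategy to the inhomogeneous system, together with the continuity bounds $|G_h(\psi_h)|\lesssim\|g\|_{0,\Sigma}\|\psi_h\|_{1,\Sigma}$, $|M_h(\zeta_h)|\lesssim\tau^{-1}\|m\|_{0,\Sigma}\|\zeta_h\|_{2,\Sigma}$, and $|b_{1h}^{\Sigma,\br_h}(\psi_h)|\lesssim C_T\|\br_h\|_{1,\Omega}\|\psi_h\|_{1,\Sigma}$, followed by absorption via Young's inequality, produces the desired estimate \eqref{continuous_dec_plate}.

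The principal subtlety is in the plate sub-problem: the lack of symmetry and the non-standard block structure prevent a black-box application of classical abstract theorems, forcing reliance on the explicit testing argument. This requires that the Young parameter window $C_{s2}/c_0<\epsilon<\rho_p/(\tau^2 C_{s4})$ be non-empty, which is the same compatibility condition implicit in Lemma~\ref{lem:A+K} and Theorem~\ref{th:disc-well-posed}, and may be assumed to hold (e.g. by taking $\tau$ small enough relative to the stabilisation constants).
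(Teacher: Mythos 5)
Your proposal is correct, and for the bulk sub-problem it follows the paper's route exactly: Theorem~\ref{th:perturbed} with a discrete inf-sup condition for $b_{1h}^{\vdiv}$ that follows from the Fortin operator (the paper points to the intermediate bound \eqref{first_part_disc_inf-sup}, which with $\psi_h=0$ is precisely the inf-sup you reconstruct via \eqref{Fortin:ortho}--\eqref{Fortin:bound}). For the plate sub-problem the two proofs diverge in packaging, though not in substance. The paper splits the $2\times2$ plate operator into a coercive block-diagonal part $\mathrm{diag}(-\bC_{1h},\bC_{2h})$ plus a compact off-diagonal perturbation, and then appeals to Lemma~\ref{lem:K}, Lemma~\ref{lem:A+K}, and Theorem~\ref{th:fredholm}. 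You instead note that in finite dimension injectivity alone gives invertibility, and verify injectivity directly by testing with $\psi_h=-\varphi_h$ and $\zeta_h=w_h$, arriving at the same reduced identity $c_{1h}(\varphi_h,\varphi_h)-2b_{3h}(\varphi_h,w_h)+c_{2h}(w_h,w_h)=0$ and the same Young-inequality conclusion. Your route is the more elementary of the two: it avoids re-verifying the Fredholm hypotheses for the reduced plate-only operator (where the paper's citation of lemmas stated for the full three-block system is slightly loose), at the modest cost of losing the parallelism with the continuous analysis in Section~\ref{sec:wellp}. You are also right to surface the implicit compatibility condition that the Young window $C_{s2}/c_0<\epsilon<\rho_p/(\tau^2 C_{s4})$ be non-empty, which the paper uses silently here and in Theorem~\ref{th:disc-well-posed}.
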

\begin{proof}
    The operator $\bA_h$ is coercive and bounded from \eqref{ah:bound}, and \eqref{ah:coer}. Note from \eqref{b1h:bound} that the operator $\bB_{1h}^{\vdiv}$ is bounded and satisfies the discrete inf-sup condition provided in \eqref{first_part_disc_inf-sup}. Therefore, Theorem~\ref{th:perturbed} provides the bound in \eqref{continuous_dec_bulk}. On the other hand, the stiffness matrix in \eqref{eq:operatorPlate} can be split as follows:
    \begin{align*}
        \begin{pmatrix}
        - \bC_{1h} & \bB_{2h}^* + \bB_{3h}^* \\
        \bB_{2h}\,-\bB_{3h}  & \bC_{2h}
    \end{pmatrix} 
    =
    \begin{pmatrix}
        - \bC_{1h} & \cero \\
        \cero  & \bC_{2h}
    \end{pmatrix}
    +
    \begin{pmatrix}
        \cero & \bB_{2h}^* + \bB_{3h}^* \\
        \bB_{2h}\,-\bB_{3h}  & \cero
    \end{pmatrix}.
    \end{align*}
    This observation, together with \eqref{c1h:pos}, \eqref{c2h:coer}, Lemma~\ref{lem:K}, and Lemma~\ref{lem:A+K} applied to Theorem~\ref{th:fredholm} imply the bound in \eqref{continuous_dec_plate}.
\end{proof}

The previous result motivates the definition of two solution operators 
\begin{gather*}
    S_{1h}: \rR_h^{k-1}\rightarrow \bV_h^k \times \rQ_h^{k-1}, \, \vartheta_h\mapsto (S_{11h}(\vartheta_h),S_{12h}(\vartheta_h))^{\top} := (\bu_h,p_h)^{\top},\\
    S_{2h}: \bV_h^k\rightarrow \rR_h^{k-1} \times \rW_h^k, \, \br_h\mapsto (S_{21h}(\br_h),S_{22h}(\br_h))^{\top} := (\varphi_h,w_h)^{\top}. 
\end{gather*}
In particular, the fully coupled problem \eqref{dis-problem} is equivalent to solving the following fixed-point equation:
\begin{equation}\label{fixed-point-formulation}
    \text{Find } \varphi_h\in \rR_h^{k-1}, \text{ such that } \mathcal{A}_h(\varphi_h) = \varphi_h,
\end{equation}
where $\mathcal{A}_h: \rR_h^{k-1} \rightarrow \rR_h^{k-1}$ is defined as $\mathcal{A}_h(\varphi_h) := S_{21h}(S_{11h}(\varphi_h))$. Lemma~\ref{dec_well_posed} shows that $\mathcal{A}_h$ is well-defined. The following theorem provides the well-posedness of \eqref{dis-problem} via an equivalent fixed-point argument. 

\begin{theorem}\label{th:fixed-point}
    Assume that $h$ is sufficiently small. Then, the operator $\mathcal{A}_h$ has a unique fixed-point $\varphi_h\in \cS_h$ and the continuous dependence on data \eqref{eq:dis-dep} holds.
\end{theorem}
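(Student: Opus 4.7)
The plan is to reduce the fixed-point statement to the already-established well-posedness of the monolithic discrete formulation \eqref{dis-problem}. Since $\rR_h^{k-1}$ is finite-dimensional and the solution operators $S_{1h}$ and $S_{2h}$ are affine (they map input data to the solution of linear systems whose right-hand sides depend affinely on that data), $\mathcal{A}_h$ is itself affine. I would therefore avoid a contraction-mapping strategy (which would require artificial smallness assumptions on the physical parameters) and instead prove existence and uniqueness via an equivalence argument.

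First I would establish that $\varphi_h\in \rR_h^{k-1}$ is a fixed-point of $\mathcal{A}_h$ if and only if there exist $\bu_h\in\bV_h^k$, $p_h\in\rQ_h^{k-1}$, $w_h\in\rW_h^k$ such that $\vec{\bx}_h=(\bu_h,(p_h,\varphi_h),w_h)^{\top}$ solves \eqref{dis-problem}. The forward direction: given a solution $\vec{\bx}_h$ of \eqref{dis-problem}, the pair $(\bu_h,p_h)^{\top}$ solves \eqref{eq:operatorBulk} with coupling datum $\vartheta_h=\varphi_h$, so $(\bu_h,p_h)^{\top}=S_{1h}(\varphi_h)$ by the uniqueness part of Lemma~\ref{dec_well_posed}; analogously $(\varphi_h,w_h)^{\top}=S_{2h}(\bu_h)$, whence $\varphi_h=S_{21h}(S_{11h}(\varphi_h))=\mathcal{A}_h(\varphi_h)$. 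Conversely, given a fixed-point $\varphi_h$, define $(\bu_h,p_h)^{\top}:=S_{1h}(\varphi_h)$ and $(\hat\varphi_h,w_h)^{\top}:=S_{2h}(\bu_h)$; the fixed-point property yields $\hat\varphi_h=\varphi_h$, and assembling \eqref{eq:operatorBulk}--\eqref{eq:operatorPlate} recovers exactly \eqref{dis-problem}.

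With the equivalence in hand, the existence and uniqueness of the fixed-point follow immediately from Theorem~\ref{th:disc-well-posed}, whose hypotheses are met precisely under the small $h$ assumption (needed to invoke the discrete inf-sup condition of Theorem~\ref{dis-inf-sup}). The continuous-dependence bound \eqref{eq:dis-dep} is inherited verbatim from Theorem~\ref{th:disc-well-posed} applied to the triple $(\bu_h,(p_h,\varphi_h),w_h)^{\top}$ reconstructed from the fixed-point.

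I do not anticipate a serious technical obstacle: the maps $S_{1h}$, $S_{2h}$ are affine and well-defined by Lemma~\ref{dec_well_posed}, and both directions of the equivalence are purely algebraic. The only subtlety worth stressing is the implicit use of uniqueness in Lemma~\ref{dec_well_posed} to identify the components of any monolithic solution with the outputs of $S_{1h}$ and $S_{2h}$. Should a quantitative convergence rate for the Picard iteration used in practice be desired, a separate argument bounding the spectral radius of the linear part of $\mathcal{A}_h$ would be required; this however is not needed for the present statement, which only asserts existence, uniqueness, and stability.
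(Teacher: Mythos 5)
Your proposal is correct and takes a genuinely shorter route than the paper. The paper's proof invokes the Brouwer fixed-point theorem for existence: it introduces a convex compact set $\cS_h$, verifies that $\cA_h$ maps $\cS_h$ into itself, and proves Lipschitz continuity of $\cA_h$ by subtracting two instances of \eqref{eq:operatorBulk}--\eqref{eq:operatorPlate} and applying Lemma~\ref{dec_well_posed}; it then appeals to the equivalence with the monolithic problem \eqref{dis-problem} only to obtain uniqueness and the data bound. You observe, correctly, that the equivalence alone already delivers existence as well — once one knows that $\varphi_h$ is a fixed-point of $\cA_h$ if and only if it is the $\rR_h^{k-1}$-component of a solution of \eqref{dis-problem}, and that \eqref{dis-problem} is well-posed by Theorem~\ref{th:disc-well-posed}, the Brouwer machinery is superfluous. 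Your explicit two-way verification of the equivalence (using the uniqueness from Lemma~\ref{dec_well_posed} to identify the components of any monolithic solution with $S_{1h}(\varphi_h)$ and $S_{2h}(\bu_h)$) is the piece the paper uses implicitly, and it is sound: the operator $\bB_{1h}$ splits as $\bB_{1h}^{\vdiv}+\bB_{1h}^{\Sigma}$ with disjoint test-space blocks, so equations \eqref{eq:operatorBulk}--\eqref{eq:operatorPlate} recombine exactly into \eqref{dis-problem}. Your remark that $\cA_h$ is affine is also valuable: it flags that Brouwer gives no information about convergence of the Picard iteration used in practice (a spectral-radius condition on the linear part is what is actually needed there), so the paper's extra work is not buying the iterative guarantee either. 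The one cosmetic gap in your write-up relative to the paper's statement is that the theorem asserts the fixed-point lies in the specific ball $\cS_h$; that follows from the a priori bound in Lemma~\ref{dec_well_posed} applied once the fixed-point is in hand, so it can be added in one line.
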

\begin{proof}
    To prove existence of a fixed point, we invoke the Brouwer fixed-point theorem. Let $ \cS_h:= \{\vartheta_h \in \rR_h^{k-1} : \|\vartheta_h\|_{1,\Sigma} \leq \hat{C}_2 ( \|g\|_{0,\Sigma} + \|m\|_{0,\Sigma} + {C_T}\|\bu_h\|_{1,\Omega})\}$ (where $\bu_h$ satisfies \eqref{continuous_dec_plate}). It is easy to see that $\cS_h$ is convex and compact  set (closed and bounded ball in $\rR_h^{k-1}$), and $\cA_h(\cS_h)\subseteq \cS_h$.  Indeed, for any $\varphi_h\in \cS_h$,
\begin{align*}
    \|\mathcal{A}_h(\varphi_h)\|_{1,\Sigma} &\leq \|S_{21h}(S_{11h}(\varphi_h))\|_{1,\Sigma} + \|S_{22h}(S_{11h}(\varphi_h))\|_{2,\Omega}\leq  \hat{C}_2 \{ \|g\|_{0,\Sigma} + \|m\|_{0,\Sigma} + {C_T}\|\bu_h\|_{1,\Omega}\}.
\end{align*} 
It remains to show that $\mathcal{A}_h$ is Lipschitz continuous, and hence continuous. To do this, given $(\bu_{1h},p_{1h})^{\top},(\bu_{2h},p_{2h})^{\top}\in \bV_h^k\times \rQ_h^{k-1}$ solutions to \eqref{eq:operatorBulk} with respect to $\vartheta_{1h},\vartheta_{2h}\in \rR_h^{k-1}$ and $(\varphi_{1h},w_{1h})^{\top},(\varphi_{2h},w_{2h})^{\top}\in \rR_h^{k-1}\times \rW_h^k$ solutions to \eqref{eq:operatorPlate} with respect to $\br_{1h},\br_{2h}\in \bV_h^k$. Thus, we obtain the following sub-systems from the subtraction of these problems:
    \begin{align*}
    \begin{pmatrix}
        \bA_h & (\bB_{1h}^{\vdiv})^* \\
        \bB_{1h}^{\vdiv} & \cero
    \end{pmatrix}
    \begin{pmatrix}
        \bu_{1h}-\bu_{2h} \\
        p_{1h}-p_{2h} 
    \end{pmatrix} 
    &= 
    \begin{pmatrix}
        - (\bB_{1h}^{\Sigma,(\vartheta_{1h}-\vartheta_{2h})})^* \\ 
        \cero 
    \end{pmatrix} 
    \quad &&\text{in} \quad (\bV_h^k \times \rQ_h^{k-1})',\\
    \begin{pmatrix}
        - \bC_{1h} & \bB_{2h}^* + \bB_{3h}^* \\
        \bB_{2h}\,-\bB_{3h}  & \bC_{2h}
    \end{pmatrix}
    \begin{pmatrix}
    \varphi_{1h}-\varphi_{2h} \\ 
    w_{1h}-w_{2h}
    \end{pmatrix} 
    &= 
    \begin{pmatrix}
    - \bB_{1h}^{\Sigma,(\br_{1h}-\br_{2h})}\\ 
    \cero    
    \end{pmatrix} 
    \quad &&\text{in} \quad (\rR_h^{k-1}\times \rW_h^k)'. 
    \end{align*}
    Lemma~\ref{dec_well_posed} provides that
    \begin{align*}
        \|\cA_h(\vartheta_{1h}-\vartheta_{2h})\|_{1,\Sigma} &= \|S_{21h}(S_{11h}(\vartheta_{1h}-\vartheta_{2h}))\|_{1,\Sigma}\leq \hat{C}_2{C_T} \|S_{11h}(\vartheta_{1h}-\vartheta_{2h})\|_{1,\Omega} \leq \hat{C}_1 \hat{C}_2 {C_T^2} \|\vartheta_{1h}-\vartheta_{2h}\|_{1,\Sigma}.
    \end{align*}
Hence, the Brouwer fixed-point theorem implies that there exists a fixed-point solution to \eqref{fixed-point-formulation} in the set $\cS_h$. The uniqueness of this solution and continuous dependence on data follows from the equivalence between the fixed-point formulation \eqref{fixed-point-formulation} and \eqref{dis-problem}, together with Lemma~\ref{th:disc-well-posed} and the small $h$ assumption.
\end{proof}

\section{Computational results}\label{sec:results}
This section presents numerical results illustrating the properties of the proposed discrete scheme (cf. Section~\ref{sec:vem}). We show the optimal behaviour of the method under different polyhedral meshes. Finally, we simulate a simple application-oriented problem.  

The total computable error is defined as usual in the \ac{vem} framework using the local polynomial approximation of the discrete solutions as follows
\begin{align*}
    \bar{\mathrm{e}}_{\vec{\bx}^*} &:= \bar{\text{e}}_{\bu^*}+\bar{\text{e}}_{p^*}+\bar{\text{e}}_{w^*}+\bar{\text{e}}_{\varphi^*}:= \|\bu-\bPi_{k}^{\nabla,K} \bu_h\|_{1,\Omega} + \|p -  p_h\|_{0,\Omega} + \|\varphi - \Pi_{k-1}^{0,F} \varphi\|_{1,\Sigma} + \|w - \Pi_{k}^{\nabla^2,F} w\|_{2,\Sigma}.
\end{align*}
In addition, the experimental order of convergence $r(\cdot)$ applied to either error $\bar{\textnormal{e}}$ of the refinement $1\leq j$ are computed from the formula $r(\bar{\textnormal{e}})^{j+1} = \log\left(\bar{\textnormal{e}}^{j+1}/\bar{\textnormal{e}}^{j}\right)/\log\left(h^{j+1}/h^{j}\right)$, where the $h^j$ denotes the mesh size on either bulk or plate, depending on the context. The fixed-point algorithm is set with a tolerance of $10^{-10}$ applied to the $\ell^2$-norm of the increments, defined as the difference between the \acp{dof} at the iteration $i$ and $i-1$ of the fixed-point algorithm. In turn, the stabilisation term $S_1^{\nabla,E}(\bu_h,\bv_h)$ follows the ``diagonal recipe" introduced in \cite{dassi2017stab}, while for the remaining terms we simply use the well-known \texttt{DOFI-DOFI} stabilisation.

\subsection{Example 1 (convergence rates under uniform mesh refinement)}
We consider the four different discretisations as depicted in Figure~\ref{fig:meshes} of the domain $\Omega = (0,1)^3$ with the sub-boundaries defined by the sets $\Gamma_{\bu} = \left\{ (x_1,x_2,x_3)\in \partial \Omega : x_3\leq 1/2\right\}$, $\Gamma_{\bsigma} = \left\{ (x_1,x_2,x_3)\in \partial \Omega : 1/2 < x_3 < 1\right\}$, and $\Sigma = \left\{ (x_1,x_2,x_3)\in \partial \Omega : x_3 = 1\right\}$. Note that, $\bn_\Sigma = (0,0,1)$. We set unity model parameters  and define the manufactured solutions by
\begin{gather*}
  \bu(x_1,x_2,x_3) = \left( \cos(x_3)\sin(x_2), \cos(x_1)\sin(x_3), \cos(x_2)\sin(x_1)\right), \quad p(x_1,x_2,x_3) = \sin(2\pi x_1)\sin(2\pi x_2),\\
  w(x_1,x_2,x_3) = \bu(x_1,x_2,x_3)\cdot\bn_\Sigma, \quad \varphi(x_1,x_2,x_3) = -\bsigma(x_1,x_2,x_3)\bn_\Sigma\cdot \bn_\Sigma.
\end{gather*}
Note that the right-hand sides $\mathbf{f},g,m$ are sufficiently smooth, as they are derived from the prescribed manufactured solutions. Moreover, the non-homogeneous boundary conditions require a minor adjustment to the linear functionals. Such a modification does not affect the analysis presented in this paper.
\begin{figure}[!t]
    \centering
    \subfigure[Cube. \label{fig:cube}]{\includegraphics[width=0.244\textwidth,trim={7.cm 1.85cm 8.05cm 2.15cm},clip]{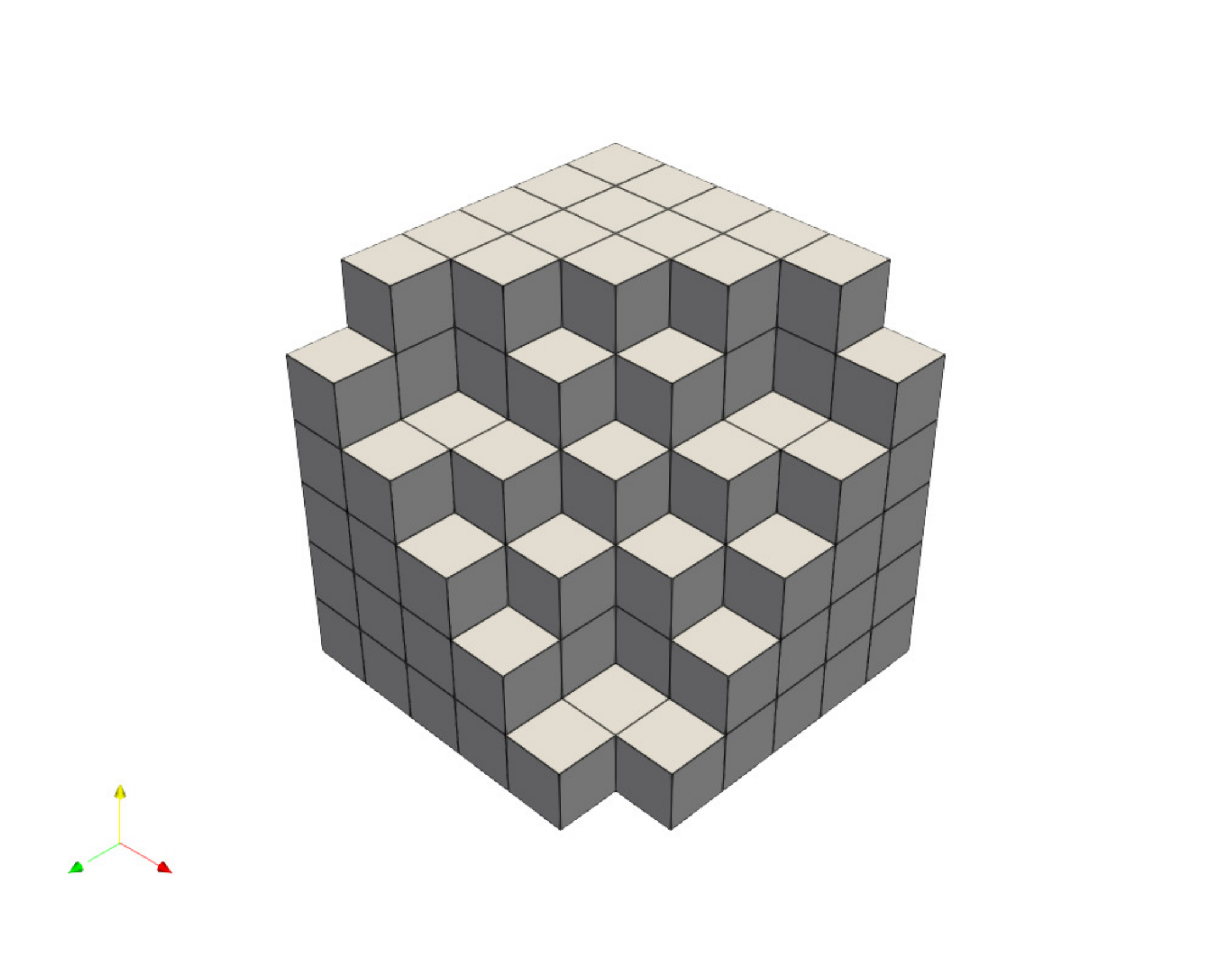}}  
    \subfigure[Octa. \label{fig:octa}]{\includegraphics[width=0.244\textwidth,trim={7.cm 1.85cm 8.05cm 2.15cm},clip]{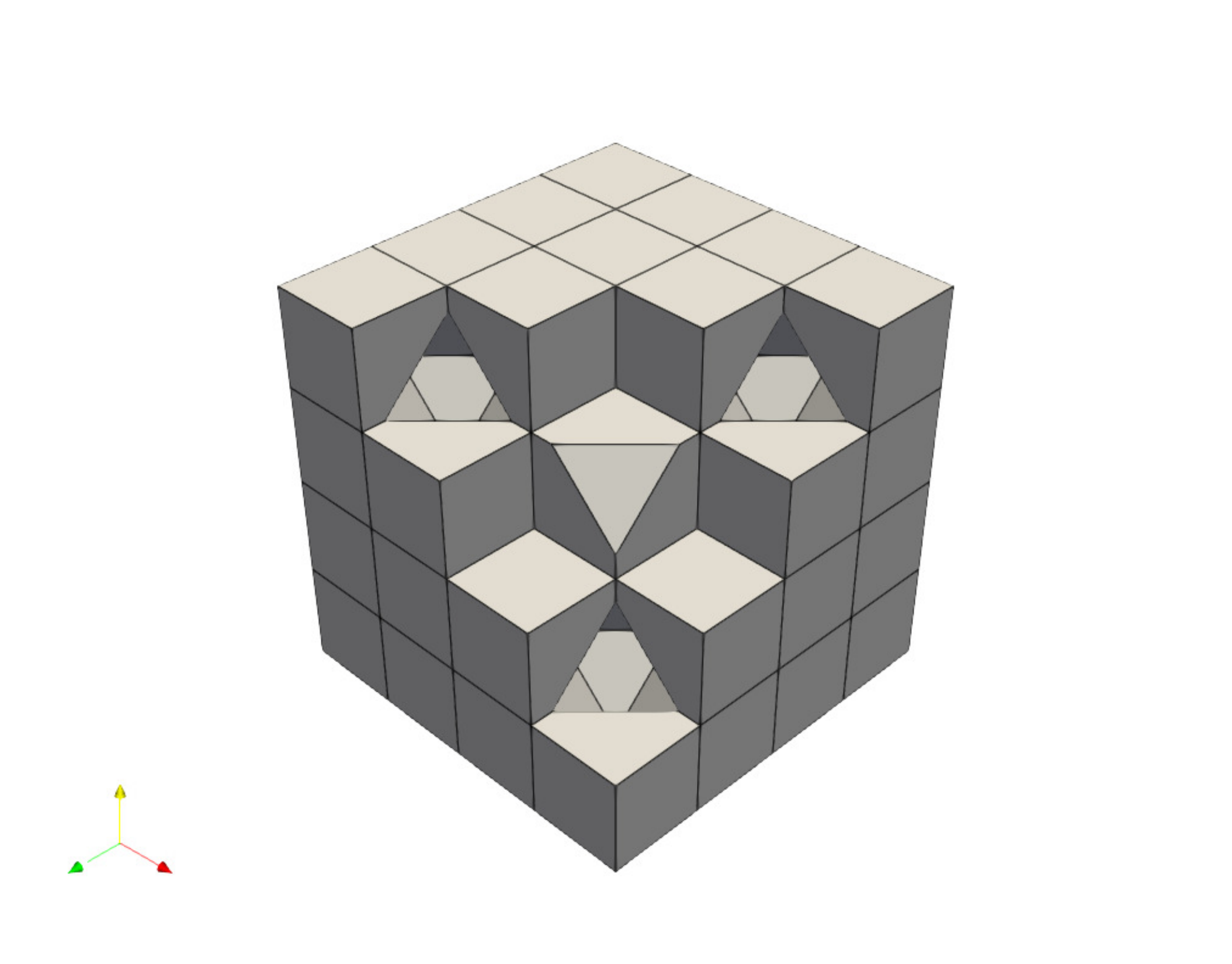}} 
    \subfigure[Voro. \label{fig:voro}]{\includegraphics[width=0.244\textwidth,trim={7.cm 1.85cm 8.05cm 2.15cm},clip]{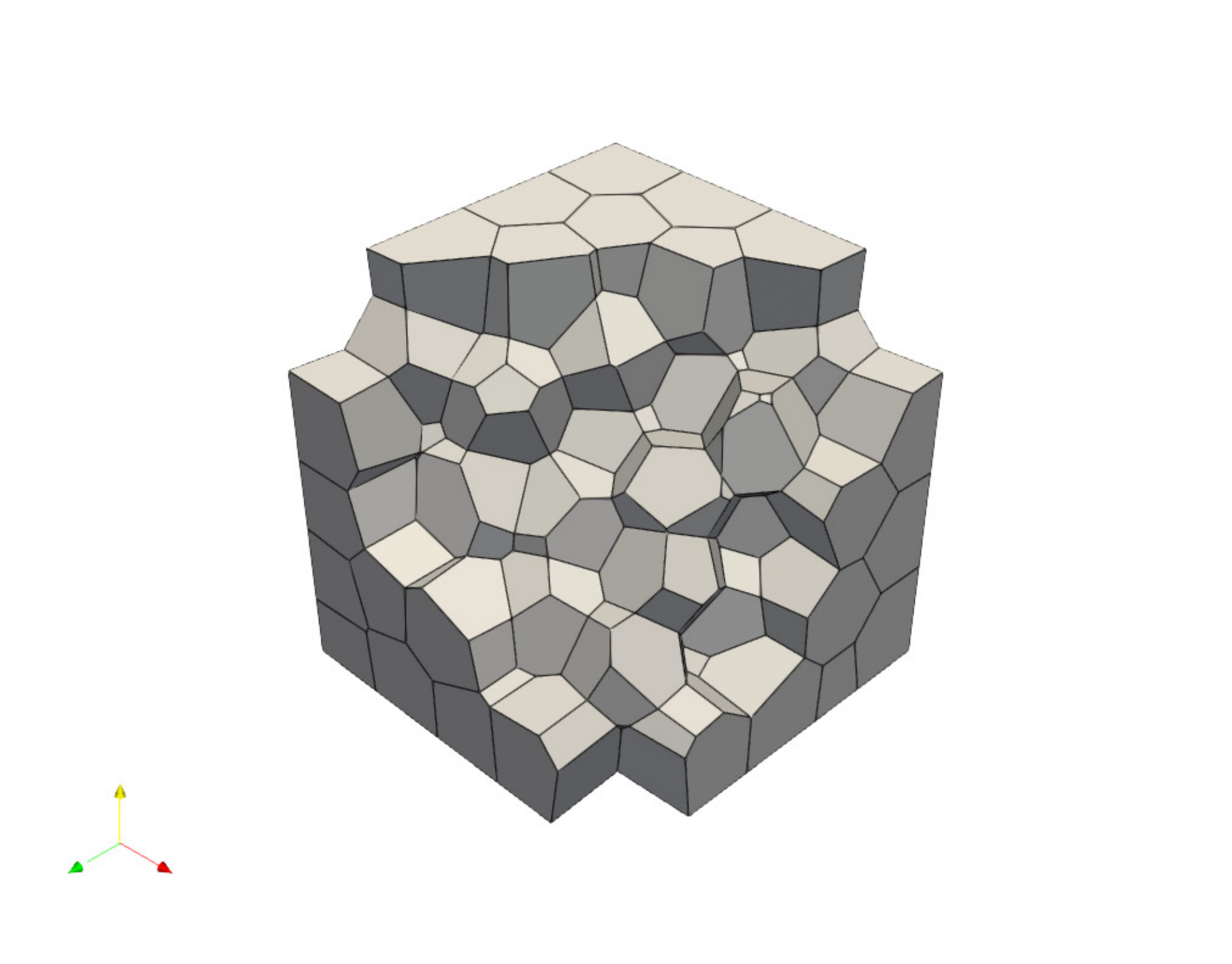}}
    \subfigure[Nine. \label{fig:nove}]{\includegraphics[width=0.244\textwidth,trim={7.cm 1.85cm 8.05cm 2.15cm},clip]{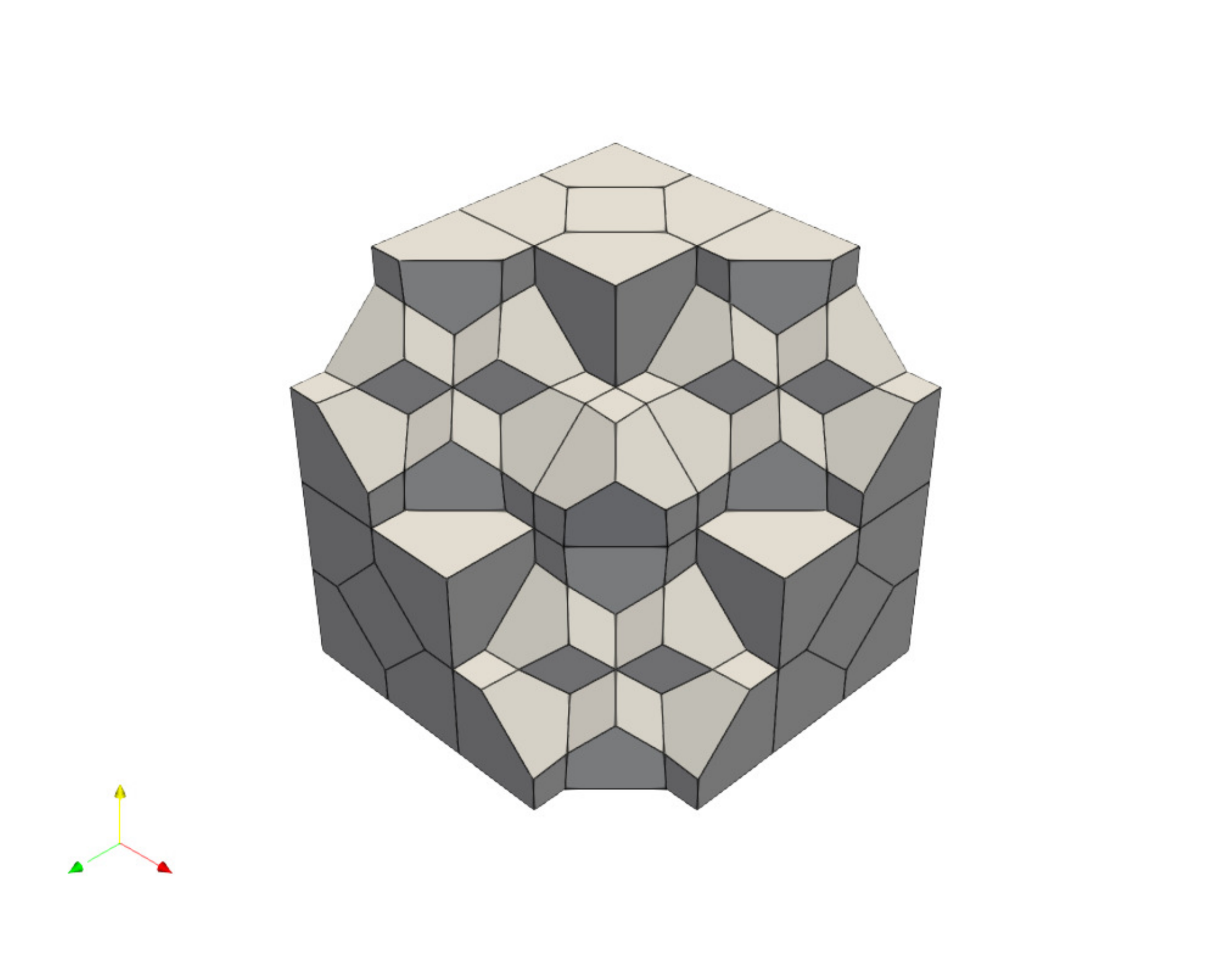}}
    \caption{Example 1. Cross-section of a variety of meshes used in the accuracy verification  test.}\label{fig:meshes}
\end{figure}
\begin{figure}[!t]
    \centering
    \subfigure[Streamlines of $\bPi_{k}^{\nabla}\bu_h$.]{\includegraphics[width=0.49\textwidth,trim={5.8cm 0.cm 2.cm 2.cm},clip]{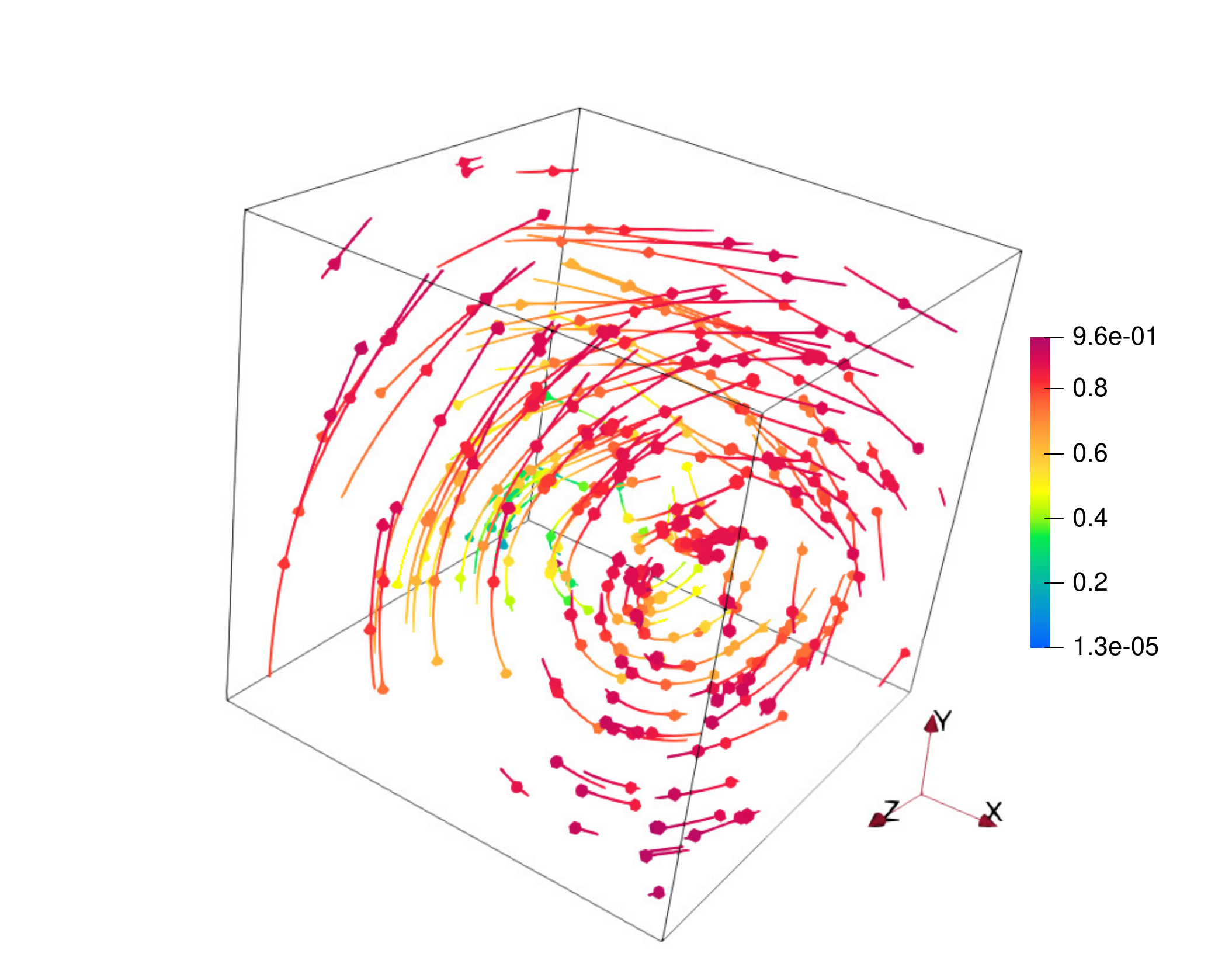}}  
    \subfigure[ $p_h$.]{\includegraphics[width=0.49\textwidth,trim={5.8cm 0.cm 2.cm 2.cm},clip]{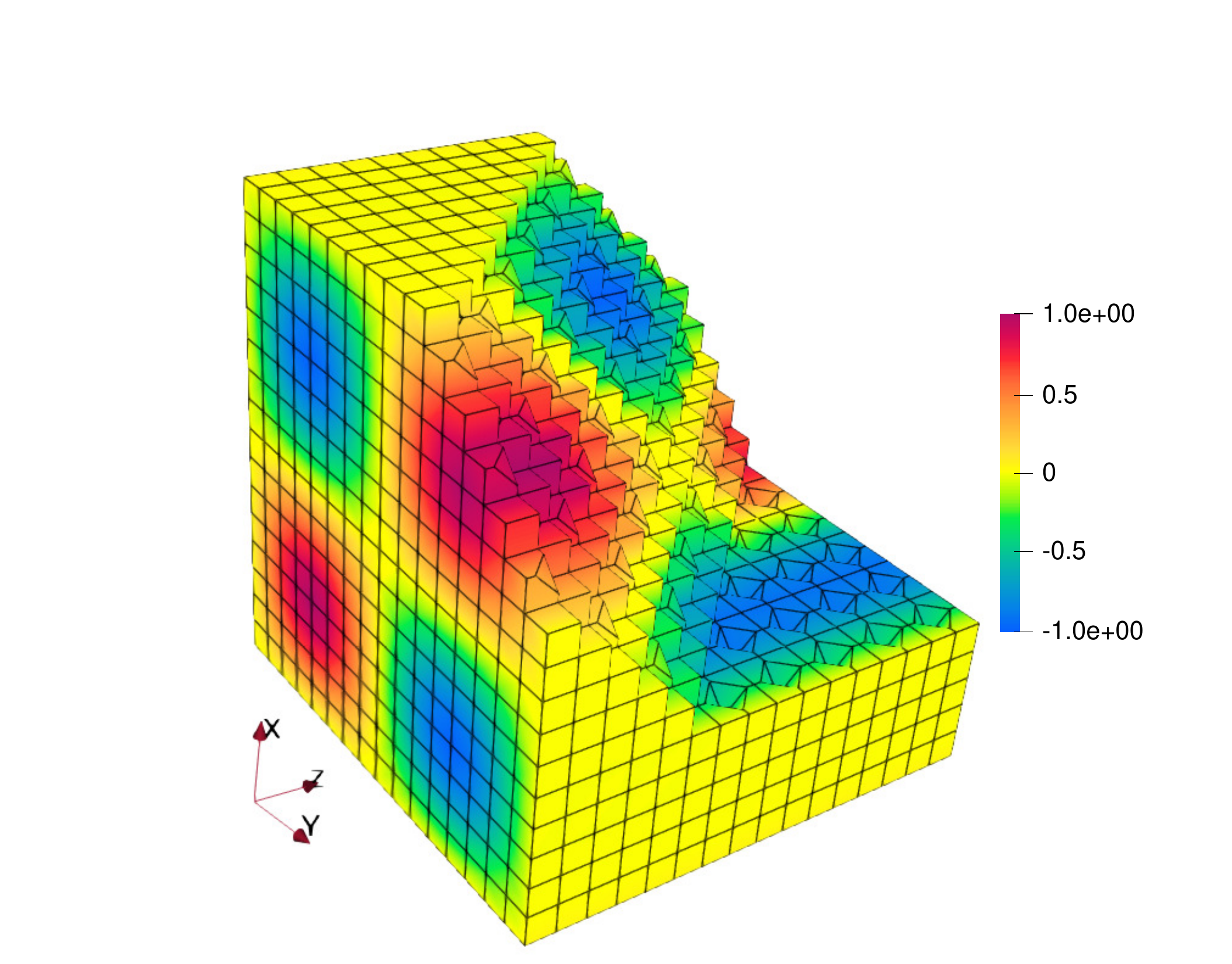}} 
    \subfigure[$\Pi_{k}^{\nabla^2}w_h$.]{\includegraphics[width=0.49\textwidth,trim={6.5cm 3.cm 1.5cm 3.5cm},clip]{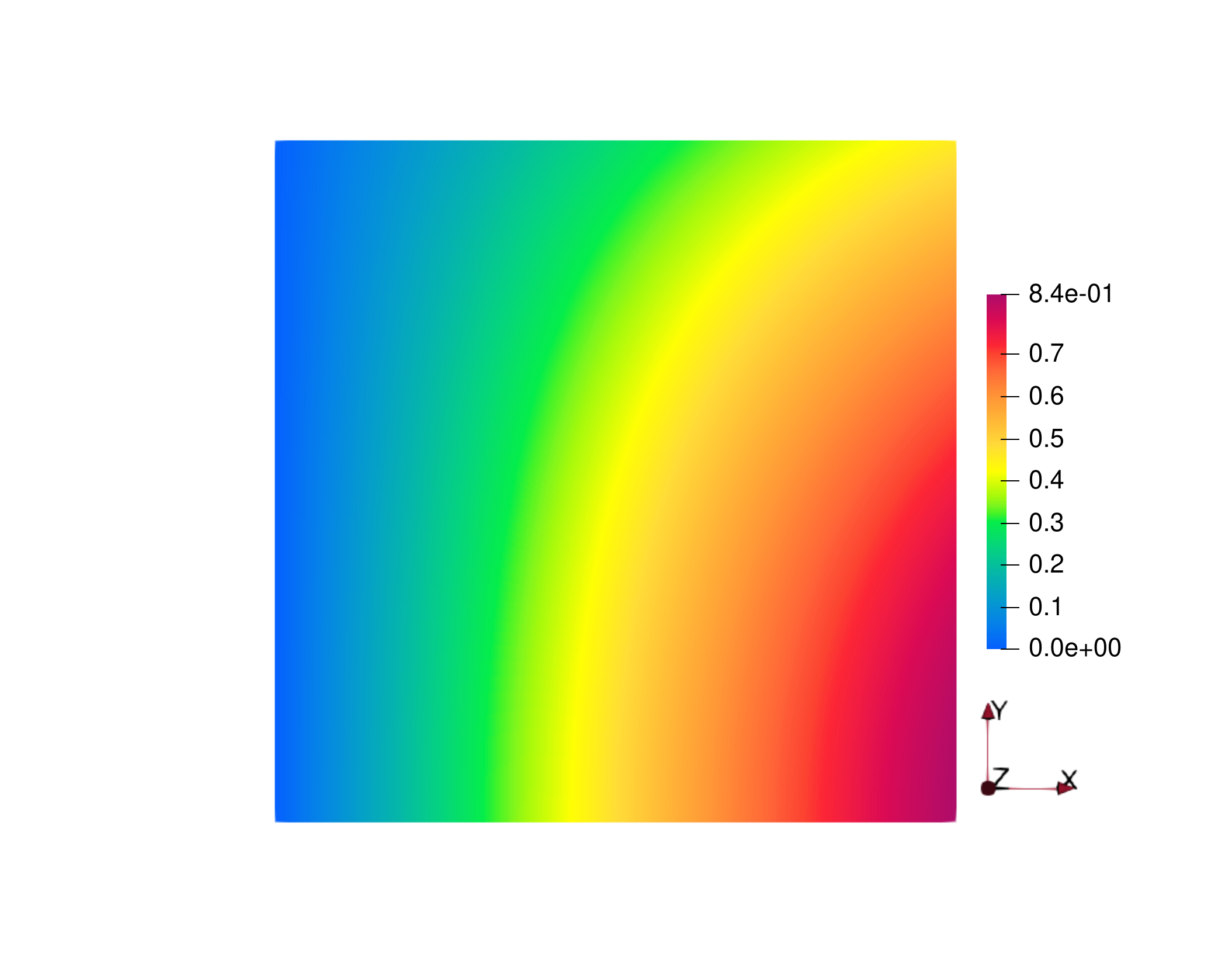}}
    \subfigure[$\Pi_{k-1}^{\nabla}\varphi_h$.]{\includegraphics[width=0.49\textwidth,trim={6.5cm 3.cm 1.5cm 3.5cm},clip]{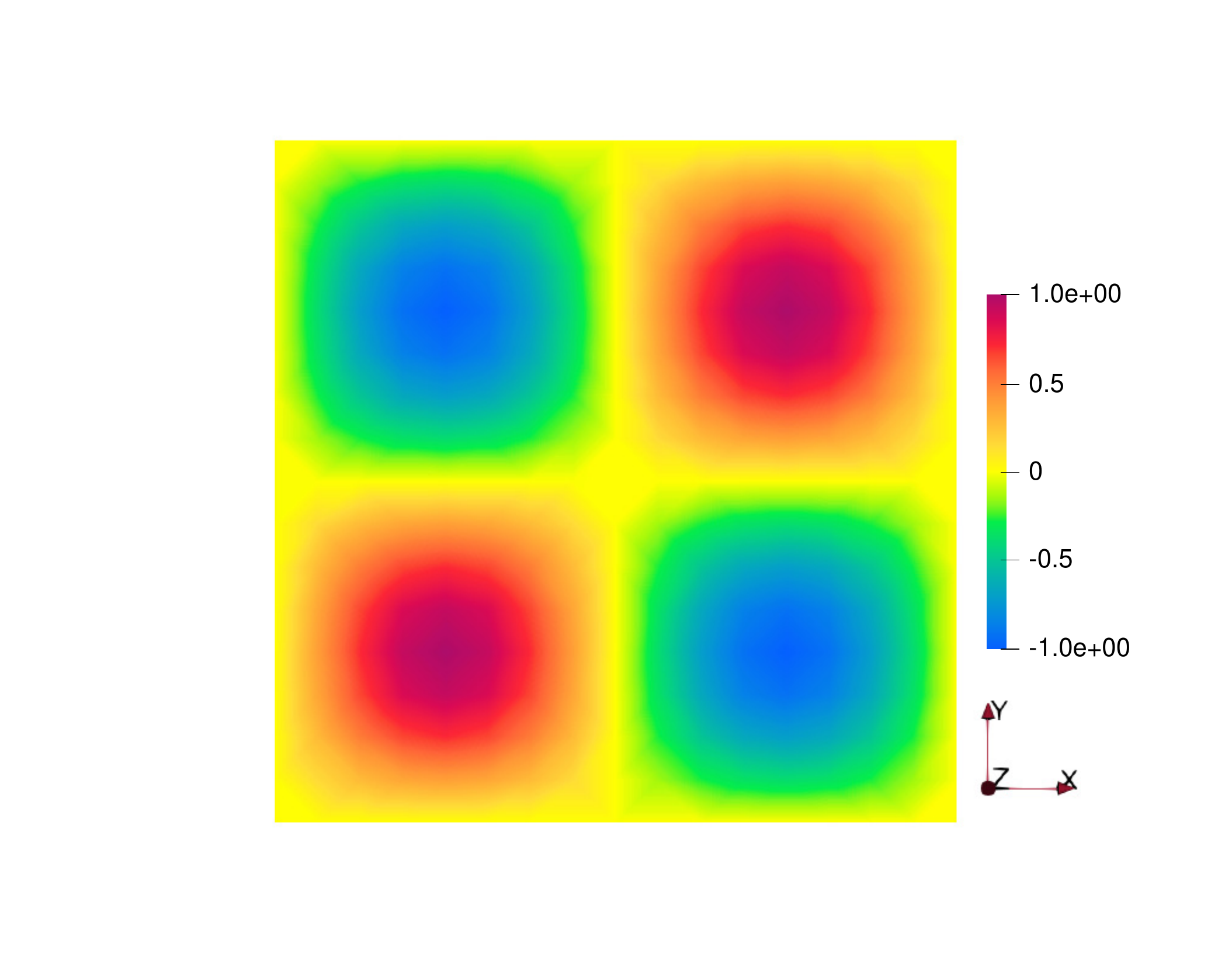}}
    \caption{Example 1. Snapshots of the variables of interest for the Octa mesh in the last refinement step.}\label{fig:manufacturedSols}
\end{figure}

\begin{table}[t!]
\setlength{\tabcolsep}{2pt}
\begin{center}
\begin{tabular}{| c | c | c | c | c | c | c | c | c | c | c | c | c | c |}
\hline
{$\cT_h$} & {$h_{\text{Bulk}}$} & {$h_{\text{Plate}}$} & {$\bar{\mathrm{e}}_{\vec{\bx}^*}$} & {$r(\bar{\mathrm{e}}_{\vec{\bx}^*})$} & {$\bar{\mathrm{e}}_{\bu^*}$} & {$r(\bar{\mathrm{e}}_{\bu^*}$\!)} & {$\bar{\mathrm{e}}_{p^*}$} & {$r(\bar{\mathrm{e}}_{p^*})$} & {$\bar{\mathrm{e}}_{w^*}$} & {$r(\bar{\mathrm{e}}_{w^*})$} & {$\bar{\mathrm{e}}_{\varphi^*}$} & {$r(\bar{\mathrm{e}}_{\varphi^*})$} & {it} \\
\hline 
\hline
\multirow{4}{*}{\rotatebox{90}{Cube}} 
& 8.66e-01 & 5.00e-01 & 1.19e-00 & $\star$    & 6.37e-02 & $\star$    & 6.34e-01 & $\star$   & 2.53e-01 & $\star$    & 9.76e-01 & $\star$   & 4\\
& 4.33e-01 & 2.50e-01 & 6.67e-01 & 0.84 & 3.95e-02 & 0.69 & 2.28e-01 & 1.47 & 1.97e-01 & 0.36 & 5.93e-01 & 0.72 & 4\\
& 2.89e-01 & 1.67e-01 & 4.38e-01 & 1.04 & 1.75e-02 & 2.01 & 1.07e-01 & 1.88 & 1.27e-01 & 1.08 & 4.04e-01 & 0.94 & 4\\
& 2.17e-01 & 1.25e-01 & 3.25e-01 & 1.03 & 9.64e-03 & 2.06 & 6.09e-02 & 1.94 & 8.75e-02 & 1.30 & 3.07e-01 & 0.96 & 4\\
\hline
\hline
\multirow{4}{*}{\rotatebox{90}{Octa}} 
& 8.33e-01 & 5.00e-01 & 1.19e-00 & $\star$   & 6.28e-02 & $\star$   & 6.38e-01 & $\star$    & 2.53e-01 & $\star$    & 9.76e-01 & $\star$    & 4\\
& 4.16e-01 & 2.50e-01 & 6.68e-01 & 0.84 & 3.96e-02 & 0.67 & 2.32e-01 & 1.46 & 1.97e-01 & 0.36 & 5.93e-01 & 0.72 & 4\\
& 2.08e-01 & 1.25e-01 & 3.24e-01 & 1.04 & 9.65e-03 & 2.04 & 5.73e-02 & 2.02 & 8.75e-02 & 1.17 & 3.07e-01 & 0.95 & 4\\
& 1.04e-01 & 6.25e-02 & 1.60e-01 & 1.02 & 2.34e-03 & 2.05 & 1.45e-02 & 1.98 & 3.58e-02 & 1.29 & 1.56e-01 & 0.98 & 4\\
\hline
\hline
\multirow{4}{*}{\rotatebox{90}{Voro}} 
& 5.77e-01 & 3.33e-01 & 8.39e-01 & $\star$   & 4.64e-02 & $\star$   & 3.47e-01 & $\star$   & 2.28e-01 & $\star$   & 7.28e-01 & $\star$   & 4\\
& 3.46e-01 & 2.04e-01 & 5.23e-01 & 0.93 & 2.24e-02 & 1.42 & 1.44e-01 & 1.73 & 1.52e-01 & 0.83 & 4.78e-01 & 0.86 & 5\\
& 1.73e-01 & 1.01e-01 & 2.58e-01 & 1.02 & 5.13e-03 & 2.13 & 3.57e-02 & 2.01 & 6.16e-02 & 1.28 & 2.48e-01 & 0.93 & 5\\
& 1.09e-01 & 6.41e-02 & 1.64e-01 & 0.97 & 1.99e-03 & 2.04 & 1.41e-02 & 2.00 & 4.01e-02 & 0.95 & 1.59e-01 & 0.99 & 5\\
\hline
\hline
\multirow{4}{*}{\rotatebox{90}{Nine}} 
& 7.19e-01 & 4.47e-01 & 1.17e-00 & $\star$   & 7.53e-02 & $\star$   & 5.34e-01 & $\star$   & 3.53e-01 & $\star$   & 9.76e-01 & $\star$  & 3\\
& 3.59e-01 & 2.24e-01 & 6.93e-01 & 0.75 & 4.00e-02 & 0.91 & 1.84e-01 & 1.54 & 2.26e-01 & 0.64 & 6.27e-01 & 0.64 & 4\\
& 1.80e-01 & 1.12e-01 & 2.98e-01 & 1.22 & 7.78e-03 & 2.36 & 4.22e-02 & 2.12 & 7.53e-02 & 1.59 & 2.85e-01 & 1.14 & 4\\
& 8.98e-02 & 5.59e-02 & 1.48e-01 & 1.00 & 1.89e-03 & 2.03 & 1.06e-02 & 1.98 & 3.25e-02 & 1.20 & 1.44e-01 & 0.97 & 4\\
\hline
\end{tabular}
\end{center}
\vspace{0.25cm}
\caption{Example 1. Convergence history and Picard iteration count for a variety of meshes.}
\label{tab:convergenceCombined}
\end{table}

The error history is reported in Table~\ref{tab:convergenceCombined}, where we observe an asymptotic $O(h^{k-1})$ decay of $\bar{\mathrm{e}}_{\vec{\bx}^*}$ as predicted by Corollary~\ref{th:convergence} for all the proposed meshes listed in Figure~\ref{fig:meshes}. In addition, we provided a detailed account of the computable error for the variables of interest, obtaining their corresponding optimal rates of $O(h^k)$ for $\bu$, $p$, and $O(h^{k-1})$ for $w$, $\varphi$, respectively. The last column shows the number of iterations required by the fixed-point implementation.  Snapshots of the variables of interest (projected to the respective polynomial spaces) are shown in Figure~\ref{fig:manufacturedSols} for the Octa mesh (see  Figure~\ref{fig:octa}) in the last refinement step.

\subsection{Example 2 (immune isolation using encapsulation with \ac{snm})}
The islets of Langerhans, or simply islets, are a cluster of endocrine cells within the pancreas that play a central role in regulating metabolism. Among these, $\beta$-cells are specialised in producing and secreting insulin, a hormone essential for maintaining healthy blood glucose levels in the human body. In Type 1 diabetes (T1D), the body's immune system mistakenly targets and destroys these $\beta$-cells. The body loses its ability to make insulin, causing blood sugar levels to rise uncontrollably, which has long-term consequences such as cardiovascular disease, nerve damage, and kidney damage, to mention a few.

An alternative to exogenous insulin administration is the transplantation of pancreatic islets to restore natural insulin production. However, this approach faces several challenges, including a limited supply of suitable donors and the need for lifelong immunosuppression to prevent rejection. Moreover, the autoimmune nature of T1D compromises the long-term effectiveness of the treatment. 

Islet encapsulation arises as a protective strategy that uses a semi-permeable membrane to shield healthy islets from the host's immune system. This membrane allows the exchange of glucose, insulin, nutrients, and small molecules, enabling the survival and proper function of the transplanted cells while preventing their destruction \cite{Song2016}. We focus on a simplification of the scheme presented in Figure~\ref{fig:illustration_device}. We adapt our model to this application by considering only blood flow coming from the top channel of the device (regarded as the bulk subdomain) and the coupling with an idealised 2D \ac{snm} (regarded as the poroelastic plate). Similar simulations considering the isolation chamber as a full poroelastic medium can be found in \cite{Bukac2024}. 

\begin{figure}[t!]
    \centering
    \includegraphics[width=\textwidth,trim={0.cm 0.cm 0.cm 0.cm},clip]{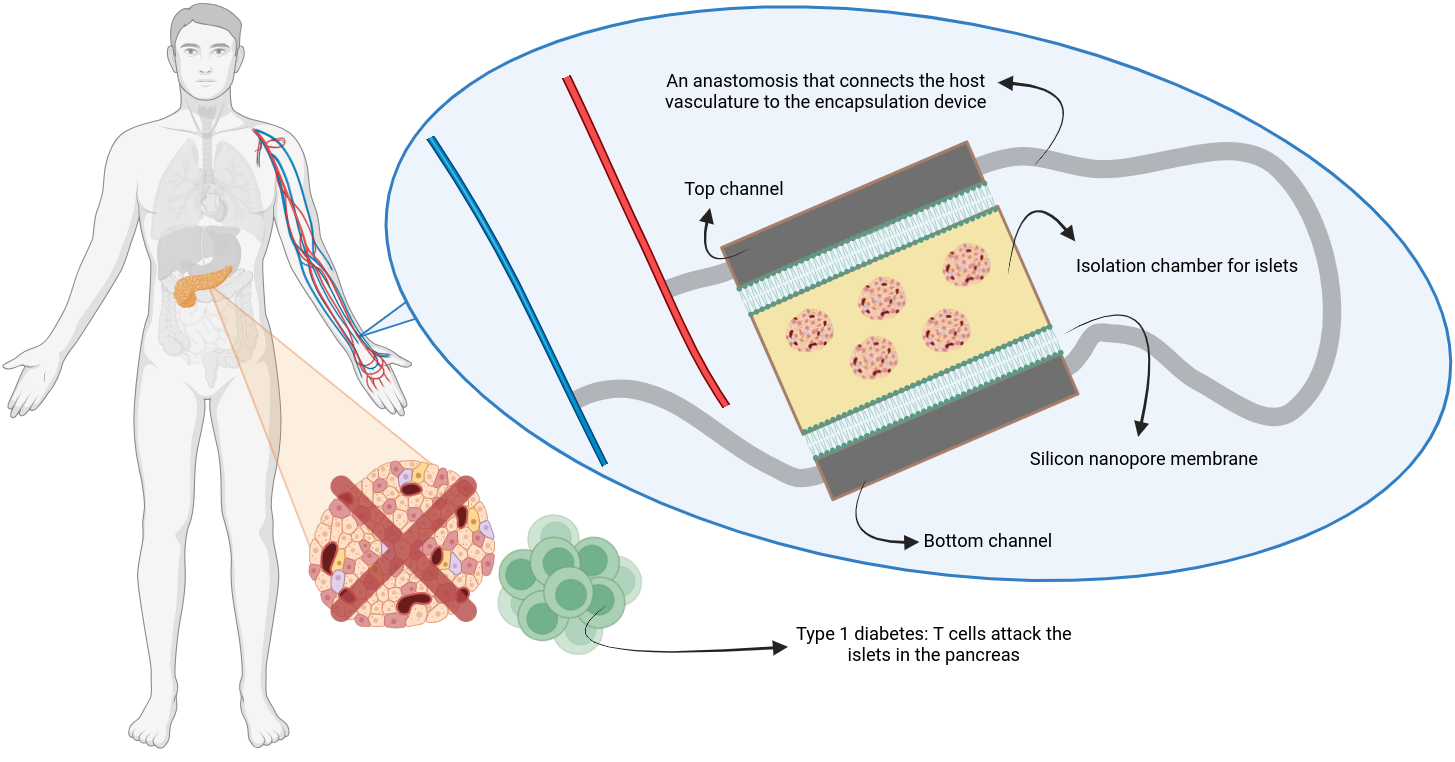}
    \caption{Example 2. Two-dimensional schematic illustration of the immune encapsulation device connected via anastomosis to the host vascular system. The isolation chamber for islets is shown in yellow, encapsulated with the \ac{snm} on both sides with arterial and venous blood in the top and bottom channels, respectively. 
    }\label{fig:illustration_device}
\end{figure}

The \ac{snm} offers a key design advantage in its ability to achieve precise control over extremely small pore sizes, enabling selective molecular transport with high accuracy. Following \cite{Song2016}, the membrane has a size of $1\,\unit{\mm}\times 3\,\unit{\mm}$ composed by $10^6$ pores of width $\num[round-mode=figures, round-precision=1]{7e-6}\, \unit{\mm}$, length $\num[round-mode=figures, round-precision=1]{3e-3}\, \unit{\mm}$, and depth $\num[round-mode=figures, round-precision=1]{3e-4}\, \unit{\mm}$. This composition allowed us to compute the porosity of the membrane as $\phi=\num[round-mode=figures, round-precision=1]{7e-3}$, which we assumed to coincide with the Biot--Willis coefficient $\alpha$. The experimental hydraulic permeability of the membrane is given as $\kappa= \num[round-mode=figures, round-precision=2]{7.5e-2}\, \unit{\mm^3\, \s^2}/\unit{\kg}$ (already scaled by the inverse of blood viscosity $\mu$ and characteristic length of the channel). The remaining coefficients are given by $\rho_p=\num[round-mode=figures, round-precision=3]{7.95e-10}\, \unit{\kg}/\unit{\mm^2}$, $D=\num[round-mode=figures, round-precision=3]{3.78e-4}\, \unit{\mm^2\, \kg}/\unit{\s^2}$, and $C_0=\num[round-mode=figures, round-precision=3]{5.77e-11}\, \unit{\mm\, \s^2}/\unit{\kg}$. On the other hand, typical values for the blood channel (with depth of $1\, \unit{\mm}$) are consider: $\mu=\num[round-mode=figures, round-precision=2]{3.5e-6}\, \unit{\kg}/\unit{\mm\, \s}$, $\rho_f=\num[round-mode=figures, round-precision=3]{1.05e-6}\, \unit{\kg}/\unit{\mm^3}$, and $\gamma=\num[round-mode=figures, round-precision=2]{1.1e-1}\, \unit{\kg}/\unit{\mm^2\, \s^{3/2}}$. The anastomosis (see Figure~\ref{fig:illustration_device}) is done in such a way that the change of pressure in the blood channel preserves the natural one from the body, given by $\Delta p = 13.79\, \unit{\kg}/\unit{\mm\, \s^2}$. 

The computational domain is presented in Figure~\ref{fig:application} (top left panel), with a discretisation given by $15\times15\times45$ cubic elements. The fluid has zero velocity on $\Gamma^{\bu}_{\cero}$ and the change of pressure is added in the model through traction conditions on $\Gamma^{\bsigma}_{\text{in}}$ and $\Gamma^{\bsigma}_{\text{out}}$. The plate is clamped and does not allow flux to escape from it ($w = \nabla_\Sigma w \cdot \bn_{\partial\Sigma} = 0$, and $\nabla \varphi \cdot \bn_{\partial\Sigma} = 0$). Note that this configuration changes the plate pressure boundary condition, which now considers a full Neumann type, necessitating the use of Lagrange multipliers in the formulation to impose uniqueness. Finally, the time step and tolerance for the fixed-point iteration are given by $\tau = \num[round-mode=figures, round-precision=1]{1.e-8}$ and $\text{tol}=\num[round-mode=figures, round-precision=1]{1.e-5}$, respectively. The experiment indicates that the parameters and the mesh should be rescaled to permit a larger time step.

Snapshots of the simulation are shown in the remaining panels of Figure~\ref{fig:application} after $9$ fixed-point iterations. We observe that the blood velocity has a maximum value of $\num[round-mode=figures, round-precision=2]{3.4e-1}\, \unit{\mm}/\unit{\s}$ matching with typical values of the human body. In addition, the blood flow follows the expected direction, driven by the change of pressure. Finally, we observe the small deflections of order $\num[round-mode=figures, round-precision=1]{9e-10}\, \unit{\mm}$ with their corresponding directions exactly in the zones where the fluid goes in and out of the membrane.

\begin{figure}[!t]
    \centering
    \raisebox{-0.3cm}{\includegraphics[width=.495\textwidth]{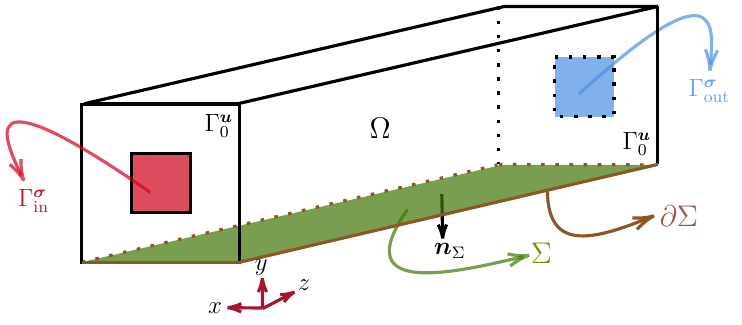}}\includegraphics[width=.495\textwidth,trim={.1cm .1cm .1cm .1cm},clip]{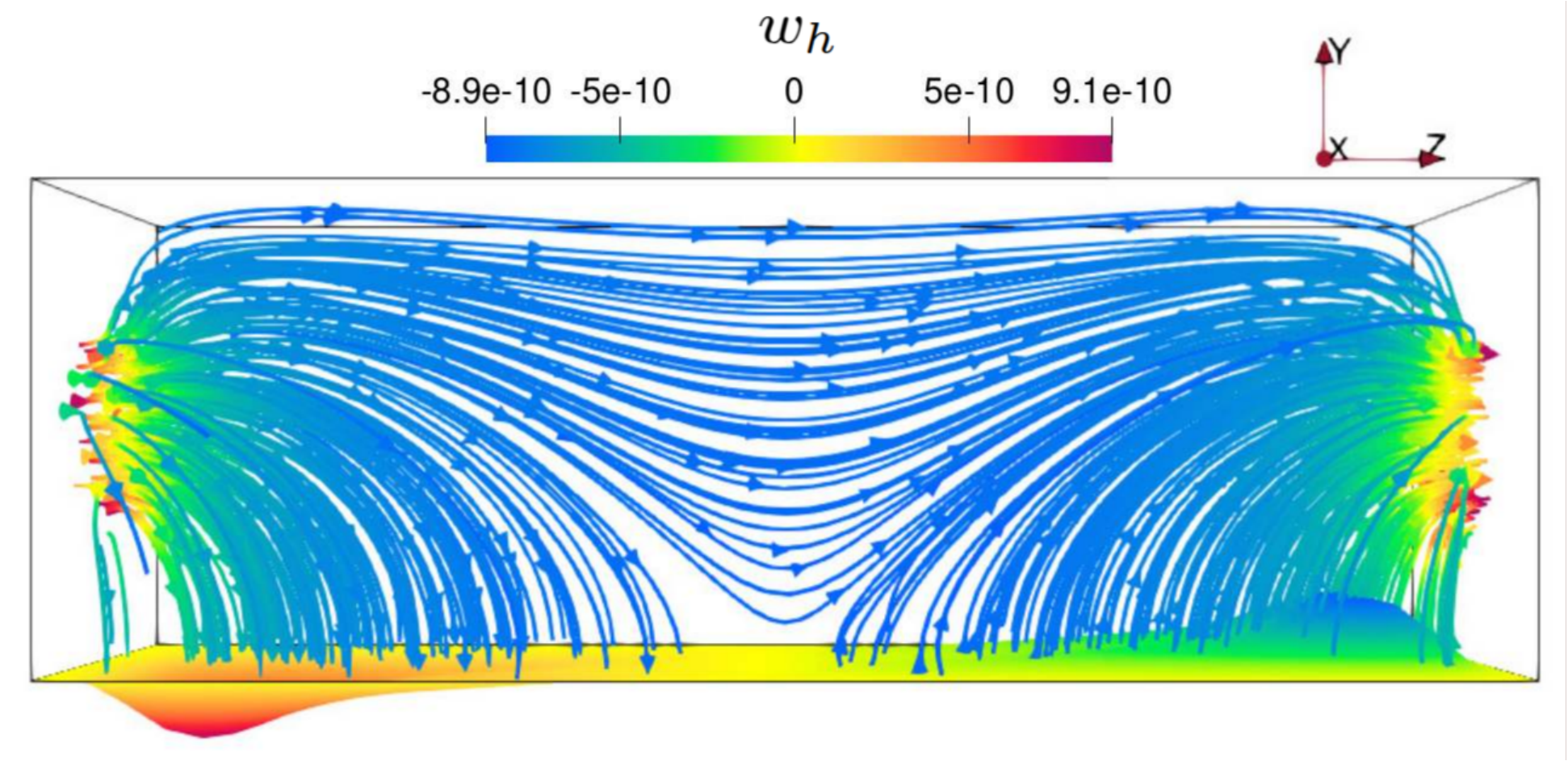}\\
\includegraphics[width=.495\textwidth]{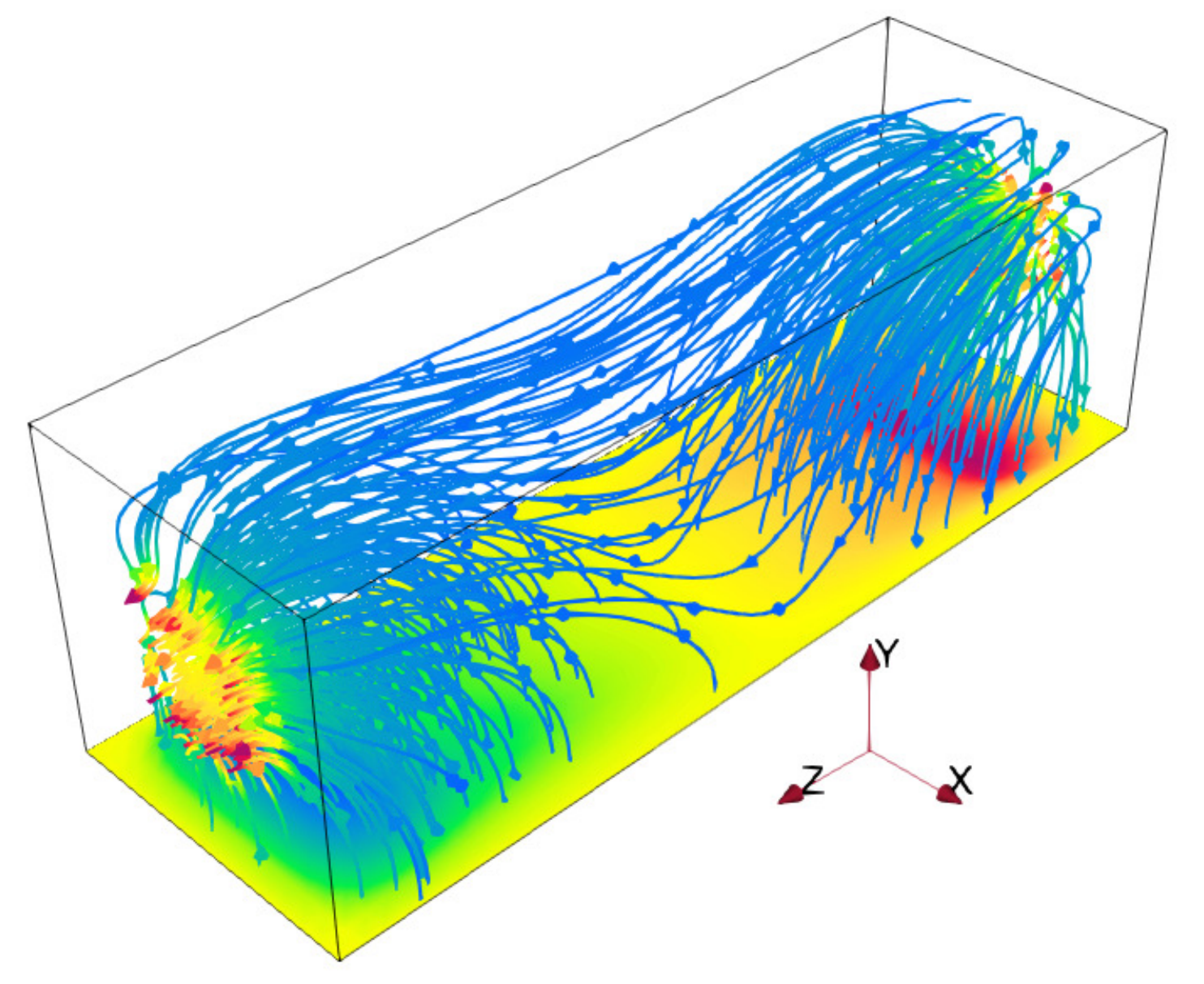}    \raisebox{0.5cm}{\includegraphics[width=.495\textwidth,trim={.1cm .1cm .1cm .1cm},clip]{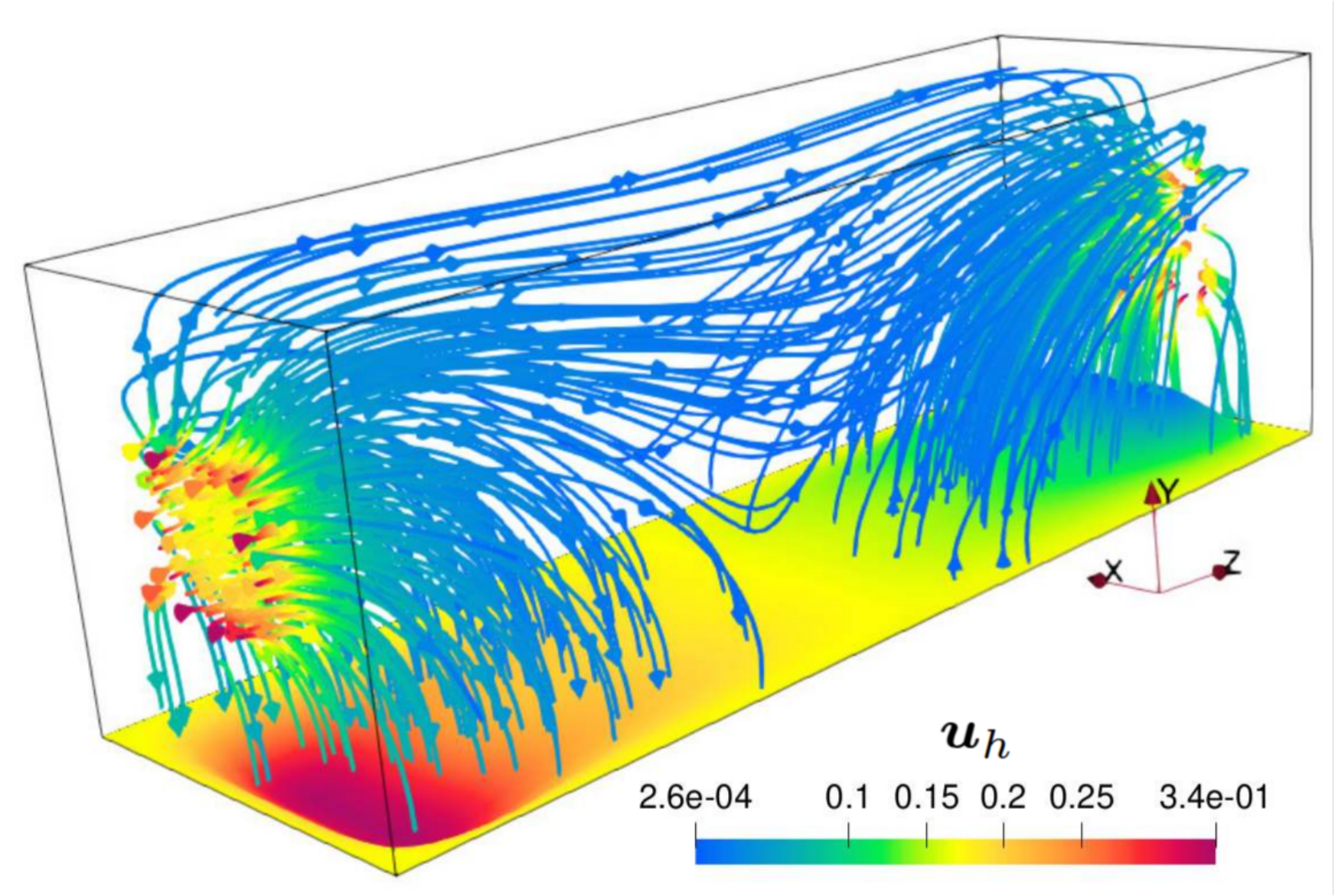}}
    \caption{Example 2. Sketch of the computational domain corresponding to the top channel $\Omega$ and silicon nanopore membrane $\Sigma$ (top left), and snapshots of the fluid velocity streamlines in the top channel, highlighted in black. The deflected plate is shown in the bottom part of the channel. The variables $\bu_h$ and $w_h$ correspond to the associated polynomial projections.}\label{fig:application}
\end{figure}

\subsection*{Acknowledgments} We kindly thank the stimulating discussions with Miguel Fern\'andez and Gabriel N. Gatica, regarding the initial stages of this work. We also thank the mathematical research institute MATRIX in Australia where part of this research was performed. 

\subsection*{Funding} FD was partially supported by the European Union (ERC Synergy, NEMESIS, project number 101115663), and he is also a member of the INdAM-GNCS group. AER and RRB were partially supported by the Australian Research Council through the \textsc{Future Fellowship} grant FT220100496.

\bibliographystyle{siam} 
\bibliography{dkrr_bib}
\end{document}